\tikzstyle{arrow} = [thick,->,>=stealth]
\tikzstyle{startstop} = [rectangle, rounded corners, 
\tikzstyle{io} = [rectangle, rounded corners, 
\tikzstyle{process} = [rectangle, 
\newcommand{\leqnomode}{\tagsleft@true\let\veqno\@@leqno}
\DeclareMathOperator{\divg}{\mathrm{div}}
\DeclareMathOperator{\diag}{\mathrm{diag}}
\newtheorem{theorem}{Theorem}
\newtheorem{lemma}[theorem]{Lemma}
\title{Non-density of nodal lines in the clamped plate problem}
\author{Alberto Enciso}
\address{Instituto de Ciencias Matem\'aticas, Consejo Superior de
  Investigaciones Cient\'\i ficas, 28049 Madrid, Spain}
\author{Josef E. Greilhuber}
\address{Stanford University, Department of Mathematics, Stanford, CA 94305, USA}
\date{\today}
\numberwithin{equation}{section}
\begin{document}

\maketitle

\begin{abstract}
    We show that, in contrast to the case of Laplace eigenfunctions, the nodal set of high energy eigenfunctions of the clamped plate problem is not necessarily dense, and can in fact exhibit macroscopic ``nodal voids''. Specifically, we show that there are small deformations of the unit disk admitting a clamped plate eigenfunction of arbitrarily high frequency that does not vanish in a disk of radius $0.44$.
\end{abstract}

\section{Introduction}

In this paper we are interested in the \emph{clamped plate problem}, a classical question in mathematical physics dating back to Lord Rayleigh~\cite[Chapter VIII]{Rayleigh1877} which models the vibration of a thin metallic plate that is fixed along its edge. Mathematically, this boils down to the analysis of the fourth-order spectral problem
\begin{align}
    \label{eq:bvp1}
    \Delta^2 u = \lambda^2 u\quad &\text{ in }D,  \\
    \label{eq:bvp2}
    u = \frac{\partial u}{\partial n} = 0 \quad &\text{ on } \partial D,
\end{align}
posed on a smoothly bounded domain $D \subset \mathbb R^2$, with~${\lambda}>0$ being the (squared) vibration frequency of the clamped plate. There is an extensive literature on various aspects of this problem, including Rayleigh's conjecture~(see \cite{Hadamard1908}) that circular plates minimize the first eigenvalue among domains with fixed area~\cite{Talenti1981,Nadirashvili1995, Ashbaugh1995}, eigenvalue bounds~\cite{PPW,ChengWei2011,ChengWei2013} and Weyl asymptotics~\cite{Pleijel1950,Agmon1965}, multiplicity bounds for the circular plate~\cite{LvovskyMangoubi2022,MangoubiRosenblatt2024}, and results on the positivity of the first eigenfunction~\cite{Duffin1953}.

A particularly appealing feature of the clamped plate problem is its close relationship to the better-known \emph{membrane problem}. This corresponds to the eigenvalue problem for the {Dirichlet Laplacian},
\begin{align}
    \label{eq:bvp3}
    -\Delta u = \lambda u \quad &\text{ in }D, \\
    \label{eq:bvp4}
    u = 0 \quad &\text{ on } \partial D,
\end{align}
which models a taut membrane vibrating at (squared) frequency ${\lambda}$. In contrast to the Laplacian, the bi-Laplacian does not satisfy a maximum principle. As a result, many standard proofs of fundamental properties of Dirichlet Laplace eigenfunctions break down, and some of them have ultimately been shown to fail. Notably, it has long been known that the first clamped plate eigenfunction may change sign~\cite{Duffin1953}, much like the Dirichlet Green's function of the bi-Laplacian on a bounded domain~\cite{Duffin1949, Garabedian, Shapiro1994}, contrary to Hadamard's conjecture in his prize memoir~\cite{Hadamard1908}.

Other than that, little is known about the nodal (that is, zero) set of eigenfunctions of the clamped plate problem. In the case of Laplace eigenfunctions, perhaps the most fundamental property of the nodal set is its density. More precisely, a Laplace eigenfunction with eigenvalue~$\lambda$ must vanish on any ball of radius $c_0\lambda^{-1/2}$ contained in~$D$, where $c_0$ is an explicit constant (namely, the first positive zero of the Bessel function $J_0$). Four different proofs of this fact can be found in the book~\cite[Section 4.2]{LMP}. 

This naturally raises the question of whether nodal sets of the clamped plate problem are also dense, in the sense that eigenfunctions necessarily vanish on balls of radius $C\lambda^{-1/2}$ for some~$C$. Our main theorem asserts that this is in general not the case: even on convex domains, high-energy clamped plate eigenfunctions can exhibit macroscopic ``nodal voids''. Specifically, we prove the considerably stronger result that there exist small deformations of the unit disk admitting a clamped plate eigenfunction of arbitrarily high frequency that does not vanish on a disk of radius 0.44. A precise statement is as follows:

\begin{theorem}
    \label{thm:main}
    There exists an increasing sequence of integers $(N_k)_{k \in \mathbb N}$ and corresponding domains $(D_{k})_{k \in \mathbb N}$ in $\mathbb R^2$, which converge to the unit disk in the smooth topology, such that $D_k$ admits a clamped plate eigenfunction $u_k$ with squared frequency $\lambda_k \geq N_k^2$ with the property that $u_k > 0$ on a disk of radius 
    $$
    r_\infty \,\exp(-4 N_k^{-2/3} - (500 + 50 \log N_k) N_k^{-1})\,,
    $$
    where $r_\infty = 0.44367\dots$ is the unique solution in $(0,1)$ to the equation
    \[
    \log(r) + \sqrt{1-r^2} - \log(1 + \sqrt{1-r^2}) - r + 1 = 0.
    \]
\end{theorem}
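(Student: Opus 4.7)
The plan is to construct an explicit quasi-mode of the bi-Laplacian eigenvalue problem on a $Z_N$-symmetric deformation of the unit disk, and to promote it to a genuine clamped plate eigenfunction via a perturbation argument. The factorization $\Delta^{2}-\mu^{4}=(\Delta-\mu^{2})(\Delta+\mu^{2})$ (with $\mu=\sqrt{\lambda}$) lets one write any solution as a sum of a modified-Helmholtz piece and a Helmholtz piece, and I would take the ansatz
\[
u(r,\theta)=I_{0}(\mu r)+\bigl[\alpha J_{N}(\mu r)+\delta I_{N}(\mu r)\bigr]\cos(N\theta)+\sum_{k\ge 2}\bigl[\beta_{k}J_{kN}(\mu r)+\gamma_{k}I_{kN}(\mu r)\bigr]\cos(kN\theta),
\]
with $\mu$ in the Airy regime $\mu=N+sN^{1/3}$. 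The radial piece $I_{0}(\mu r)\sim e^{\mu r}/\sqrt{\mu r}$ is positive and will carry the bulk of the eigenfunction; the whispering-gallery piece $J_{N}(\mu r)\cos(N\theta)$ is Debye-evanescent in the classically forbidden region $r<N/\mu$, and is used to cancel the boundary value of $I_{0}$ once the boundary is undulated with a $\cos(N\theta)$ amplitude.

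I then match the clamped conditions on $\partial D_{N}=\{r=1+\epsilon\cos(N\theta)+\sum_{k\ge 2}\rho_{k}\cos(kN\theta)\}$ by expanding $u=\partial_{\nu}u=0$ in Fourier modes $\cos(kN\theta)$. The essential coupling sits at mode $0$: the cross-term $\cos^{2}(N\theta)=\tfrac12+\tfrac12\cos(2N\theta)$ appearing in the expansion of $J_{N}(\mu(1+\epsilon\cos N\theta))\cos(N\theta)$ feeds $\alpha$ into the zeroth Fourier mode and yields $I_{0}(\mu)+\tfrac12\epsilon\alpha\mu J_{N}'(\mu)\approx 0$, so $|\alpha|\sim e^{\mu}/(\epsilon N^{5/6})$ using $|J_{N}'(\mu)|\sim N^{-2/3}$ in the Airy regime. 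The mode-$N$ Dirichlet condition fixes a relation between $\alpha,\delta,\epsilon$, while the mode-$N$ Neumann condition plays the role of the eigenvalue equation and pins down the Airy parameter $s$. For $k\ge 2$, the pair $(\beta_{k},\gamma_{k})$ and the boundary mode $\rho_{k}$ satisfy the two equations at mode $kN$, leaving one adjustable parameter per harmonic; the Debye-evanescence of both $J_{kN}$ and $I_{kN}$ throughout $D_{N}$ (since $\mu\ll kN$) makes this infinite hierarchy contractive, so a standard fixed-point iteration closes the system.

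The positivity analysis on a disk centered at the origin reduces to comparing $I_{0}(\mu r)$ with $\alpha J_{N}(\mu r)\cos(N\theta)$, since every other term carries an additional exponential smallness in $N$. With $\mu\approx N$, the Debye asymptotics give
\[
I_{0}(\mu r)\sim\frac{e^{\mu r}}{\sqrt{2\pi\mu r}},\qquad |\alpha J_{N}(\mu r)|\sim\frac{e^{\mu-N\eta(r)}}{\epsilon N^{4/3}(1-r^{2})^{1/4}},
\]
so $I_{0}>|\alpha J_{N}|$ precisely when $r+\eta(r)>1$ to leading order. Using $\eta'(r)=-\sqrt{1-r^{2}}/r$, the map $r\mapsto r+\eta(r)$ is strictly convex on $(0,1)$, blows up at $0$, equals $\log(1+\sqrt{2})<1$ at its minimum $r=1/\sqrt{2}$, and equals $1$ at $r=1$; consequently the level set $\{r+\eta(r)=1\}$ consists of $r=1$ and exactly one interior value $r=r_{\infty}$ satisfying $\eta(r_{\infty})=1-r_{\infty}$, which is precisely the quantity in the theorem statement. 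On $\{r<r_{\infty}\}$ the bulk term dominates and $u>0$.

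The main obstacle is the quantitative closure of the hierarchical matching uniformly in $N$. The small denominator $|J_{N}'(\mu)|\sim N^{-2/3}$ forces $\mu$ into a narrow Airy-scale window around $N$, and this is the origin of the leading correction $\exp(-4N^{-2/3})$: the Airy-shifted inequality $r+\eta(r)>1+sN^{-2/3}(1-r)$ constrains $r$ below $r_{\infty}$ by a multiplicative factor $1-O(N^{-2/3})$. The subleading correction $\exp(-(500+50\log N)N^{-1})$ reflects the accumulation of algebraic prefactors from the Debye expansion, the propagation of errors through the (logarithmically many relevant) higher modes of the hierarchy, and the precision required for the Airy approximation. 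Once the quasi-mode has a clamped boundary defect of size much smaller than $1$, the existence of a true eigenfunction close to it follows from a Lyapunov--Schmidt reduction (or the implicit function theorem applied to the boundary-trace map $u\mapsto(u,\partial_{\nu}u)|_{\partial D_{N}}$), yielding a genuine clamped plate eigenfunction on $D_{N}$ that differs from the quasi-mode by a correction small enough to preserve the positivity claim.
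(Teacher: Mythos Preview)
Your Debye analysis is correct and recovers the threshold $r_\infty$ for the right reason: the comparison between $I_0(\mu r)\sim e^{\mu r}$ and $J_N(\mu r)\sim e^{N\varrho(r)}$ (with $\mu\approx N$) leads to exactly the transcendental equation in the statement. The gap is in the boundary matching at modes $0$ and $N$.

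You balance the mode-$0$ Dirichlet condition by $I_0(\mu)+\tfrac12\epsilon\mu\,\alpha J_N'(\mu)\approx 0$, but this drops the equally large term $\tfrac12\epsilon\mu\,\delta I_N'(\mu)$ coming from the $I_N$ part of your ansatz. Once you include it, the mode-$0$ Dirichlet condition reads $I_0(\mu)+\tfrac12\epsilon\mu\bigl(\alpha J_N'(\mu)+\delta I_N'(\mu)\bigr)\approx 0$. But the leading-order mode-$N$ Neumann condition is precisely $\alpha J_N'(\mu)+\delta I_N'(\mu)=0$ (together with the mode-$N$ Dirichlet condition $\alpha J_N(\mu)+\delta I_N(\mu)=0$, this is what pins $\mu$ to a disk eigenvalue). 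So the cross term you rely on vanishes identically at this order, and the mode-$0$ equation degenerates to $I_0(\mu)=0$. Pushing to the next order forces $\epsilon\mu=O(1)$, and then the boundary undulation $\epsilon\cos(N\theta)$ has $C^1$-norm of order one: the domains do not converge to the disk in the smooth topology. The single-harmonic deformation simply cannot couple the mode-$N$ whispering-gallery piece back to mode~$0$ in a way that survives the clamped conditions.

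The paper's route is different in exactly this respect. It starts from the genuine mode-$N$ disk eigenfunction (so $v(0)=w(0)=0$ initially) and tracks it under a small deformation by perturbation theory. The key identity is the second variation $\ddot u|_{\partial\mathbb D}=(X\cdot n)^2\,\Delta u$; since $\Delta u|_{\partial\mathbb D}\propto\cos(N\theta)$, a nonzero $I_0$-component of $\ddot w$ requires the $\cos(N\theta)$ Fourier coefficient of $(X\cdot n)^2$ to be nonzero. A single harmonic fails this test (indeed $\cos^2(N\theta)$ has no $\cos(N\theta)$ component), so the paper takes $X\cdot n=\cos(3N\theta)+\cos(2N\theta)-\tfrac12$, whose square contains $\cos(2N\theta)\cos(3N\theta)=\tfrac12\cos(N\theta)+\cdots$. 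Two auxiliary deformation directions are spent, via the implicit function theorem, to keep $\lambda$ fixed and to force $v_t(0)\equiv 0$ (equivalently, to suppress the $J_0$ coefficient you excluded by fiat). Your ansatz is morally this construction seen after dividing through by the $I_0$-coefficient, but that renormalization hides both the second-order nature of the mechanism and the need for a multi-harmonic boundary perturbation; with only $\epsilon\cos(N\theta)$ the scheme cannot close.
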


We are able to show that the asymptotic radius $r_\infty \approx 0.44367 \ldots$ of the nodal void is optimal in the following sense.

\begin{theorem}
    \label{thm:sharpness}
    There exist $C, \varepsilon_0 > 0$ such that for every $\varepsilon \in (0,\varepsilon_0)$ and $\Phi: \mathbb D \to \mathbb R^2$ with $\|\Phi - \mathrm{id} \|_{C^{3}(\mathbb D)} \leq \varepsilon$, any eigenfunction $u$ of the clamped plate problem on $D = \Phi(\mathbb D)$ with frequency $\sqrt{\lambda} > C \frac{|\log\varepsilon|}{\varepsilon}$ changes sign on the disk of radius $(1+C\varepsilon) r_\infty$ around $0$.
\end{theorem}

The phenomena exposed by \Cref{thm:main} and \Cref{thm:sharpness} are somewhat reminiscent of Bruno and Galkowski's discovery regarding the Steklov problem, namely that for many domains in $\mathbb R^2$, there exists a fixed sub-domain on which infinitely many Steklov eigenfunctions do not vanish \cite{BrunoGalkowski2020}. In the present case, however, only one such eigenfunction is constructed per domain. It is thus natural to ask whether there are domains for which infinitely many eigenfunctions of the clamped plate problem do not have dense nodal set, and whether for any domain $D$ and any open subset $K \subseteq D$, a density-one subsequence of eigenfunctions does have nodal lines intersecting $K$, contrary to the situation encountered in the Steklov problem. The strategy developed in this paper does not seem to address these questions, at least not in an obvious way.

The main ideas of the proof of Theorem~\ref{thm:main}, as well as the core of the argument, are presented in Section~\ref{sec:proof}. Making some of these ideas precise requires a considerable number of estimates, which are deferred to later sections. With these in hand, the proof is finally completed in Section~\ref{sec:final}. Throughout, we have attempted to provide numerical bounds for the constants involved, as these bounds make precise the idea that the existence of nodal voids much larger than the characteristic length scale $\lambda_k^{-1/2}\, (\approx N_k^{-1})$ can be already observed at values of~$N_k$ that are not exceedingly large, say $N_k\approx 400$. Section~\ref{sec:sharpness} is dedicated to the proof of \Cref{thm:sharpness}, which leans on a special case of a currently unpublished lemma of Decio, Malinnikova and Nazarov, presented with proof in Appendix~\ref{sec:DMN}.

\section{Proof of Theorem 1: Main ideas}\label{sec:proof}

The gist of the proof of~\Cref{thm:main} is to perturb the unit disk very slightly and to track the behavior of a particular eigenfunction of the clamped plate problem under this perturbation. In this section we present the general framework in which we implement this idea and outline the main ingredients of the proof. Along the way, we will need several conceptually simple lemmas, e.g.\ bounds on Bessel functions and concrete derivative estimates on families of eigenfunctions obtained from perturbation theory. The proofs of these lemmas have been relegated to later sections.

\subsubsection{Initial observations}
Let $D$ be a smoothly bounded domain, and let $u$ be a solution to \eqref{eq:bvp1}-\eqref{eq:bvp2} in $D$. Let $v = \frac{1}{2\lambda}(\Delta u - \lambda u)$ and $w = \frac{1}{2\lambda}(\Delta u + \lambda u)$, so that $u = w-v$. Then $\Delta v + \lambda v = 0 = \Delta w - \lambda w$, i.e.\ $v$ solves the Helmholtz equation and $w$ solves the screened Poisson equation. Hence, we shall call $v$ the \emph{Helmholtz component} and $w$ the \emph{SP component}. The boundary condition $u = \frac{\partial u}{\partial n} u = 0$ is equivalent to $w - v = \frac{\partial w}{\partial n} - \frac{\partial v}{\partial n} = 0$. In fact, the coupled boundary value problem 
\begin{align}
    \label{eq:bvp5}
    \Delta v + \lambda v = \Delta w - \lambda w = 0 \quad &\text{ in }  D, \\
    \label{eq:bvp6}
    w - v = \frac{\partial w}{\partial n} - \frac{\partial v}{\partial n} = 0 \quad &\text{ on } \partial D,
\end{align}
is equivalent to \eqref{eq:bvp1}-\eqref{eq:bvp2}.

The Helmholtz component certainly exhibits the typical oscillatory behavior of a Laplace eigenfunction, vanishing on any ball of radius $c_0\lambda^{-1/2}$. The SP component, which is the key ingredient in our proof, does not oscillate; instead, it falls off exponentially away from the boundary. One might expect that, away from the boundary, a high-energy clamped eigenfunction~$u$ coincides with the Helmholtz component~$v$ up to a small error corresponding to~$w$, which would heuristically lead to the density of the nodal set over balls of radius~$c_0\lambda^{-1/2}$. However, as we shall see, this picture is not completely accurate.

The proof of \Cref{thm:main} begins with the observation that there exist eigenfunctions of \eqref{eq:bvp1}--\eqref{eq:bvp2} on the unit disk $\mathbb D$ for which both the Helmholtz and the SP component vanish to high order at the origin. We then engineer suitably small perturbations of $\mathbb D$ such that the Helmholtz component $v$ of the perturbed eigenfunction $u$ still vanishes to high order at the origin but where $w(0)$ is nonzero. Thus, there exists a neighborhood of the origin where $u$ is approximately equal to $w$, not to~$v$. The theorem then follows by estimating the size of the disk centered at the origin where $w$, and thus $u$, does not change sign.

In the remainder of this section, we sketch the construction of this perturbed domain and demonstrate heuristically that $u > 0$ on a disk of radius approaching a fixed radius (around $0.44$) as the frequency of the original, unperturbed eigenfunction becomes very large.

\subsubsection{Domain perturbations}
Let us consider a ``spacetime'' domain $\mathcal D \subseteq \mathbb R^2 \times (-\varepsilon,\varepsilon)$. We write $D_t = \mathcal D \cap (\mathbb R^2 \times \{t\})$ for the time slices, which we assume to be smooth and bounded. Assume there is a smooth vector field $X$, defined on a neighborhood of $\mathcal D$, possibly time-dependent, but parallel to the time slices $\{t\} \times \mathbb R^2$, such that $\partial_t + X$ is tangent to $\partial \mathcal D$. Let $u_t$ be a smooth family of solutions of \eqref{eq:bvp1}-\eqref{eq:bvp2} on $D_t$ with eigenvalue $\lambda_t$, normalized so that $\int_{D_t} u_t^2 = 1$ for all $t \in (-\varepsilon,\varepsilon)$. In the following, we suppress the subscript $t$ and write $\dot u = \frac{\partial u_t}{\partial t}, \ddot u = \frac{\partial^2 u_t}{\partial t^2}$.

\subsubsection{Shape derivative of the eigenvalue}
We first establish three simple claims, which we then use to derive a Hadamard-type formula for the derivative of the eigenvalue $\lambda^2$. In the following $n$ denotes the outward-pointing unit normal of $\partial D$.
\begin{enumerate}[label=(\arabic*)]
    \item \label{eq:orthogonality} $\int_{D} \dot u u = 0$. 
    \item \label{eq:derivativeOfZerothOrderBC} $\dot u = 0$ on $\partial D$. 
    \item \label{eq:derivativeOfFirstOrderBC} $\partial_n \dot u + (X \cdot n) \Delta u = 0$ on $\partial D$. 
    \item \label{eq:derivativeOfEigenvalue} $2 \lambda \dot \lambda = -\int_{\partial D} (X \cdot n) |\Delta u|^2$. 
\end{enumerate}

Claim \ref{eq:orthogonality} follows from $(u,u)_{L^2(D)} = 1$ and $u|_{\partial D} = 0$. Applying $\partial_t + X$ to $u$ and recalling that $u|_{\partial \mathcal D} = 0$, $\partial_n u|_{\partial \mathcal D} = 0$ and $\partial_t + X \parallel \partial \mathcal D$ yields \ref{eq:derivativeOfZerothOrderBC}. Applying $\partial_t + X$ to $\partial_n u$, we obtain $\partial_n \dot u + (X \cdot n) \partial_n^2 u = 0$ on $\partial D$. In curvilinear coordinates adapted to $\partial D$, it is standard that the Euclidean Laplacian decomposes as $\Delta = \Delta_{\partial D} + \partial_n^2 + H \partial_n$ (where $H$ denotes the mean curvature of $\partial D$), yielding \ref{eq:derivativeOfFirstOrderBC} since $u = \partial_n u = 0$ on $\partial D$.
Finally, the expressions \ref{eq:orthogonality}--\ref{eq:derivativeOfFirstOrderBC} allow us to calculate the shape derivative of the eigenvalue, starting from the observation that $\lambda^2 = \int_D |\Delta u|^2$, which follows from an integration by parts:
\begin{align*}
    \begin{split}
    2 \dot \lambda \lambda &= \int_{\partial D} (X \cdot n) |\Delta u|^2 + 2 \int_D \Delta \dot u \Delta u = \int_{\partial D} (X \cdot n) |\Delta u|^2 + 2 \int_{\partial D} \partial_n \dot u \Delta u - 2 \int_D \nabla \dot u \nabla \Delta u \\
    &= - \int_{\partial D} (X \cdot n) |\Delta u|^2 -  \int_D \nabla \dot u \nabla \Delta u = - \int_{\partial D} (X \cdot n) |\Delta u|^2 - \int_{\partial D} \dot u \partial_n \Delta u + \int_D \dot u \Delta^2 u \\
    &= - \int_{\partial D} (X \cdot n) |\Delta u|^2
    \end{split}
\end{align*}

\subsubsection{Shape derivatives of the Helmholtz and SP components}

Recall the Helmholtz and SP components of $u$, given by $v = \frac{1}{2\lambda}(\Delta u - \lambda u)$, $w = \frac{1}{2\lambda}(\Delta u + \lambda u)$.
Using Claims~\ref{eq:orthogonality}--\ref{eq:derivativeOfEigenvalue} above, it is easy to see that  their derivatives, $\dot v$ and $\dot w$, satisfy the following system of equations:
\begin{enumerate}[label=(\arabic*)]
    \setcounter{enumi}{4}
    \item \label{eq:componentVEquation} $(\Delta + \lambda) \dot v + \dot \lambda v = 0$.
    \item \label{eq:componentWEquation} $(\Delta - \lambda) \dot w - \dot \lambda w = 0$.
    \item \label{eq:componentsZerothOrder} $\dot v - \dot w = 0$ on $\partial D$.
    \item \label{eq:componentsFirstOrder} $\partial_n \dot v - \partial_n \dot w = (X \cdot n) \Delta u = 2 \lambda (X \cdot n) w$ on $\partial D$.
\end{enumerate}

Most important for our purposes is the expression for the second variation of~$u$ on the boundary. Indeed, applying $\partial_t + X$ to \ref{eq:derivativeOfZerothOrderBC} yields $\ddot u + (X \cdot n) \partial_n \dot u = 0$. By plugging in \ref{eq:derivativeOfFirstOrderBC}, we then obtain
\begin{enumerate}[label=(\arabic*)]
    \setcounter{enumi}{8}
    \item \label{eq:secondDerivative} $\ddot u = (X \cdot n)^2 \Delta u$ on $\partial D$.    
\end{enumerate}
This formula underpins the proof of \Cref{thm:main}: it shows that $w$ and $v$, which agree up to first order on $\partial D_0$, may differ in second order. We will use this fact to construct a perturbation of the unit disk on which the first few components of the Fourier-Bessel expansion of $v$ vanish, but the radial component of the expansion of $w$ into modified Bessel functions does not. This component, which does not change sign, then drives the behavior of $u$ near zero.

\subsubsection{Construction of a suitable deformation}

We begin by considering the unit disk $\mathbb D$ in $\mathbb R^2$. Each eigenspace of the clamped plate problem is an $SO(2)$-representation, via the standard action by rotation on $\mathbb D$. Fix $N > 0$. We consider the first eigenspace of the clamped plate problem with angular momentum $N$, and denote the corresponding eigenvalue by $\lambda^2$. The eigenspace is two-dimensional and spanned by
\begin{align}
    \label{eq:eigenfunctions}
    u_+(r,\theta) = \frac1{\sqrt{\pi}} \cos(N \theta)(J_N(\sqrt{\lambda})^{-1} J_N(\sqrt{\lambda} r) - I_N(\sqrt{\lambda})^{-1} I_N(\sqrt{\lambda} r)), \\
    u_-(r,\theta) = \frac1{\sqrt{\pi}} \sin(N \theta)(J_N(\sqrt{\lambda})^{-1} J_N(\sqrt{\lambda} r) - I_N(\sqrt{\lambda})^{-1} I_N(\sqrt{\lambda} r)),
\end{align}
where $J_N$ is the $N^{th}$ Bessel function and $I_N$ is the $N^{th}$ modified Bessel function. The constant $\frac{1}{\sqrt{\pi}}$ in front is chosen so that $u_+$ and $u_-$ are $L^2$-normalized:

\begin{lemma}
    \label{lem:normalization}
    The $L^2$-norm of the clamped plate eigenfunctions $u_+,u_-$ on the unit disk is~$1$.
\end{lemma}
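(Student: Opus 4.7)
The plan is to separate variables and reduce the claim to a one-dimensional Bessel-function integral identity, which is then handled by the classical Lagrange manipulations of the (modified) Bessel ODEs. Since $\int_0^{2\pi}\cos^2(N\theta)\,d\theta = \pi$, the prefactor $\frac{1}{\sqrt{\pi}}$ is exactly what is needed to convert the $L^2$ norm into a radial integral:
\[
\|u_+\|_{L^2(\mathbb D)}^2 = \int_0^1 \Bigl(\frac{J_N(\sqrt{\lambda}\,r)}{J_N(\sqrt{\lambda})} - \frac{I_N(\sqrt{\lambda}\,r)}{I_N(\sqrt{\lambda})}\Bigr)^2 r\,dr,
\]
and the problem reduces to showing this equals $1$. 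The argument for $u_-$ is identical, with $\sin$ replacing $\cos$.

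Expanding the square produces a $J$-diagonal term, an $I$-diagonal term, and a cross term. For each diagonal piece I would use the standard recipe: multiply the Bessel ODE satisfied by $y(r) = J_N(\sqrt{\lambda}\,r)$ (respectively the modified Bessel ODE satisfied by $I_N(\sqrt{\lambda}\,r)$) by $2ry'$ and integrate over $[0,1]$. Since $y(r) \sim c r^N$ near $r=0$ for $N \geq 1$, all boundary contributions at the origin vanish, and one obtains
\[
\int_0^1 J_N(\sqrt{\lambda}\,r)^2 r\,dr = \tfrac{1}{2}\bigl(J_N'(\sqrt{\lambda})^2 + (1 - N^2/\lambda)\,J_N(\sqrt{\lambda})^2\bigr),
\]
\[
\int_0^1 I_N(\sqrt{\lambda}\,r)^2 r\,dr = \tfrac{1}{2}\bigl((1 + N^2/\lambda)\,I_N(\sqrt{\lambda})^2 - I_N'(\sqrt{\lambda})^2\bigr).
\]
For the cross term I would subtract appropriate multiples of the two ODEs to derive the Lagrange identity $\bigl(r(J_N I_N' - I_N J_N')\bigr)' = 2\lambda\, r J_N I_N$ (with all arguments $\sqrt{\lambda}\,r$). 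Integrating from $0$ to $1$ gives
\[
\int_0^1 J_N(\sqrt{\lambda}\,r) I_N(\sqrt{\lambda}\,r) r\,dr = \frac{1}{2\sqrt{\lambda}}\bigl(J_N(\sqrt{\lambda}) I_N'(\sqrt{\lambda}) - I_N(\sqrt{\lambda}) J_N'(\sqrt{\lambda})\bigr).
\]

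The punch line is the eigenvalue condition coming from the second boundary condition $\partial_r u_+(1,\theta) = 0$, namely $J_N'(\sqrt{\lambda})/J_N(\sqrt{\lambda}) = I_N'(\sqrt{\lambda})/I_N(\sqrt{\lambda})$. This single identity does all the work: it annihilates the cross term outright, and after dividing the two diagonal integrals by $J_N(\sqrt{\lambda})^2$ and $I_N(\sqrt{\lambda})^2$ respectively, it causes the $J_N'^2/J_N^2$ and $I_N'^2/I_N^2$ contributions to cancel against each other. What remains is $\tfrac{1}{2}(1 - N^2/\lambda) + \tfrac{1}{2}(1 + N^2/\lambda) = 1$. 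I do not anticipate any genuine obstacle: the computation is a clean application of Lagrange/Green identities for the (modified) Bessel ODEs, and the answer $1$ is already baked into the normalization constant $\frac{1}{\sqrt{\pi}}$. The only mildly delicate point is the verification that the boundary contributions at $r = 0$ vanish, which for $N \geq 1$ follows immediately from the Taylor expansions $J_N(r), I_N(r) \sim r^N/(2^N N!)$.
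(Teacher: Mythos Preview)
Your proof is correct. The Lommel-type integrals you quote are right, the cross-term formula is right, and the eigenvalue condition $J_N'(\sqrt\lambda)/J_N(\sqrt\lambda)=I_N'(\sqrt\lambda)/I_N(\sqrt\lambda)$ indeed forces the cross term to vanish and the two diagonal quotients $(J_N'/J_N)^2$, $(I_N'/I_N)^2$ to cancel, leaving $\tfrac12(1-N^2/\lambda)+\tfrac12(1+N^2/\lambda)=1$.

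The paper, however, takes a completely different and rather slick route. It avoids Bessel integrals entirely by exploiting the shape-derivative machinery already set up in the surrounding text. One flows the disk by the scaling field $X=r\,\partial_r$; on the one hand the bi-Laplacian eigenvalue scales as $\lambda_t^2=e^{-4t}\lambda^2$, so $\frac{d}{dt}\big|_{t=0}\lambda_t^2=-4\lambda^2$; on the other hand the Hadamard formula (valid for an $L^2$-normalized eigenfunction) gives $\frac{d}{dt}\big|_{t=0}\lambda_t^2=-\int_{\partial\mathbb D}(X\cdot n)|\Delta u|^2 = -4\lambda^2 c_{N,\lambda}^2\int_{\partial\mathbb D}\cos^2(N\theta)$, since $\Delta u|_{\partial\mathbb D}=-2\lambda c_{N,\lambda}\cos(N\theta)$ and $X\cdot n=1$. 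Equating the two forces $c_{N,\lambda}^2=1/\pi$. Your approach is more elementary and entirely self-contained; the paper's approach is shorter in context because it recycles the variational formula that is needed anyway for the rest of the argument, and it sidesteps all Bessel-function identities.
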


\begin{proof}
Let us show that if $u(r,\theta) = c_{N,\lambda} \cos(N \theta)(J_N(\sqrt{\lambda})^{-1} J_N(\sqrt{\lambda} r) - I_N(\sqrt{\lambda})^{-1} I_N(\sqrt{\lambda} r))$ is $L^2$-normalized on the unit disk, then $c_{N,\lambda} = \pm \frac{1}{\sqrt{\pi}}$. The same argument applies if one replaces the cosine by a sine.

    This is easiest to establish by considering the behaviour of the eigenvalue $\lambda$ along a deformation induced by the scaling vector field $r \,\partial_r$. The image of the unit disk along the time-$t$ flow generated by $r\,\partial_r$ is a disk of radius $e^{t}$. The scaling behavior of $\Delta^2$ implies $\lambda_t^2 = e^{-4t} \lambda^2$. Comparing this with the expression obtained in \ref{eq:derivativeOfEigenvalue} (which assumes that the eigenfunction $u$ is $L^2$-normalized), we obtain
    \begin{align*} 
        -4 \lambda^2 = \frac{d}{dt}\bigg|_{t=0} (\lambda_t^2) = -4 \lambda^2 c_{N,\lambda}^2 \int_{\partial \mathbb D} \cos(N\theta)^2,
    \end{align*}
    implying that $c_{N,\lambda}^2 = \frac{1}{{\pi}}$ as claimed.
\end{proof}

The rotation around the origin by an angle of $\frac{2\pi}{N}$ and the reflection around the $x$-axis together generate the action of the $N^{th}$ dihedral group $D_N$ on the unit disk. We will perturb the unit disk such that we retain the action of the dihedral group $D_N$ on the domain. The space of $D_N$-invariant functions is preserved by $\Delta^2$. It can be shown that, with $N$ and $\lambda^2$ chosen as above, the eigenvalue $\lambda^2$ is simple among the $D_N$-invariant spectrum of $\mathbb D$ for infinitely many $N$ (see \Cref{lem:nondegenerate_eigenvalues}). The corresponding one-dimensional eigenspace is then spanned by $u_+$. Thus, we may apply the regular perturbation theory of a simple eigenvalue to analyze the behavior of $u_+$ (forthwith denoted by $u$) and $\lambda^2$ under a deformation of $\mathbb D$ which preserves the $D_N$-symmetry.

There is a codimension two manifold of the space of $D_N$-invariant deformations on which $v_t(0) = 0$ and $\lambda_t = \lambda_0$. This is a consequence of the implicit function theorem and the nondegeneracy and linear independence of the functionals $\delta \lambda$ and $\delta v(0)$, To see this, we first note that $\delta_X \lambda =-2\lambda \neq 0$, where $X=r\,\partial_r$ is the scaling vector field. Next, consider a deformation along a direction $Y \in \ker \delta\lambda$. In this case, $\dot \lambda = 0$, so \ref{eq:componentVEquation}--\ref{eq:componentsFirstOrder} take the shape
\begin{align*}
    &(\Delta + \lambda) \dot v = 0 \\
    &(\Delta - \lambda) \dot w = 0 \\
    &\dot v = \dot w \text{ on } \partial D \\
    & \partial_n \dot v - \partial_n \dot w = 2 \lambda (Y \cdot n) w \text{ on } \partial D.
\end{align*}

This allows us to compute $\dot v(0)$. (All calculations in the remainder of this section will be performed at $t=0$, which we will therefore suppress in the notation.) Indeed, consider the Fourier--Bessel expansion of $\dot u$,
\begin{align*}
    \dot u(r,\theta) = \sum_{n=0}^\infty \cos(n \theta) \left(\dot A_n J_n(\sqrt{\lambda})^{-1} J_n(\sqrt{\lambda} r) + \dot B_n I_n(\sqrt{\lambda})^{-1} I_n(\sqrt{\lambda} r) \right)
\end{align*}
Since $\dot v = \dot w$ on $\partial \mathbb D$, $\dot A_n + \dot B_n = 0$.
Taking the $0^{th}$ Fourier coefficient of $(\partial_n \dot v - \partial_n \dot w)|_{\partial \mathbb D} = 2 \lambda (Y \cdot n) v$ yields
\begin{align*}
    \sqrt{\lambda}\, \dot A_0 \left(J_0(\sqrt{\lambda})^{-1} J_0'(\sqrt{\lambda}) - I_0(\sqrt{\lambda})^{-1} I_0'(\sqrt{\lambda})\right) = \frac{\lambda}{\sqrt{\pi^3}} \int_{\partial \mathbb D} \cos(N \theta) (Y \cdot n).
\end{align*}
The expression $J_0(\sqrt{\lambda})^{-1} J_0'(\sqrt{\lambda}) - I_0(\sqrt{\lambda})^{-1} I_0'(\sqrt{\lambda})$ is nonzero if and only if $\lambda^2$ is not also a radial eigenvalue of the clamped plate problem. This can be arranged easily, as we will prove in \Cref{sec:disk}. 

Under this assumption,
\begin{align}
    \label{eq:derivative_of_v0}
    \dot v(0) = \frac{\sqrt{\lambda}}{\sqrt{\pi^3}} J_0(\sqrt{\lambda})^{-1} \left(J_0(\sqrt{\lambda})^{-1} J_0'(\sqrt{\lambda}) - I_0(\sqrt{\lambda})^{-1} I_0'(\sqrt{\lambda})\right)^{-1} \int_{\partial \mathbb D} \cos(N \theta) (Y \cdot n)
\end{align}
Thus, $\delta_Y v(0) \neq 0$ if and only if $\int_{\partial \mathbb D} \cos(N \theta) (Y \cdot n) \neq 0$; in particular, $\delta_{(\cdot)} v(0)$ does not vanish.
We conclude that, since for the scaling vector field $X$ we have $\delta_X \lambda \neq 0$ and $\delta_X v(0) = 0$, the two functionals $\delta \lambda$ and $\delta v(0)$ are linearly independent. Let us also record the formula for the variation of the eigenvalue corresponding to $u_+$ here: Since $\Delta u|_{\partial D} = - \frac{2 \lambda}{\sqrt{\pi}} \cos(N \theta)$,
\begin{align}
    \label{eq:derivative_of_lambdasquared}
    \frac{d}{dt}|_{t=0} (\lambda_t^2) = - \int_{\partial D} (X \cdot n) |\Delta u_+|^2 = - \frac{2\lambda^2}{\pi} \int_{\partial D} (X \cdot n) (1 + \cos(2 N \theta))
\end{align}

The implicit function theorem then furnishes a codimension 2 manifold $\mathcal M$ of $D_N$-symmetric domains, with $\mathbb D\in \mathcal M$, where $\lambda$ and $v(0) = 0$ are independent of the domain in~$\mathcal M$. Furthermore, it follows from the preceding calculations that 
\begin{align}
    T_{\mathbb D}\mathcal M &= \left\{X: \int_{\partial \mathbb D} (X \cdot n) |\Delta u|^2 = 0, \int_{\partial \mathbb D} (X \cdot n) \Delta u = 0\right\} \\ &= \left\{X: \int_{\partial \mathbb D} \left(1+ \cos(2N \theta)\right) (X \cdot n)(\theta) d\theta = 0, \int_{\partial \mathbb D} \cos(N\theta) (X \cdot n)(\theta) d\theta = 0 \right\}
\end{align}
Of course, only the normal component of the vector field~$X$ on the boundary of the domain is relevant for the characterization of the tangent space, as domain deformations are determined  to first order precisely by this  component.

\subsubsection{Consequences for the nodal set: Overview of the remaining proof}
Let us consider a smooth curve $(D_t)_{t \in (-\varepsilon,\varepsilon)}$ of domains in the manifold $\mathcal M$ just constructed. Then $\Delta v_t + \lambda v_t = 0$ for all $t$ (since $\lambda_t = \lambda$ is constant), so $\Delta \ddot v + \lambda \ddot v = 0$ as well, and analogously $\Delta \ddot w + \lambda \ddot w = 0$. This allows us to take the Fourier--Bessel expansions of $v$ and $w$. All components in these  expansions with angular frequency not divisible by $N$ vanish identically because $u_t$ is $D_N$-invariant by construction, so one has
\begin{align*}
    \ddot u(r,\theta) = \ddot w(r,\theta) - \ddot v(r,\theta) = \sum_{k=0}^\infty\left[\ddot A_{kN} J_{kN}(\sqrt{\lambda} r) + \ddot B_{kN} I_{kN}(\sqrt{\lambda} r) \right]\cos(kN\theta).
\end{align*}
Because the curve of deformations under consideration is contained in $\mathcal M$, $\ddot v(0)=\ddot A_0 = 0$. 

Let $X$ denote the deformation velocity at $t=0$. Recalling that $\ddot u = (X \cdot n)^2 \Delta u$ on $\partial \mathbb D$ by \ref{eq:secondDerivative}, we infer
\begin{align*}
    \ddot B_0 = I_0(\sqrt{\lambda})^{-1}\frac{1}{2\pi}\int_{\partial \mathbb D}\ddot u= I_0(\sqrt{\lambda})^{-1} \frac{1}{2\pi}\int_{\partial \mathbb D} (X \cdot n)^2 \Delta u.
\end{align*}

Write $\alpha = \frac{1}{2\pi} \int_{\partial \mathbb D} (X \cdot n)^2 \Delta u$. We claim that if we take
\begin{align}
    \label{eq:conditionOnX3}
    X \cdot n = \cos(3N\theta) + \cos(2N\theta) - \frac12,
\end{align}
then  $X \in T_{\mathbb D}\mathcal M$ and $\alpha \neq 0$. First, $\int_{\mathbb D} (X \cdot n) \cos(N\theta) d\theta = \int_{\mathbb D} (X \cdot n) (\cos(2N\theta) + 1) d\theta = 0$, so indeed $X \in T_{\mathbb D}\mathcal M$ by our characterization of the tangent space. On the other hand, $\int_{\partial \mathbb D} (X\cdot n)^2 \cos(N \theta) d\theta = \pi$ and $\Delta u|_{\partial \mathbb D} = - \frac{2\lambda}{\sqrt{\pi}} \cos(N \theta)$, hence
\begin{align*}
    \alpha = \frac{1}{2\pi} \int_{\partial \mathbb D} (X \cdot n)^2 \left( - \frac{2 \lambda }{\sqrt{\pi}} \cos(N\theta)\right) d\theta = -\frac{\lambda}{\sqrt{\pi}}.
\end{align*}
It follows that
\begin{align}
    \label{eq:secondDerivativeOfW}
    \ddot w(0) = \ddot B_0 I_{0}(0) = I_0(\sqrt{\lambda})^{-1} \alpha = - \frac{\lambda}{\sqrt{\pi}} I_0(\sqrt{\lambda})^{-1}.
\end{align}

We henceforth consider a curve of domains in $\mathcal M$ with initial deformation velocity given by ~\eqref{eq:conditionOnX3}. For small $t$ and $r$, we thus find that
\begin{align*}
    v_t(r,\theta) &= \left(\frac{1}{\sqrt{\pi}} + \mathcal O(t)\right) \cos(N \theta) J_N(\sqrt{\lambda})^{-1} J_N(\sqrt{\lambda} r) + \mathcal O_\lambda(r^{2N}), \\
    w_t(r,\theta) &= \left(- \frac{\lambda}{\sqrt{\pi}} \frac{t^2}{2} + \mathcal O(t^3)\right) I_0^{-1}(\sqrt\lambda) I_0(\sqrt{\lambda} r) + \mathcal O_\lambda(r^{N}e^{\sqrt{\lambda} (r-1)}).
\end{align*}
This is the setting in which we can close to argument.
Let us now sketch how the proof goes, modulo a number of estimates that we will establish in later sections.

The first term in Debye's asymptotic expansion for $J_N(N(\cdot))$ yields that, given $r \in (0,\frac{N}{\sqrt{\lambda}})$,
\begin{align*}
    \log|J_N(\sqrt{\lambda})^{-1} J_N(\sqrt{\lambda} r)| \approx N \varrho\left(\frac{\sqrt{\lambda}}{N} r\right),
\end{align*}
with $\varrho(s) = \log(s) + \sqrt{1+s^2} - \log(1 + \sqrt{1+s^2})$. 
On the other hand, $I_0(\sqrt{\lambda})^{-1} I_0(\sqrt{\lambda} r) \geq \exp(-\sqrt{\lambda}(1-r))$.
\Cref{lem:bounds} below provides sharp uniform bounds that provide the proper framework to use these approximations effectively. 

Next, \Cref{lem:remainder_bounds} below shows that the error terms arising from higher modes in the series expansions of $v_t$ and $w_t$ are negligible in comparison to the leading terms, in a precise sense. Combining these facts, we find that $|v_t(r,\theta)| \leq |w_t(r,\theta)|$ if
\begin{align*}
    &N \varrho\left(\frac{\sqrt{\lambda}}{N} r\right) + c_0 + c_1 \log(\lambda) \leq - \sqrt{\lambda} (1-r) + 2 \log t,
\end{align*}
where $c_0, c_1 > 0$ are constants independent of $\lambda$ which will be determined later in such a way that $c_0 + c_1 \log(\lambda)$ collects all error terms which depend at most polynomially on $\lambda$. Upon rearranging, we thus conclude that $|v_t(r,\theta)| \leq |w_t(r,\theta)|$ if
\begin{align*}
    &\varrho\left(\frac{\sqrt{\lambda}}{N} r\right) - \frac{\sqrt{\lambda}}{N} r + \frac{\sqrt{\lambda}}{N} \leq \frac{2 \log t - c_0 - c_1 \log(\lambda)}{N},
\end{align*}

Already for $t \sim \exp(- \sqrt{\lambda})$, a circle of a fixed radius (asymptotically independent of $\lambda$) will therefore be free of nodal lines, since $\sqrt{\lambda} \approx N$. To improve the size of this disk, we will refine the argument and construct larger deformations with $t $ of order $ \mathcal O(\lambda^{-\beta})$, for which  the asymptotic calculation above is still valid. In this setting, a lower bound for this radius where the eigenfunction does not vanish is given by the (unique) solution $\xi_N\in (0,1)$ to the equation
\begin{align*}
    \varrho\left(\frac{\sqrt{\lambda}}{N} \xi_N\right) - \frac{\sqrt{\lambda}}{N} \xi_N + \frac{\sqrt{\lambda}}{N} \leq \frac{- c_0 - (2 \beta + c_1) \log(\lambda)}{N},
\end{align*}
The right hand side of this expression converges fairly quickly to $0$, so $\xi_N$ is well-approximated by $\frac{N}{\sqrt{\lambda}} \zeta_N$, where $\zeta_N$ is the unique zero of the function $\varrho(x) - x + \frac{\sqrt{\lambda}}{N}$ in $(0,1)$. Since we consider the lowest mode with angular parameter $N$, we have that $\frac{\sqrt{\lambda}}{N} \to 1$ as $N \to \infty$, with uniform bounds in this convergence provided in \Cref{lem:nondegenerate_eigenvalues}. This will enable us to show that $\xi_N$ approaches the unique zero $\zeta_\infty =0.44\dots$ of the function $\varrho(x) - x + 1$. This convergence is significantly slower, as $|\frac{\sqrt{\lambda}}{N} - 1| \simeq N^{-\frac23}$. 

Thus the argument will ensure the existence of clamped plate eigenfunctions on $D_N$-symmetric domains close to a disk which have a region without nodal lines that contains a circle of radius approaching $\zeta_\infty =0.44\dots$ as $N \to \infty$. To rigorously establish this fact, it remains to bound the contributions of
\begin{itemize}
    \item higher derivatives of $v_t$ and $w_t$ and
    \item higher modes in the Fourier--Bessel expansion of $v_t$ and $w_t$
\end{itemize}
to these inequalities. In addition to this, we must show that there exists a sequence of $N$, going to infinity, such that the lowest eigenvalue $\lambda_N$ of the clamped plate problem with angular momentum $N$ stays away from the radial spectrum (i.e., the set of eigenvalues to which there exists an eigenfunction invariant under rotation), which will be the content of~\Cref{lem:nondegenerate_eigenvalues}. This is necessary for two reasons. First, our expression for the derivative $\delta_Y v(0)$ contains a term which is not well-defined when $\lambda_N$ is contained in the radial spectrum. Secondly, in any spectral problem, bounds for the derivatives of eigenfunctions with respect to variation of a parameter will depend on the distance to the remaining spectrum. 

To facilitate the reading of the following sections, which carry out these tasks, Figure~\ref{fig:placeholder} presents a flow chart illustrating the logical connections among the various technical results that need to be established. \Cref{lem:concreteEstimates}, \Cref{lem:oneParameterFamily} and the conclusion of the proof of Theorem 1 form the main body of the remaining argument. They rest on derivative estimates for eigenfunctions in abstract eigenvalue problems (\Cref{lem:abstractEstimates}), are supplemented with explicit estimates on the derivatives of the operator families provided, and their hypotheses are always informed by \Cref{lem:nondegenerate_eigenvalues}, which formally enters the proof only at the end in the form of a lower bound on a spectral gap.

\begin{figure}
    \centering
    \begin{tikzpicture}[node distance=2.5cm]
    \scriptsize

        \node (Lem9) [startstop, text width=4cm] {\textbf{\Cref{lem:concreteEstimates}.} Derivative bounds on clamped plate eigenfunctions along multi-parameter deformations.};
        \node (Lem11) [startstop, text width=4cm, below of = Lem9] {\textbf{\Cref{lem:oneParameterFamily}.} One-parameter deformation with derivative bounds from implicit function theorem.};
        \node (Lem12) [startstop, below of=Lem11, text width = 4cm] {\textbf{Proof of \Cref{thm:main}.} Error estimates from series remainders and higher derivatives.};

        \node (Lem4) [io, text width=4cm, above of = Lem9, xshift=-2.5cm] {\textbf{\Cref{lem:nondegenerate_eigenvalues}.} Lower bound for spectral gap.};

        \node (Lem5) [process, text width=4cm, left of = Lem9, xshift = -3cm] {\textbf{\Cref{lem:bound_on_L2norms}.} Bounds on Bessel integrals.};
        \node (Lem3) [process, text width=4cm, below of = Lem5] {\textbf{\Cref{lem:bounds}.} Bounds on Bessel functions.};
        \node (Lem6) [process, text width=4cm, below of = Lem3] {\textbf{\Cref{lem:remainder_bounds}.} Bounds on Bessel series remainders.};
        
        \node (Lem7) [process, text width=4cm, right of = Lem9, xshift = 3cm] {\textbf{\Cref{lem:abstractEstimates}.} Derivative estimates on eigenfunctions of parametrized spectral problems.};
        \node (Lem10) [process, text width=4cm, below of = Lem7] {\textbf{\Cref{lem:explicitNorms}.} $C^k$-norms of deformation vector fields.};
        \node (Lem8) [process, text width=4cm, above of = Lem7] {\textbf{\Cref{lem:derivativeEstimatesForLaplacian}.} Derivative estimates for the Laplacian in terms of the metric.};

        \draw [arrow] (Lem8) -- (2.5,2.5) -- (2.5,0);
        \draw [arrow] (Lem7) -- (Lem9);
        \draw [arrow] (Lem9) -- (Lem11);
        \draw [arrow] (Lem10) -- (2.5,-2.5) -- (2.5,-1.1) -- (0,-1.1);
        \draw [arrow] (Lem11) -- (Lem12);
        \draw [arrow] (Lem3) -- (Lem6);
        \draw [arrow] (Lem5) -- (Lem9);
        \draw [arrow] (Lem3) -- (Lem5);
        \draw [arrow] (Lem6) -- (Lem12);
        \draw [arrow] (Lem4) -- (-2.5,-4.8) -- (-2.12,-4.8);
    \end{tikzpicture}
    \caption{Structure of the remaining proof}
    \label{fig:placeholder}
\end{figure}
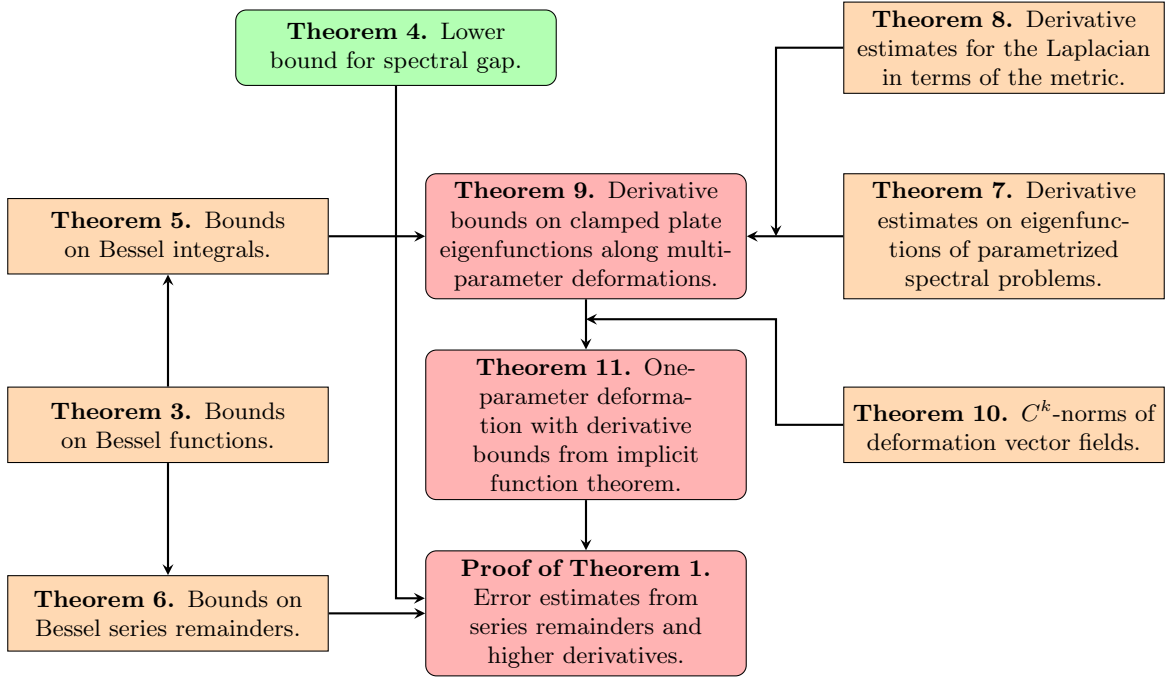

\section{Uniform bounds on Bessel functions}
\label{sec:disk}

Throughout the proof, we need uniform upper and lower bounds on Bessel and modified Bessel functions, with small \emph{relative} error in all the relevant argument regimes. The following lemma provides such bounds. Although these are of course closely related to asymptotic expressions known since the nineteenth century, several of the bounds below are not readily found in the literature, and it is convenient to collect these bounds and their simple proofs here.

\begin{lemma}
    \label{lem:bounds}
    For the Bessel functions $J_n$ and modified Bessel functions $I_n$, the following bounds hold.
    \begin{enumerate}
        \item For any $x \in (0,1)$, $I_0(\sqrt{\lambda})^{-1} I_0(\sqrt{\lambda} x) > e^{- \sqrt{\lambda}(1-x)}$.
        \item For any integer $n \geq 1$ and $x \in (0,1)$,
        \begin{align*}
            0 \leq n \left(\sqrt{1-x^2} + \log\left(\frac{x}{1 + \sqrt{1-x^2}}\right)\right) - \log J_n(n x) \leq \frac{1}{2}\log(2 \pi n) + \frac{1}{12n}
        \end{align*}
        \item For any integer $n \geq 1$ and $x \in (0,\infty)$,
        \begin{align*}
            &\log I_n(n x) - n \left(\sqrt{1+x^2} + \log\left(\frac{x}{1 + \sqrt{1+x^2}}\right)\right) + \frac14 \log\left(1 + \frac1n + x^2\right) + \frac{1}{2}\log(2 \pi n)
        \end{align*}
        is contained in the interval $(0,\frac1{6n})$.
    \end{enumerate}
\end{lemma}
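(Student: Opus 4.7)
My plan is to reduce each of the three bounds to an analysis of a standard integral representation: part (1) is elementary, while parts (2) and (3) follow from a quantitative saddle-point analysis of Schl\"afli-type contour integrals for $J_n$ and $I_n$.

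For (1), I will use Poisson's integral $I_0(y) = \pi^{-1}\int_0^\pi e^{y\cos\theta}\,d\theta$, valid for $y\geq 0$. Differentiating and using $I_0'=I_1$ gives $(\log I_0)'(y) = I_1(y)/I_0(y)$, which is a weighted average of $\cos\theta$ over $(0,\pi)$ with positive weight $e^{y\cos\theta}$, and is therefore strictly less than $1$ for every $y>0$. Integrating this derivative bound from $\sqrt\lambda x$ to $\sqrt\lambda$ yields $\log I_0(\sqrt\lambda) - \log I_0(\sqrt\lambda x) < \sqrt\lambda(1-x)$, which is (1) after exponentiating.

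For (2) and (3) the starting point is the contour representation
\[J_n(nx) = \frac{1}{2\pi i}\oint \exp\!\Bigl(\tfrac{nx}{2}(u - u^{-1})\Bigr) u^{-n-1}\,du, \qquad I_n(nx) = \frac{1}{2\pi i}\oint \exp\!\Bigl(\tfrac{nx}{2}(u + u^{-1})\Bigr) u^{-n-1}\,du.\]
Writing $u = \rho e^{i\theta}$, the real part of the exponent becomes $\tfrac{x}{2}(\rho \mp \rho^{-1})\cos\theta - \log\rho$ (minus for $J_n$, plus for $I_n$); the choice $\rho_0 = (1+\sqrt{1\mp x^2})/x$ makes $\theta=0$ a saddle point with saddle value $\sqrt{1\mp x^2} + \log(x/(1+\sqrt{1\mp x^2}))$ and second derivative $-\sqrt{1\mp x^2}$. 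For (2), the upper bound $J_n(nx)\leq e^{n\varphi(x)}$ follows immediately by bounding the integrand pointwise by its maximum modulus on $|u|=\rho_0$; combined with the classical estimate $j_{n,1}>n$ (so that $J_n(nx)>0$ on $(0,1)$ and logarithms are defined) this gives the left-hand inequality. The lower bound in (2) and the two bounds in (3) follow from a quantitative Laplace-type analysis on the same contour: expand the real part of the phase around $\theta=0$ as a Gaussian plus a bounded correction, compute the Gaussian integral explicitly, and estimate the remainder by controlling $|\operatorname{Re} f_{\rho_0}''(\theta) - f_{\rho_0}''(0)|$ together with the tail contribution outside a shrinking neighborhood of the saddle. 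The Stirling-type constants $\tfrac{1}{12n}$ and $\tfrac{1}{6n}$ indicate that the remainders can be packaged cleanly; the unusual logarithm $\tfrac14\log(1+\tfrac1n+x^2)$ in (3), in place of the Debye-like $\tfrac14\log(1+x^2)$, is presumably obtained by absorbing the leading subleading correction of the Debye expansion into the main term so as to produce a symmetric two-sided error.

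The main difficulty I anticipate is pinning down the explicit numerical constants $\tfrac{1}{12n}$ and $\tfrac{1}{6n}$, which are sharper than the bounds typically delivered by off-the-shelf Debye remainder theorems. I therefore plan to perform the Laplace estimate directly on the contour integrals, carrying each inequality in the chain with explicit constants, rather than invoking a generic asymptotic-expansion result as a black box.
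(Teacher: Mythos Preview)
Your argument for part~(1) is correct and essentially equivalent to the paper's: both show $(\log I_0)'<1$ and integrate. The paper does this via the ODE (observing that $(I_0')^2-I_0^2$ is decreasing), you via the Poisson integral; either is fine.

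For parts~(2) and~(3) you take a genuinely different route from the paper. The paper does \emph{not} use contour integrals or Laplace analysis at all. Instead it works with the logarithmic derivative $q(y)=\frac{1}{n}(\log J_n(ny))'$ (resp.\ for $I_n$), which satisfies a Riccati equation, and exhibits an explicit sub- or supersolution $p(y)$ whose antiderivative is exactly the expression $\sqrt{1\mp y^2}+\log\bigl(y/(1+\sqrt{1\mp y^2})\bigr)$. Monotonicity of the difference then reduces the two-sided bound to evaluating at the endpoints $y\to 0$ and $y\to 1$ (or $y\to\infty$), where the small-argument asymptotics $J_n(z),I_n(z)\sim (z/2)^n/n!$ and the large-argument asymptotics of $I_n$ apply directly. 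The constants $\tfrac{1}{12n}$ and $\tfrac{1}{6n}$ then fall out immediately from Robbins' form of Stirling's error bound $\log n! = n\log n - n + \tfrac12\log(2\pi n) + \theta/(12n)$, $\theta\in(0,1)$; no saddle-point remainder estimate is needed.

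Your saddle-point plan is viable in principle---your maximum-modulus argument for the left inequality in~(2) is clean and correct---but it is likely to be substantially harder to close than the paper's for the remaining bounds. The specific form $\tfrac14\log(1+\tfrac1n+x^2)$ in~(3) does not arise from a standard Laplace computation (which gives $\tfrac14\log(1+x^2)$); in the paper it comes from choosing the Riccati supersolution $p_c(y)=\sqrt{1+y^2}/y - y/(2n(1+c+y^2))$ with $c=1/n$, a device engineered precisely so that the endpoint values match Stirling to the stated accuracy. Reproducing that exact correction term, with a two-sided error uniform over all $x\in(0,\infty)$, from a direct Laplace remainder would require a uniform control that your plan does not yet specify and that off-the-shelf Debye remainder theorems do not obviously deliver.
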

\begin{proof}
    \begin{enumerate}
        \item On $(0,\infty)$, $w(x) := I_0(x)$ satisfies the ordinary differential equation
        \begin{align*}
            w'' + \frac1x w' - w = 0.
        \end{align*}
        Thus, $\frac{d}{dx}[(w')^2-w^2] = - \frac2x (w')^2 < 0$. Since $w'(0)^2 - w(0)^2 = -1$, $w'(x) < w(x)$ on $(0,\infty)$. Hence, $w$ is a subsolution for the ordinary differential equation $y' = y$, which means that $e^{t-s} w(s) > w(t)$ for all $0 < s < t < \infty$. Plugging in $t = \sqrt{\lambda}$, $s = x \sqrt{\lambda}$ yields $I_0(\sqrt{\lambda})^{-1} I_0(\sqrt{\lambda} x) > e^{- \sqrt{\lambda}(1-x)}$ as claimed.
        \item Since $f(x) = J_n(x)$ solves the Bessel equation 
        \begin{align*}
            f''(x) + \frac{1}{x} f'(x) + \left(1-\frac{n^2}{x^2}\right) f(x) = 0,
        \end{align*}
        the derivative $q(y)$ of $\frac{1}{n} \log J_n(n y)$ solves the Riccati equation
        \begin{align}
            \label{eq:Riccati}
            \frac{1}{n} q'(y) + q(y)^2 + \frac{1}{ny} q(y) + 1 - \frac{1}{y^2} = 0.
        \end{align}
        %\textcolor{blue}{An explicit calculation shows $p(y) =  \frac{\sqrt{1-y^2}}{y}$ is a subsolution for \eqref{eq:Riccati}; in fact, $\frac{p'}{p} = - \frac{y}{1-y^2} - \frac1y$, so we have $p' + \frac1y p < 0$, and $p^2 + 1-\frac1{y^2} =0$. It is well known that $J_n(x) \sim (\frac12 x)^n/\Gamma(n+1)$ as $x \to 0$ (see \cite[9.1.61]{AbramowitzStegun1964}). Thus, $q(y) = \frac 1y + \mathcal O(1)$ as $y \to 0$, so $p(y) < q(y)$ for  $y$ small enough. Since $p$ is a subsolution, $p(y) < q(y)$ for all $y \in (0,1)$.  Therefore, the antiderivative $\int (p(y)-q(y))dy$, explicitly given by}
        An explicit calculation shows $p_\sigma(y) = \sigma \frac{\sqrt{1-y^2}}{y}$ is a subsolution for \eqref{eq:Riccati} for all $\sigma \in (0,1]$: Since $\frac{p_\sigma'}{p_\sigma} = - \frac{y}{1-y^2} - \frac1y$, we have $p_\sigma' + \frac1y p_\sigma < 0$, and $p_\sigma^2 + 1-\frac1{y^2} \leq 0$. It is well known that $J_n(x) = (\frac12 x)^n/\Gamma(n+1)$ as $x \to 0$ (see \cite[9.1.61]{AbramowitzStegun1964}). Thus, $q(y) = \frac 1y + \mathcal O(1)$ as $y \to 0$. In particular, $p_\sigma(y) < q(y)$ for any fixed $\sigma \in (0,1)$ and $y$ small enough. Since $p_\sigma$ is a subsolution, $p_\sigma(y) < q(y)$ for all $y \in (0,1)$. Taking $\sigma \to 1$, we find that $p(y) := p_1(y) < q(y)$ for all $y \in (0,1)$ (the inequality is strict since $p$ is a subsolution as well). Therefore, the antiderivative $\int (p(y)-q(y))dy$, explicitly given by
        \begin{align*}
            E(y) := \left( \log(y) + \sqrt{1-y^2} - \log(1+\sqrt{1-y^2})\right) - \frac1n \log J_n(ny),
        \end{align*}
        is strictly decreasing. Since $J_n(x) \sim (\frac12 x)^n/\Gamma(n+1)$ as $x \to 0$ and $J_n(n) <2^{\frac13}/(3^{\frac23}\Gamma(\frac23) n^{\frac13})$ (\cite[9.1.61]{AbramowitzStegun1964}, originally due to Cauchy),
        \begin{align*}
            \lim_{y \to 0} E(y) &= 1 - \log n + \frac1n \log \Gamma(n+1) < \frac1n \left( \frac12 \log(2 \pi n) + \frac{1}{12n} \right), \\
            E(1) &= - \frac1n \log J_n(n) > \frac1n \left( \log\left(\frac{3^{\frac23}\Gamma(\frac23)}{2^{\frac13}}\right) + \frac13 \log n \right) > 0.
        \end{align*}
        This yields the claimed bounds on $n p(x) - \log J_n(nx)$.
        \item Let $q(y)$ now denote the derivative of $\frac1n \log I_n(y)$. The function $q$ solves the Riccati equation
        \begin{align}\label{eq_lastric}
            \frac{1}{n} q'(y) + q(y)^2 + \frac{1}{ny} q(y) - 1 - \frac{1}{y^2} = 0,
        \end{align}
        which is derived from the modified Bessel differential equation. For $c \in [0,1]$, consider
        \begin{align*}
            p_c(y) := \frac{\sqrt{1+y^2}}{y} -  \frac{y}{2n(1+c + y^2)}.
        \end{align*}
        A tedious but straightforward calculation shows  that
        \begin{align*}
            &\frac{1}{n} p'(y) + p(y)^2 + \frac{1}{ny} p(y) - 1 - \frac{1}{y^2} \\
            = \ & \frac{c^2n + (\frac{y^2}{4} - 1)\sqrt{1+y^2} + c(n+ny^2 - \sqrt{1+y^2})}{n^2\sqrt{1+y^2}(1+c+x^2)^2}
        \end{align*}
        The denominator is positive. Writing $t = \sqrt{1+y^2}$, the numerator becomes
        \begin{align*}
            c^2 n + (\tfrac14t^2 - \tfrac54)t + cn(t^2 - \tfrac1nt).
        \end{align*}
        If $c = \frac1n$, the above factorizes in $(t-1)(\frac14 t^2 + \frac54 t - \frac1n)$, which is positive for $t > 1$. Since $n \geq 1$, the numerator can be seen to increase with $c$. Thus, $p_c$ is a supersolution to the Riccati equation~\eqref{eq_lastric} for $c \in [\frac1n,1]$. From the standard power series expansion of $I_n(z)$ around $0$, we infer that
        \begin{align*}
            q(y) = \frac{1}{y} + \frac{ny}{2n+2}  + \mathcal O (|y|^3).
        \end{align*}
        On the other hand, $p_c(y) = \frac{1}{y} + \frac y2(1 - \frac{1}{n(1+c)}) + \mathcal O(|y|^3) > \frac{1}{y} + \frac{n}{n+1} \frac{y}{2} + \mathcal O (|y|^3)$ if $c \geq \frac1n$. Thus, $p_c(y) > q(y)$ for small $y$ if $c \in (\frac1n,1]$. Since $p_c$ is a supersolution, $p_c(y) \geq q(y)$ for all $y > 0$. Taking $c \to \frac1n$, we obtain $p_{\frac1n}(y) \geq q(y)$ for all $y > 0$, but since $p_{\frac1n}$ is a supersolution as well, $p_{\frac1n}(y) > q(y)$ on $(0,\infty)$ follows. Thus, the antiderivative of $p_{\frac1n}(y) - q(y)$, which is explicitly given by
        \begin{align*}
            r(y) := \sqrt{1+y^2} + \log\frac{y}{1+\sqrt{1+y^2}} - \frac1{4n} \log\left(\frac{n+1}{n}+y^2\right) - \frac{1}{n}I_n(ny),
        \end{align*}
        is increasing on $(0,\infty)$. From the modified Bessel function's large-argument asymptotics $I_n(z) \sim e^z/\sqrt{2 \pi n}$ (\cite[9.7.1]{AbramowitzStegun1964}) we infer that $r(y) < \frac1{2n} \log(2 \pi n)$. The small argument asymptotics \cite[9.6.7]{AbramowitzStegun1964} $I_n(z) \sim \left( \frac{z}{2} \right)^n/n!$ imply 
        \begin{align*}
            r(y) > \lim_{t \to 0} r(t) = \frac1n \log n! - \log n + 1 - \frac1{4n} \log\left(1 + \frac1n\right).
        \end{align*}
        Robbins' error bound~\cite{Robbins55} $\frac{1}{12n+1} < \log n! - n(\log(n)-1) - \frac12\log(2 \pi n) < \frac{1}{12n}$ and the estimate $\log(1+\frac1n) < \frac{2}{2n+1}$ let us simplify this to 
        \begin{align*}
            r(y) > \frac1{2n}\log(2 \pi n) + \frac{1}{n(12n+1)} - \frac{1}{2n(2n+1))} > \frac1{2n}\log(2 \pi n) - \frac1{6n^2},
        \end{align*}
        yielding the bound claimed in the lemma's statement. \qedhere
    \end{enumerate}
\end{proof}

Next, we prove that there exist infinitely many ``highly nondegenerate'' eigenvalues of the clamped plate problem in dihedral symmetry:

\begin{lemma}
    \label{lem:nondegenerate_eigenvalues}
    For any $N \in \mathbb N$, consider the function $W_N(x) = J_N(x)^{-1} J_N'(x) - I_N(x)^{-1} I_N'(x)$.
    There exist infinitely many $N \in \mathbb N$, $N \geq 100$, such that the first positive zero $\xi_{N,1}$ of $W_N$ is at a distance of at least $1$ from any zero of $W_0$, $- 6 \leq W_0(\xi_{N,1}) \leq - 1$, $\sqrt{\frac{2}{\pi}} \xi_{N,1}^{-\frac12} \geq J_0(\xi_{N,1}) \geq \frac{1}{10} \xi_{N,1}^{-\frac12}$ and $N + N^{1/3} < \xi_{N,1} < N + 3 N^{1/3}$. Finally, the corresponding clamped plate eigenvalue $\lambda^2 = \xi_{N,1}^4$ is at a distance of at least $4N^3$ from the remaining $D_N$-invariant spectrum of the clamped plate problem.
\end{lemma}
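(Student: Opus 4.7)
The plan is to locate $\xi_{N,1}$ as an $O(1)$ perturbation of the first positive zero $j_{N,1}$ of $J_N$, to reduce Claims 1--3 to conditions on the residue $\xi_{N,1} \bmod 2\pi$ via the classical Bessel asymptotics, and to invoke equidistribution to produce infinitely many good $N$. Claim 5 will then follow as a direct consequence of Claims 1 and 4.

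First I would locate $\xi_{N,1}$. Using the Riccati-type analysis underlying \Cref{lem:bounds}, one checks that $J_N'/J_N < I_N'/I_N$ on $(0, j_{N,1})$ (as these logarithmic derivatives are approximately $\sqrt{N^2 - y^2}/y$ and $\sqrt{N^2 + y^2}/y$ respectively), so $W_N < 0$ there, and hence the first positive zero of $W_N$ lies just past $j_{N,1}$. The local expansion $J_N(y) = J_N'(j_{N,1})(y - j_{N,1}) + O((y - j_{N,1})^2)$ combined with $I_N'(j_{N,1})/I_N(j_{N,1}) = \sqrt 2 + O(N^{-2/3})$ then yields $\xi_{N,1} = j_{N,1} + 1/\sqrt 2 + O(N^{-2/3})$. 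Combining this with the classical Olver expansion $j_{N,1} = N + a_1 \cdot 2^{-1/3} N^{1/3} + O(N^{-1/3})$ (where $-a_1 \approx -2.338$ is the first negative zero of the Airy function, so $a_1 \cdot 2^{-1/3} \approx 1.856$), I obtain $\xi_{N,1} = N + 1.856\, N^{1/3} + O(1)$, which lies strictly in $(N + N^{1/3}, N + 3 N^{1/3})$ for all $N \geq 100$, establishing Claim 4.

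Next I would translate Claims 1--3 into residue conditions. The Hankel asymptotics $J_0(x) = \sqrt{2/(\pi x)} \cos(x - \pi/4) + O(x^{-3/2})$, $J_1(x) = \sqrt{2/(\pi x)} \sin(x - \pi/4) + O(x^{-3/2})$, together with the exponential asymptotics of $I_0, I_1$, yield $W_0(x) = -\tan(x - \pi/4) - 1 + O(x^{-1})$. Then Claim 1 becomes $\xi_{N,1} \bmod \pi \in [1, \pi - 1]$ (distance $\geq 1$ from the zeros $\approx k\pi$ of $W_0$); Claim 2 becomes $\tan(\xi_{N,1} - \pi/4) \in [0, 5]$; and Claim 3 becomes $\cos(\xi_{N,1} - \pi/4) \in [\tfrac{1}{10}\sqrt{\pi/2},\, 1]$, which picks out the positive arch of cosine. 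A direct intersection shows that these conditions together define a single arc $U \subset \mathbb R/2\pi \mathbb Z$ of length $\approx \pi - 2 \approx 1.14$. From Step 1, the increments $\xi_{N+1,1} - \xi_{N,1} = 1 + O(N^{-2/3})$ approach $1$; since $1/(2\pi)$ is irrational, a standard Weyl equidistribution argument (controlling the exponential sum $\sum_{n=1}^M e^{ih(n + c_1 n^{1/3})}$ via summation by parts, yielding the bound $O(M^{1/3})$ for each nonzero integer $h$) shows that $\xi_{N,1} \bmod 2\pi$ equidistributes. In particular it enters $U$ for infinitely many $N \geq 100$, proving Claims 1--3.

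Finally, I would deduce Claim 5 from Claims 1 and 4. For radial eigenvalues $\nu^4$ (zeros $\nu$ of $W_0$), the bound $|\xi_{N,1} - \nu| \geq 1$ combined with $\xi_{N,1} > N + N^{1/3}$ and the identity $|a^4 - b^4| = |a - b|(a + b)(a^2 + b^2)$ gives $|\xi_{N,1}^4 - \nu^4| \geq 4N^3$ by a direct computation (valid for $N \geq 100$). For modes $kN$ with $k \geq 2$, one has $\xi_{kN,1}^4 \geq (2N)^4 = 16N^4 > \xi_{N,1}^4 + 4N^3$. For higher eigenvalues $\xi_{N,m}$ with $m \geq 2$ within mode $N$, the spacing $\xi_{N,m+1} - \xi_{N,m}$ is bounded below by the first Airy gap $(a_2 - a_1) \cdot 2^{-1/3} > 1$, yielding the same bound. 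The main obstacle in the argument is the equidistribution step: because the shift $\xi_{N+1,1} - \xi_{N,1}$ approaches $1$ only at rate $O(N^{-2/3})$ and the good arc $U$ is only marginally longer than this shift, a soft pigeonhole argument does not suffice, and the full Weyl sum estimate above is needed.
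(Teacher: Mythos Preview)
Your approach is correct and close in spirit to the paper's, but the two differ in how $\xi_{N,1}$ is localized and in how the equidistribution is handled. The paper does not compute $\xi_{N,1} - j_{N,1}$ at all: instead, it uses the recurrence $W_n(x) = -J_{n+1}(x)/J_n(x) - I_{n+1}(x)/I_n(x)$ to obtain the sandwich $\xi_{N,1} \in (j_{N,1}, j_{N+1,1})$, an interval of length $\leq 1.03$ by the Lang--Wong bounds. It then targets a very narrow window $j_{N,1} \in \pi\mathbb Z + (1.02, 1.10)$ so that the \emph{entire} sandwich interval lies in $\pi\mathbb Z + (1.02, 2.13)$, which already forces all three of your residue conditions simultaneously. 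By contrast, you pin down $\xi_{N,1} = j_{N,1} + 1/\sqrt2 + O(N^{-2/3})$ via a local Airy-scale analysis and then work with the full good arc for $\xi_{N,1}\bmod 2\pi$. Your route is more quantitative but requires the extra asymptotic; the paper's sandwich plus narrow-window trick sidesteps that computation entirely.

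On the equidistribution step, the paper simply asserts that $j_{n,1}$ lands in $\pi\mathbb Z + (1.02, 1.10)$ infinitely often as ``clear'' from the Lang--Wong expansion, whereas you supply the explicit Weyl-sum/Abel-summation argument bounding $\sum_{n\le M} e^{ih(n + c n^{1/3})}$ by $O(M^{1/3})$. In that respect your write-up is more complete than the paper's. The treatment of the spectral gap (Claim~5) is essentially identical in both: radial eigenvalues via $|\xi_{N,1}-\xi_{0,j}|\ge 1$, modes $kN$ with $k\ge 2$ via $\xi_{kN,1} > 2N$, and higher $\xi_{N,m}$ via the second Airy zero (the paper gets $\xi_{N,2} - \xi_{N,1} > \pi$, slightly stronger than your $>1$).
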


\begin{proof}
    To analyze the cross-ratio $W_0(x)$, it is helpful to write the Bessel function $J_0(x)$ in terms of the Bessel modulus and phase functions, defined so that $M_0(x)^2 = J_0(x)^2 + Y_0(x)^2$, where $Y_0$ is the corresponding Bessel function of second kind, and $J_0(x) = M_0(x)\cos(\theta_0(x))$. From the discussion in \cite[pg. 446]{Watson1966} it follows that $0 > \frac{M_0'(x)}{M_0(x)} > -\frac{1}{2x}$. The bounds on $M_0(x)^2$ provided in \cite[9.2.28]{AbramowitzStegun1964} yield $\frac{2}{\pi x}(1 - \frac{1}{8x^2}) < M_0(x)^2 < \frac{2}{\pi x}$. By \cite[9.2.21]{AbramowitzStegun1964}, $M_0(x)^2\theta_0'(x) = \frac{2}{\pi x}$, and hence $1 < \theta_0'(x) < (1 - \frac{1}{8x^2})^{-1}$. If $x > 10$, $(1 - \frac{1}{8x^2})^{-1} < 1 + \frac{1.01}{8x^2}$. Together with the large-argument asymptotics of $J_0$, it follows that $x - \frac\pi 4 - \frac{1.01}{8x} < \theta_0(x) < x - \frac\pi 4$.
    
    One may show $1 - \frac{1}{x} < I_0'(x)/I_0(x) < 1$, e.g.\ as follows: $q(x) = I_0'(x)/I_0(x)$ solves the ordinary differential equation $q' + q^2 + \frac 1x q - 1 = 0$, to which $p_+(x) = 1$ is a supersolution on $(0,\infty)$ and $p_-(x) = 1-\frac1x$ is a subsolution on $(1,\infty)$ satisfying $p_-(1)=0  < q(1)<1=p_+(1)$, so $p_- < q < p_+$.

    It follows that the expression 
    \begin{align}
        \label{eq:W0expression}
        W_0(x) = \frac{J_0'(x)}{J_0(x)} - \frac{I_0'(x)}{I_0(x)} = \frac{M_0'(x)}{M_0(x)} - \tan(\theta_0(x)) \theta_0'(x) - \frac{I_0'(x)}{I_0(x)}
    \end{align}
    is well-approximated by $-\tan(x-\frac\pi4)-1$ and so its zeros $\xi_{0,k}$ are well approximated by $(k+\frac12)\pi$. Precisely, $W_0(x) = 0$ and $x \geq 100$ imply
    \begin{align*}
        |\tan(\theta_0(x)) + 1| = |\theta_0'(x)^{-1}|\cdot\left|\frac{I_0'(x)}{I_0(x)} - \frac{M_0'(x)}{M_0(x)} - \theta_0'(x)\right| < \frac1x + \frac{1.01}{x^2} < \frac{1.02}{x}.
    \end{align*}
    Since $\tan'(x) \geq 1$, it follows that $\mathrm{dist}(\theta_0(x),\pi \mathbb Z + \frac34 \pi) < \frac{1.02}{x}$ and hence $\mathrm{dist}(x,\pi \mathbb Z) < \frac{1.25}{x}$.

    Next, we obtain information on $\xi_{n,1}$. The recurrence formulae $J_n'(x) = - J_{n+1}(x) + \frac{n}{x} J_n(x)$ and $I_n'(x) = I_{n+1}(x) + \frac{n}{x} I_n(x)$ (\cite[9.1.27, 9.6.26]{AbramowitzStegun1964}) imply that
    \begin{align*}
        W_n(x) = - \frac{J_{n+1}(x)}{J_n(x)} - \frac{I_{n+1}(x)}{I_n(x)}.
    \end{align*}
    Let $j_{n,k}$ denote the $k^{th}$ positive zero of $J_n$. Then $\xi_{N,1} \in (j_{n,1},j_{n+1,1})$. After all, $W_n(x) < 0$ on $(0,j_{n,1})$, $W_n(x) \to + \infty$ as $x \to j_{n,1}$ from above, and $W_n(x) = - \frac{I_{n+1}(x)}{I_n(x)} < 0$ at $j_{n+1,1}$.

    Lang and Wong~\cite[(1.5)]{Lang1996} obtained the following highly accurate bounds on $j_{n,1}$, valid for $n \geq 10$:
    \begin{align}
        \label{eq:LangWongBound}
        j_{n,1} = n + \frac{|a_1|}{2^{\frac13}} n^{\frac 13} + \frac{3}{20} a_1^2 \frac{2^{\frac13}}{n^{\frac13}} + \beta n^{-1},
    \end{align}
    where $- 0.060804 \leq \beta \leq - 0.000263$ and $a_1=-2.338\dots$ denotes the first negative zero of the Airy function. The length of the interval $(j_{n,1},j_{n+1,1})$ is thus bounded by $1 + |a_1|2^{-\frac13} 3^{-1} n^{-\frac23} + 0.07 n^{-1}$, which is less than $1.03$ if $n \geq 100$. In view of \eqref{eq:LangWongBound}, it is also clear that $j_{n,1}$ lies in $\pi \mathbb Z + (1.02,1.10)$ infinitely often, in which case $\mathrm{dist}(\xi_{n,1},\pi \mathbb Z) > 1.02$ and hence $|\xi_{n,1}-\xi_{0,k}| \geq 1$ for all $k \in \mathbb N$. It is also clear that 
    \begin{align*}
        n + \frac{|a_1|}{2^{\frac13}} n^{\frac13} < j_{n,1} < \xi_{n,1} < j_{n+1,1} &< n + 1 + \frac{|a_1|}{2^{\frac13}} (n+1)^{\frac13} + \frac{3}{20} a_1^2 \frac{2^{\frac13}}{(n+1)^{\frac13}} \\
        &< n + n^{\frac13} \left( n^{-\frac13} + \frac{|a_1|}{2^{\frac13}} + \frac{|a_1|}{3 \cdot 2^{\frac13} n} + \frac{3 \cdot 2^{\frac13} a_1^2}{20 n^{\frac23}}\right),
    \end{align*}
    which, assuming $n \geq 100$, can be reduced to the weaker, but simpler bound $n + 1.85 n^{\frac13} < \xi_{n,1} < n + 2.13 n^{\frac13}$.

    The $D_N$-invariant spectrum of the clamped plate problem on the disk consists of $(\xi_{0,j}^4)_{j=1}^\infty$ and $(\xi_{kN,j}^4)_{k,j=1}^\infty$. The closest points to $\lambda^2 = \xi_{n,1}$ in the spectrum are in the rotationally invariant spectrum, since $\xi_{n,j} \geq \xi_{n,2} > j_{n,2} > n + \frac{|a_2|^2}{2^{\frac13}} n^{\frac13}$ for $j \geq 2$, so $\xi_{n,j} > \xi_{n,1} + \pi$, and $\xi_{kN,j} > 2N$ for $k \geq 2$. Now, $|\xi_{n,1}^4 - \xi_{0,j}^4| \geq 4 |\xi_{n,1} - \xi_{0,j}| \min(\xi_{n,1}^3,\xi_{0,j}^3) \geq 4 n^2$ as claimed.

    If $x \in \pi \mathbb Z + (1.02,2.13)$, then $\theta_0(x) \in \pi \mathbb Z + (1.01-\frac{\pi}{4},2.13-\frac{\pi}{4})$ and $\tan(\theta_0(x)) \in (0.22,4.35)$. Combining equation \eqref{eq:W0expression} with our bounds on its constituents implies $W_0(x) \in (-1.21,-5.36)$.

    The claimed upper bound on $J_0(x)$ is simply the well-known unconditional bound on $J_0$, while the lower bound follows analogously to that on $W_0$: From $\theta_0(x) \in \pi \mathbb Z + (1.01-\frac{\pi}{4},2.13-\frac{\pi}{4})$ it follows that $\cos(\theta_0(x)) \in (0.22,0.98)$ and hence that $J_0(x) > 0.22 (\pi x/2)^{-\frac12} > 0.17 x^{-\frac12}$.
\end{proof}

\begin{lemma}
    \label{lem:bound_on_L2norms}
    Let $N \geq 100$ and $N + N^{\frac13} < \sqrt{\lambda} < N + 3N^{\frac13}$. Then
    \begin{align}
        \| J_0(\sqrt{\lambda} |\cdot|) \|_{L^2(\mathbb D_{1-\frac1N})} & \geq 0.82 \cdot \lambda^{-1/4}\\
        \| I_0(\sqrt{\lambda} |\cdot|) \|_{L^2(\mathbb D_{1-\frac1N})} & \geq 0.38 \cdot \lambda^{-1/4} I_0(\sqrt{\lambda})
    \end{align}
\end{lemma}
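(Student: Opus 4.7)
The proof splits into two essentially independent estimates, handled by rather different techniques.

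For the $I_0$ estimate, the plan is to apply item (1) of \Cref{lem:bounds} directly. Squaring the pointwise inequality $I_0(\sqrt\lambda r) \geq I_0(\sqrt\lambda)\, e^{-\sqrt\lambda(1-r)}$ and integrating against $2\pi r\,dr$ over $(0,1-\tfrac1N)$ reduces the claim to the elementary integral
\[
\int_0^{1-1/N} r\, e^{-2\sqrt\lambda(1-r)}\,dr = \frac{(1-1/N)\,e^{-2\sqrt\lambda/N}}{2\sqrt\lambda} - \frac{e^{-2\sqrt\lambda/N}-e^{-2\sqrt\lambda}}{4\lambda}.
\]
Using the spectral-gap estimate $\sqrt\lambda/N < 1 + 3N^{-2/3}$ from \Cref{lem:nondegenerate_eigenvalues}, one has $e^{-2\sqrt\lambda/N} \geq e^{-2}\exp(-6N^{-2/3})$, which for $N\geq 100$ gives $e^{-2\sqrt\lambda/N} \geq 0.75\,e^{-2}$. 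Plugging in and taking the square root yields a prefactor of roughly $\sqrt{\pi}/e \approx 0.65$ times a factor close to $1$, comfortably exceeding the claimed constant $0.38$.

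For the $J_0$ estimate, the plan is to use the modulus-phase decomposition $J_0(y)=M_0(y)\cos\theta_0(y)$ together with the explicit estimates $M_0(y)^2 \geq \tfrac{2}{\pi y}(1-\tfrac{1}{8y^2})$ and $\theta_0'(y)\geq 1$ that were already extracted in the proof of \Cref{lem:nondegenerate_eigenvalues}. Writing $\cos^2 = \tfrac{1}{2}(1+\cos 2\theta_0)$ splits the quantity to bound into a "smooth" and an "oscillating" piece:
\[
2\pi \int_0^{1-1/N} r J_0(\sqrt\lambda r)^2 dr \;=\; \pi\!\!\int_0^{1-1/N}\! r M_0(\sqrt\lambda r)^2 dr + \pi\!\!\int_0^{1-1/N}\! r M_0(\sqrt\lambda r)^2 \cos(2\theta_0(\sqrt\lambda r))\, dr.
\]
For the smooth integral, I would change variables to $y=\sqrt\lambda r$, reducing it to $\tfrac{\pi}{\lambda}\int_0^{\sqrt\lambda(1-1/N)} y M_0(y)^2\,dy$, and apply the lower bound on $y M_0(y)^2$ on an interval $[y_*,\sqrt\lambda(1-1/N)]$ with $y_*$ a small fixed cutoff (say $y_*=10$), dropping the nonnegative contribution from $[0,y_*]$. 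This produces the expected main term $\tfrac{2(1-1/N)}{\sqrt\lambda}$ modulo $O(\lambda^{-1})$ corrections. The oscillating integral is handled by writing $\cos(2\theta_0(y)) = (2\theta_0'(y))^{-1}\tfrac{d}{dy}\sin(2\theta_0(y))$ and integrating by parts: the boundary term at $y=0$ vanishes (since $yM_0(y)^2 \to 0$), the boundary at the upper endpoint is $O(1)$ using $yM_0^2\leq\tfrac{2}{\pi}$ and $\theta_0'\geq 1$, and the interior term involves $\tfrac{d}{dy}(yM_0^2/\theta_0')$, which decays like $O(y^{-2})$ by the controlled size of $M_0'/M_0$ and $\theta_0''$ from the earlier proof, hence is absolutely integrable. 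After restoring the $1/\lambda$ from the change of variables this contributes $O(\lambda^{-1})$. Taking square roots, the resulting estimate $\|J_0(\sqrt\lambda|\cdot|)\|_{L^2}^2 \geq \tfrac{2(1-1/N)}{\sqrt\lambda}(1-O(\lambda^{-1/2}))$ yields a prefactor close to $\sqrt{2}\approx 1.41$, well above the claimed $0.82$.

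The main technical obstacle is the oscillating integral, specifically the need to handle the small-$y$ region where the asymptotic bounds on $M_0$ and $\theta_0$ used elsewhere in the paper are not sharp, and at the same time to produce an integration-by-parts remainder that decays fast enough to be absorbed by the generous numerical gap between $\sqrt{2}$ and $0.82$. The cleanest resolution is to cut off the integral at a fixed $y_*$ chosen large enough for the asymptotic bounds on $M_0^2$, $\theta_0'$ and $\theta_0''$ to be valid with small constants, and to absorb the entire contribution from $[0,y_*]$ into the $O(\lambda^{-1})$ error by the crude estimate $|yM_0^2\cos(2\theta_0)|\leq yM_0^2$ and finiteness of $\int_0^{y_*} y M_0^2\,dy$.
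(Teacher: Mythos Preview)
Your treatment of the $I_0$ bound matches the paper's exactly: both use the pointwise inequality from \Cref{lem:bounds}(1) and integrate the resulting exponential in $r$ explicitly, arriving at a constant comfortably above $0.38$.

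For the $J_0$ bound, however, you take a genuinely different route. The paper avoids oscillatory-integral analysis altogether by invoking \emph{Lommel's integral},
\[
\int_0^r s\,J_0(\sqrt\lambda s)^2\,ds \;=\; \frac{r^2}{2}\bigl(J_0(\sqrt\lambda r)^2 + J_0'(\sqrt\lambda r)^2\bigr),
\]
evaluating it at $r=\xi/\sqrt\lambda$ where $\xi$ is the largest zero of $J_0$ below $\sqrt\lambda(1-\tfrac1N)$, and then bounding $J_0'(\xi)^2$ from below via the Wronskian identity $J_0'(\xi)Y_0(\xi)=-\tfrac{2}{\pi\xi}$ together with $Y_0(\xi)^2=M_0(\xi)^2\le\tfrac{2}{\pi\xi}$. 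This gives the lower bound in three or four lines with no integration by parts. Your approach via the modulus--phase decomposition and $\cos^2\theta_0=\tfrac12(1+\cos 2\theta_0)$ is correct and yields a sharper leading constant ($\sqrt{2}$ rather than the paper's $\sqrt{1.34}$), but it is longer: you need a cutoff at a fixed $y_*$, an integration by parts whose boundary and interior terms must each be controlled, and a separate argument for the small-$y$ region where the asymptotics for $M_0,\theta_0$ are not yet effective. The trade-off is that the paper's argument is special to Bessel functions (Lommel's formula is a miracle of the Bessel ODE), whereas your oscillatory-integral method would transplant to more general settings where no such closed-form antiderivative exists.
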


\begin{proof}
    The first integral can be estimated with the help of Lommel's integral (see \cite[p. 135]{Watson1966}),
    \begin{align*}
        \int_0^r s J_0(\sqrt{\lambda} s)^2 ds = \frac{r^2}{2} \left(  J_0(\sqrt{\lambda}r)^2 + J_0'(\sqrt{\lambda}r)^2 \right).
    \end{align*}
    Let $\xi$ denote the largest zero of $J_0$ less than $\sqrt{\lambda}(1-\frac1N)$, then 
    \begin{align*}
        \int_0^{1-\frac1N} s J_0(\sqrt{\lambda} s)^2 ds \geq \int_0^{\xi/\sqrt{\lambda}} s J_0(\sqrt{\lambda} s)^2 ds = \frac{\xi^2}{2\lambda} J_0'(\xi)^2.
    \end{align*}
    Consider again the modulus and phase functions, so that $J_0(x) = M_0(x)\cos(\theta_0(x))$, and recall that $\theta_0'(x) = \frac{2}{\pi x M_0(x)^2}$ (\cite[9.2.21]{AbramowitzStegun1964}). By the Wronskian identity $J_0(z)Y_0'(z)-J_0'(z)Y_0(z) = \frac{2}{\pi z}$ (\cite[9.1.16]{AbramowitzStegun1964}) and the inequality $M_0(z)^2 \leq \frac{2}{\pi z}$ (\cite[p. 447]{Watson1966}),
    \begin{align*}
        \frac{\xi^2}{2\lambda} J_0'(\xi)^2 = \frac{2}{\pi^2 \lambda Y_0(\xi)^2} = \frac{2}{\pi^2 \lambda  M_0(\xi)^2} \geq \frac{2}{\pi^2 \lambda} \frac{\xi \pi}{2} = \frac{\xi}{\pi \lambda}.
    \end{align*}
    Furthermore, the inequality on $M_0(z)$ implies $\theta'_0 > 1$ and hence that $\xi \geq \sqrt{\lambda}(1-\frac1N) - \pi$. Thus,
    \begin{align*}
        \pi \int_0^{1-\frac1N} s J_0(\sqrt{\lambda} s)^2 ds \geq \left( \frac{99}{100} - \pi \frac1{\sqrt \lambda} \right) \frac{1}{\sqrt{\lambda}} \geq \frac{0.67}{\sqrt{\lambda}},
    \end{align*}
    using the assumption $N, \sqrt{\lambda} \geq 100$.
    
    To estimate the second integral, we apply the bound $I_0(\sqrt{\lambda} r) \leq I_0(\sqrt{\lambda}) e^{\sqrt{\lambda}(r-1)}$ from \Cref{lem:bounds} and compute
    \begin{align*}
        \pi \int_0^{1-\frac1N} s I_0(\sqrt{\lambda} s)^2 ds &\geq \pi I_0(\sqrt{\lambda})^2 \int_0^{1-\frac1N} s e^{2\sqrt{\lambda}(s-1)} ds \\
        &= \frac{\pi}{2 \sqrt{\lambda}} I_0(\sqrt{\lambda})^2 \left( e^{-2 \frac{\sqrt{\lambda}}{N}} \left(1-\frac1N\right) - \frac{1}{2\sqrt{\lambda}} e^{-2 \frac{\sqrt{\lambda}}{N}} + \frac{1}{2\sqrt{\lambda}} e^{-2 \sqrt{\lambda}} \right) \\
        &\geq \frac{\pi}{2 \sqrt{\lambda}} I_0(\sqrt{\lambda})^2 \left( e^{-2 (1+3N^{-2/3})} \left(1-\frac1N\right) - \frac{1}{2N} e^{-2} \right) \geq \frac{0.15}{\sqrt{\lambda}} I_0(\sqrt{\lambda})^2.
    \end{align*}
    In the last line, the assumption $N \geq 100$ and the bounds for $\sqrt{\lambda}$, which are valid for infinitely many $N$ by \Cref{lem:nondegenerate_eigenvalues}, were used to simplify the terms occurring.
\end{proof}

\begin{lemma}
    \label{lem:remainder_bounds}
    Let $N \geq 100$ and $\sqrt{\lambda} \in (N + N^{\frac13}, N + 3 N^{\frac13})$. Let $\mathbb D'$ denote the disk of radius $1-\frac1N$. Set $\varrho(x) = \log(x)+\sqrt{1-x^2}-\log(1+\sqrt{1-x^2})$ if $x \in (0,1)$ and $0$ if $x \in (1,\infty)$. Then:
    \begin{enumerate}
        \item Let $V_N \in C^\infty(\overline{\mathbb D'})$ be given by $V_N(r,\theta) = a_N J_N(\sqrt{\lambda} r)\cos(N \theta)$ for some $a_N \in \mathbb R$. Assume that $\|V_N\|_{L^2(\mathbb D')} \leq 1$.  Then 
        \begin{align*}
            |V_N(x)| \leq 5N \exp\left(N \varrho\left( \frac{\sqrt{\lambda}}{N} |x|\right) \right)
        \end{align*}
        \item Let $V \in C^\infty(\overline{\mathbb D'})$ be a solution to $\Delta V + \lambda V = 0$ with Bessel expansion
        \begin{align*}
            V(r,\theta) = \sum_{k=2}^\infty a_k J_{kN}(\sqrt{\lambda}r) \cos(kN \theta).
        \end{align*}
        Assume that $\|V\|_{L^2(\mathbb D')} \leq 1$. Then
    \begin{align*}
        |V(r,\theta)| \leq 10 N \exp\left( 2N \varrho\left( \frac{r}{1-\frac1N} \right) \right) \left( 1 - \exp\left( N \varrho\left(  \frac{r}{1-\frac1N} \right) \right) \right)^{-2}
    \end{align*}
        \item Let $W \in C^\infty(\overline{\mathbb D'})$ be a solution to $\Delta W - \lambda W = 0$. Suppose $W$ expands into modified Bessel functions as follows:
        \begin{align*}
            W(r,\theta) = \sum_{k=1}^\infty a_k I_{kN}(\sqrt{\lambda}r) \cos(kN \theta)
        \end{align*}
        Assume $\|W\|_{L^2(\mathbb D')} \leq 1$. Then
        \begin{align*}
            |W(x)| \leq 2\sqrt{N} \left( \frac{r}{1-\frac1N} \right)^N \left( 1 - \left( \frac{r}{1-\frac1N} \right)^N \right)^{-2}
        \end{align*}
        \item Let $W_0 \in C^\infty(\overline{\mathbb D'})$ be given by $W_0(r,\theta) = b_0 I_0(\sqrt{\lambda} r)$ for some $b_0 \in \mathbb R$. Then
        \begin{align*}
            |W_0(x)| \geq I_0(\sqrt{\lambda}) e^{-\sqrt{\lambda}(1-|x|)} |W_0(0)|
        \end{align*}
    \end{enumerate}
\end{lemma}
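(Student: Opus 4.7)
\emph{Overall strategy.} The plan is to handle each of the four parts by the same template: combine a sharp pointwise bound on each Bessel (or modified Bessel) function from \Cref{lem:bounds} with orthogonality of the angular Fourier modes, and then either isolate the single nonzero mode (parts~1 and~4) or apply Cauchy--Schwarz followed by summing a geometric series (parts~2 and~3). Part~(4) is essentially immediate: writing $W_0(x) = W_0(0)\cdot I_0(\sqrt\lambda|x|)$ and rearranging the inequality $I_0(\sqrt\lambda|x|)/I_0(\sqrt\lambda) \geq \exp(-\sqrt\lambda(1-|x|))$ from \Cref{lem:bounds}(1) gives the claim.

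\emph{Part~(1).} \Cref{lem:bounds}(2) gives $|J_N(\sqrt\lambda r)| \leq \exp(N\varrho(\sqrt\lambda r/N))$ whenever $\sqrt\lambda r \leq N$, and the unconditional bound $|J_N|\leq 1$ takes care of $\sqrt\lambda r > N$ (consistent with the convention $\varrho \equiv 0$ on $[1,\infty)$). It therefore suffices to show $|a_N|\leq 5N$, equivalently $\int_0^{1-1/N} r J_N(\sqrt\lambda r)^2\, dr \gtrsim N^{-2}$. Since $\sqrt\lambda(1-1/N)$ lies (barely) past the turning point $N$ of $J_N$, the plan is to apply Lommel's identity
$$\int_0^R r J_N(\alpha r)^2\,dr = \frac{R^2}{2}\left[J_N'(\alpha R)^2 + \left(1 - \frac{N^2}{\alpha^2 R^2}\right) J_N(\alpha R)^2\right]$$
and bound the right-hand side from below by representing $J_N$ via the Bessel modulus and phase functions $J_N = M_N\cos\theta_N$ and using the Wronskian identity $M_N^2\theta_N' = 2/(\pi x)$, in the same spirit as the analogous argument for $J_0$ in \Cref{lem:bound_on_L2norms}. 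This should yield an explicit lower bound of order $N^{-4/3}$, comfortably stronger than needed.

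\emph{Parts~(2) and~(3).} In both, every Bessel function appearing in the series is evaluated below its turning point: $\sqrt\lambda r < \sqrt\lambda(1-1/N) < kN$ for $k \geq 2$ (part~2) or $k \geq 1$ (part~3). For part~(2), \Cref{lem:bounds}(2) gives $|J_{kN}(\sqrt\lambda r)| \leq \exp(kN\varrho(\sqrt\lambda r/(kN)))$, and the elementary inequality $\varrho(y\rho) \leq \varrho(y) + \varrho(\rho)$ for $y,\rho \in (0,1)$ (which follows because $\varrho(y\rho) - \varrho(y) - \varrho(\rho)$ vanishes at $y=1$ and has positive derivative $(\sqrt{1-(y\rho)^2} - \sqrt{1-y^2})/y$ in $y$) cleanly separates the $k$-dependent exponent into $kN\varrho(y_{\max,k}) + kN\varrho(\rho)$ with $y_{\max,k}=\sqrt\lambda(1-1/N)/(kN)$ and $\rho=r/(1-1/N)$. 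A matching lower bound on $g_{kN}^2 := \pi\int_0^{1-1/N} r J_{kN}(\sqrt\lambda r)^2\,dr$ is obtained by restricting the integral to a Laplace-type window of length $\sim 1/(kN\varrho'(y_{\max,k}))$ just below the upper limit and inserting the reverse inequality from \Cref{lem:bounds}(2); this yields $g_{kN}^2 \gtrsim \exp(2kN\varrho(y_{\max,k}))/(\lambda k^2)$, with the $k^2$ denominator coming from $y_{\max,k}^{-2}$. Orthogonality and Cauchy--Schwarz then give $|V(r,\theta)|^2 \leq \sum_{k\geq 2} J_{kN}(\sqrt\lambda r)^2/g_{kN}^2$, which is at most a constant times $N^2 \sum_{k\geq 2} k^2 q^{2k}$ in $q = \exp(N\varrho(\rho))$, and the estimate $\sum_{k\geq 2} k^2 q^{2k} \leq 4 q^4/(1-q)^3$ produces the stated bound. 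Part~(3) follows the same template: the monotonicity of $I_\nu(x)/x^\nu$ in $x$ (immediate from the power series) furnishes $I_{kN}(\sqrt\lambda r) \leq (r/(1-1/N))^{kN} I_{kN}(\sqrt\lambda(1-1/N))$, a matching lower bound on $h_{kN}^2 := \pi\int_0^{1-1/N} r I_{kN}(\sqrt\lambda r)^2\,dr$ comes from using $(1-1/(kN))^{kN}$ being bounded below by an absolute constant, and one sums the series $\sum_{k\geq 1} k p^{2k}$ in $p = (r/(1-1/N))^N$.

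\emph{Main obstacle.} The principal technical challenge lies in Part~(1): because $\sqrt\lambda(1-1/N)$ is only $\mathcal{O}(N^{-2/3})$ past the turning point of $J_N$, the Lommel combination $J_N'^2 + (1-N^2/x^2)J_N^2$ is a delicate balance that can a priori be small if $J_N$ happens to be near a zero there. Producing an explicit universal constant requires careful control of $M_N^2$ in this transitional regime, akin to but more subtle than the $J_0$ analysis in \Cref{lem:bound_on_L2norms}. Everything else reduces to the bookkeeping outlined above once the pointwise bounds from \Cref{lem:bounds} are in hand.
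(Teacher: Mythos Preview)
Your plan for parts (2)--(4) is correct and essentially matches the paper, modulo cosmetic choices: you use Cauchy--Schwarz where the paper bounds each Fourier mode individually via $\|a_{kN}J_{kN}(\sqrt\lambda\,\cdot)\cos(kN\theta)\|_{L^2(\mathbb D')}\leq 1$ and then sums, and you invoke the subadditivity $\varrho(y\rho)\leq\varrho(y)+\varrho(\rho)$ where the paper uses the equivalent monotonicity of $t\mapsto\varrho(tr)-\varrho(tR)$. Both routes land on the same geometric series.

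For part~(1), however, you are making your life much harder than necessary, and the ``main obstacle'' you flag is entirely self-inflicted. The paper does \emph{not} use Lommel's identity at the upper limit $R=1-\tfrac1N$, and so never touches the transition region. Instead it simply throws away the part of the integral above the turning point: since $N/\sqrt\lambda<1-\tfrac1N$, one has
\[
\pi\int_0^{1-\frac1N} rJ_N(\sqrt\lambda r)^2\,dr \;\geq\; \pi\Bigl(\tfrac{N}{\sqrt\lambda}\Bigr)^2\int_0^1 sJ_N(Ns)^2\,ds,
\]
and now the \emph{lower} bound in \Cref{lem:bounds}(2), namely $J_N(Ns)\geq e^{-1/(12N)}(2\pi N)^{-1/2}e^{N\varrho(s)}\geq e^{-1/(12N)}(2\pi N)^{-1/2}s^N$ (using $\varrho(s)\geq\log s$), reduces everything to $\int_0^1 s^{2N+1}\,ds=\tfrac1{2N+2}$. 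This gives the norm lower bound $\gtrsim N^{-2}$ with an explicit constant in two lines, with no modulus--phase analysis and no need to control anything near $x\approx N$. Your Lommel route would eventually work, but extracting the constant $5$ from it requires uniform bounds on $M_N$ in the transitional regime (\`a la Krasikov), which is a substantial detour.
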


\begin{proof}
    \begin{enumerate}
        \item 
       The assumption $\sqrt{\lambda} \geq N + N^{\frac13}$ ensures that $\frac{N}{\sqrt{\lambda}} < 1 - \frac1N$. In the following, we will often simply estimate $\sqrt{\lambda}/N \leq 2$. Now,
        \begin{multline*}
              \int_{\mathbb D'} [J_N(\sqrt{\lambda}r) \cos(N \theta)]^2 r\, dr\, d\theta \geq 
            \pi \int_0^{\frac N{\sqrt{\lambda}}} J_N\left(N \tfrac{\sqrt{\lambda}}{N} r\right)^2 r dr \\
            = \pi \left( \frac{N}{\sqrt{\lambda}} \right)^2 \int_0^1 J_N\left(N s \right)^2 s ds 
            \geq \frac{e^{-\frac1{6N}}}{2N} \left( \frac{N}{\sqrt{\lambda}} \right)^2 \int_0^1 s^{2N+1} ds \geq \frac{1}{25 N^2}.
        \end{multline*}
        The bound on $V(r,\theta) = a_n J_n(\sqrt{\lambda}r) \cos(n\theta)$ now follows from the lower bound in \Cref{lem:bounds} and the assumption $\|V\|_{L^2(\mathbb D')} \leq 1$.
        \item The lower bound $J_n(\sqrt{\lambda} r) \geq e^{-\frac1{12n}} \frac{1}{\sqrt{2 \pi n}} \exp(n \varrho(\frac{\sqrt{\lambda}}{n} r))$ from Lemma~\ref{lem:bounds} lets us estimate
        \begin{align*}
            \int_{\mathbb D'} [J_n(\sqrt{\lambda} r) \cos(n \theta)]^2 r dr d\theta &= \pi \int_0^{1-\frac1N} J_n\left(n \frac{\sqrt{\lambda}}{n} r\right)^2 r dr \\
            &\geq  e^{-\frac1{6n}} \frac{1}{2n} \left(\frac{n}{\sqrt{\lambda}}\right)^2 \int_0^{\tfrac{\sqrt{\lambda}}{n}(1-\frac1N)} \exp(2n \varrho(s)) s ds.
        \end{align*}
        This integral can be estimated using the change of variables $t = \varrho(s)$, keeping in mind that $\frac{e}{2} s \geq e^t = s \exp(\sqrt{1-s^2}-\log(1+\sqrt{1-s^2})) \geq s$:
        \begin{align*}
            e^{-\frac1{6n}} \frac{1}{2n} & \left(\frac{n}{\sqrt{\lambda}}\right)^2 \int_{-\infty}^{\varrho(\frac{\sqrt{\lambda}}{n} (1-\frac1N))} e^{2nt} \left(1-s^2\right)^{-\frac12} s^2 dt \\ &\geq \left(\frac{2}{e}\right)^2 e^{-\frac1{6n}} \frac{1}{2n(2n+2)} \left(\frac{n}{\sqrt{\lambda}}\right)^2 \exp((2n+2) \varrho(\tfrac{\sqrt{\lambda}}{n} (1-\tfrac1N)))) \\
            & \geq \left(\frac{2}{e}\right)^2 e^{-\frac1{6n}} \frac{1}{2n(2n+2)} \left(1-\tfrac1N\right)^2 \exp(2n \varrho(\tfrac{\sqrt{\lambda}}{n} (1-\tfrac1N)))) \\
            & \geq \frac{1}{10 n^2} \exp\left(2n \varrho(\tfrac{\sqrt{\lambda}}{n} (1-\tfrac1N))\right)
        \end{align*}
        Since $\|a_n J_n(\sqrt{\lambda}r) \cos(n\theta)\|_{L^2(\mathbb D')} \leq 1$ for $n=kN$ because
        \[
        \|V\|_{L^2(\mathbb D')}^2=\sum_{k=2}^\infty |a_k|^2 \int_{\mathbb D'} [J_n(\sqrt{\lambda} r) \cos(n \theta)]^2 r dr d\theta,
        \]
        the upper bound in Lemma~\ref{lem:bounds} then implies 
        \begin{align*}
            |a_n J_n(\sqrt{\lambda}r) \cos(n\theta)| \leq \sqrt{10} n \exp\left(n (\varrho(\tfrac{\sqrt{\lambda}}{n} r) - \varrho(\tfrac{\sqrt{\lambda}}{n} (1-\tfrac1N)))\right)
        \end{align*}
        It is easy to check that $\varrho(tr)-\varrho(tR)$ increases in $t$ for any $0 < r < R$ and that under our assumptions on $N$ and $\sqrt{\lambda}$ we have $\frac{\sqrt{\lambda}}{n} (1-\frac1N) \leq 1$ for $n \in \{2N,3N,\ldots\}$, hence 
        \begin{align*}
            |a_{kN} J_{kN}(\sqrt{\lambda}r) \cos(kN\theta)| \leq \sqrt{10} {kN} \exp\left(kN \varrho((1-\tfrac1N)^{-1} r) \right)
        \end{align*}
        Summing up the estimates on the terms in the Bessel expansion of $V$ yields the claimed bound.
        \item Let $\varrho_+(t) = \log(t) + \sqrt{1+t^2} - \log(1+\sqrt{1+t^2})$. From \Cref{lem:bounds} we obtain 
        \begin{align*}
            \int_{\mathbb D'} &[I_n(\sqrt{\lambda} r) \cos(n \theta)]^2 r dr d\theta = \pi \int_0^{1-\frac1N} I_n\left(n \frac{\sqrt{\lambda}}{n} r\right)^2 r dr \\
            \geq \ &\pi (2\pi n)^{-1} \int_0^{1-\frac1N} \exp\left( 2n \varrho_+ \left(\tfrac{\sqrt{\lambda}}{n} r\right) \right) \left( \frac{n+1}n + \left(\frac{\sqrt{\lambda}}{n} r\right)^2 \right)^{-\frac14} rdr \\
            \geq & \ (2 n)^{-1} \left( \frac{51}{10} \right)^{-\frac14} \left(\frac{n}{\sqrt{\lambda}} \right)^2
            \int_0^{\frac{\sqrt{\lambda}}{n}\left(1-\frac1N\right)} \exp\left(2n\varrho_+ \left( s \right)\right) sds
        \end{align*}
        We estimate the integral in the last line using the change of variables $t = \varrho_+ (s)$. Recall that $\frac{dt}{ds} = \frac{\sqrt{1+s^2}}{s}$ and $e^t = s e^{\sqrt{1+s^2} - \log(1 + \sqrt{1+s^2})}$. It follows that
        \begin{align*}
            &\int_0^{\frac{\sqrt{\lambda}}{n}\left(1-\frac1N\right)} \exp\left(2n\varrho_+ \left( s \right)\right) sds \\ =
            \ &\int_{-\infty}^{\varrho_+\left( \frac{\sqrt{\lambda}}{n}\left(1-\frac1N\right) \right)} \exp((2n+2) t) \frac{e^{2 \log(1 + \sqrt{1+s^2}) - 2 \sqrt{1+s^2}}}{\sqrt{1+s^2}} dt \\
            \geq \ & \frac{e^{2 \log(1 + \sqrt{1+s^2}) - 2 \sqrt{1+s^2}}}{\sqrt{1+s^2}}\bigg|_{s = \frac{\sqrt{\lambda}}{n}\left(1-\frac1N\right)} \frac{1}{2n+2} \exp\left((2n+2) \varrho_+ \left( \frac{\sqrt{\lambda}}{n} \left( 1 - \frac1N \right)\right) \right)
            \\ \geq \ & \frac{1}{\sqrt{5}} \left( \frac{\sqrt{\lambda}}{n}\right)^2 \left(1-\frac1N\right)^2 \frac{1}{2n+2} \exp\left(2n \varrho_+ \left(\frac{\sqrt{\lambda}}{n} \left( 1 - \frac1N \right)\right)\right)
        \end{align*}
    Simplifying the constants using the assumption $n \geq N \geq 10$ and $\sqrt{\lambda}/n \leq 2$ yields
    \begin{align*}
        \int_{\mathbb D} [I_n(\sqrt{\lambda} r) \cos(n \theta)]^2 r dr d\theta \geq \frac1{20 n^2} \exp\left(2n \varrho_+ \left(\frac{\sqrt{\lambda}}{n} \left( 1 - \frac1N \right)\right)\right)
    \end{align*}
    Thus, just as in (2), the fact that $\|a_n I_n(\sqrt{\lambda}r) \cos(n \theta)\|_{L^2(\mathbb D')} \leq 1$ with $n=kN$ and the upper bound in Lemma~\ref{lem:bounds} imply 
    \begin{align*}
        |a_n I_n(\sqrt{\lambda}r) \cos(n \theta)| &\leq \frac{\sqrt{20}n e^{\frac{1}{6n}}}{\sqrt{2\pi n}} \exp\left(n \varrho_+ \left(\frac{\sqrt{\lambda}}{n} r\right) - n \varrho_+ \left(\frac{\sqrt{\lambda}}{n} \left( 1 - \frac1N \right)\right) \right) \\
        &\leq 2\sqrt{n} \left( \frac{r}{1-\frac1N} \right)^n.
    \end{align*}
    The claimed bound now follows by summing up the bounds as
    \begin{align*}
        \bigg|\sum_{k=1}^\infty & a_{kN} I_{kN}(\sqrt{\lambda}r) \cos(kN \theta)\bigg| \leq2\sqrt N\sum_{k=1}^\infty \sqrt{k} \left( \frac{r}{1-\frac1N} \right)^{kN}\\
        &\leq2\sqrt N\sum_{k=1}^\infty k \left( \frac{r}{1-\frac1N} \right)^{kN}=2\sqrt{N} \left( \frac{r}{1-\frac1N} \right)^N \left( 1 - \left( \frac{r}{1-\frac1N} \right)^N \right)^{-2}.
    \end{align*}
    \item This bound follows directly from Lemma~\ref{lem:bounds}.
    \end{enumerate}
\end{proof}

\section{Estimates for derivatives of eigenvalues and eigenfunctions}

\subsection{Abstract estimates}
This section is dedicated to obtaining explicit derivative estimates on the eigenvalues and eigenfunctions of a spectral problem varying along a finite number of parameters. 

The underlying setup is an adaptation of Kato's operator families of type II. Since the application we have in mind is the clamped plate problem on a smoothly bounded domain, we will exploit its peculiarities quite strongly in order to increase readability of the presentation. However, it will be clear from their proof that it would be straightforward, although perhaps tedious, to apply the estimates obtained below to a variety of elliptic boundary value problems.
    
    \begin{lemma}
    \label{lem:abstractEstimates}
    Let $X$ and $H$ be real Hilbert spaces, together with a compact inclusion $\iota: X \to H$. 
    Let $A_{(\cdot)} \in C^s([-1,1]^m, \mathcal{L}(X,H))$ and $B_{(\cdot)} \in C^s([-1,1]^m, \mathcal{L}(H,H^\ast))$ for some $m,s \in \mathbb N$. Assume $B_t^\ast = B_t$ for all $t \in [-1,1]^m$, $\left\langle B_0 (\cdot) , \cdot \right\rangle = (\cdot , \cdot )_H$ and $\|\!\cdot\!\|_X = \|A_0 (\cdot) \|_H$.
    
    Consider $\lambda_0 \geq 0$ such that $\ker( \lambda_0^2 \iota^\ast B_0 \iota - A_0^\ast B_0 A_0 )$ is spanned by a single vector $u_0 \in X$, which we normalize to satisfy $\|\iota u_0\|_H = 1$. Let $\tau$ be such that $\ker(\xi \iota^\ast B_0 \iota - A_0^\ast B_0 A_0) = \{0\}$ for all $\xi \in (\lambda_0^2 - \tau,\lambda_0^2 + \tau) \setminus \{\lambda^2\}$. 

    There exists $\varepsilon > 0$ such that $\lambda_0$ and $u_0$ extend uniquely to functions $\lambda_{(\cdot)} \in C^s([-\varepsilon,\varepsilon]^m)$ and $u_{(\cdot)} \in C^s([-\varepsilon,\varepsilon]^m, X)$ satisfying $(\lambda_t^2 \iota^\ast B_t \iota - A_t^\ast B_t A_t) u_t = 0$ and $\left\langle B_t u_t, u_t \right\rangle = 1$ for all $t \in [-\varepsilon,\varepsilon]^m$. At $t=0$,
    \begin{align}
        \label{eq:firstDerivativeEstimate}
        \sum_{j=1}^m |\partial_{t_j} (\lambda_t^2)| \leq 2 \lambda_0^2 \sum_{j=1}^m (\| \partial_{t_j} A_t \|_{X \to H} + \| \partial_{t_j} B_t \|_{H \to H^\ast}).
    \end{align}
    Let $\rho > 0$ be small enough so that at $t=0$,
    \begin{align}
        \label{eq:abstractConditionA}
        \sum_{|\alpha| \leq s} \frac{(2\rho)^{|\alpha|}}{\alpha!} \|\partial_t^\alpha (A_t-A_0) \|_{X \to H} \leq \frac{\tau}{20(\lambda_0^2 + \tau)} \\
        \label{eq:abstractConditionB}
        \sum_{|\alpha| \leq s} \frac{(2\rho)^{|\alpha|}}{\alpha!} \|\partial_t^\alpha (B_t-B_0) \|_{H \to H^\ast} \leq \frac{\tau}{50(\lambda_0^2 + \tau)}
    \end{align}
    Then the following estimates hold at $t=0$:
    \begin{align}
        \sum_{|\alpha| \leq s} \frac{\rho^{|\alpha|}}{\alpha!} |\partial_t^\alpha (\lambda_t^2 - \lambda_0^2)| & \leq 2\tau, &
        \sum_{|\alpha| \leq s} \frac{\rho^{|\alpha|}}{\alpha!} \|\partial_t^\alpha u_t\|_X & \leq 2(\lambda_0^2 + \tau)^{\frac12}, &
        \sum_{|\alpha| \leq s} \frac{\rho^{|\alpha|}}{\alpha!} \|\iota \partial_t^\alpha u_t\|_H & \leq 2.
    \end{align}
\end{lemma}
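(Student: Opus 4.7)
The plan is to organize the proof into three stages: existence of $(\lambda_t,u_t)$ by the implicit function theorem together with the first-order eigenvalue identity, and then the quantitative higher-order bounds via a majorant-norm contraction.

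For existence and $C^s$-regularity I would apply the implicit function theorem to
\[
F(\mu,u,t) := \bigl((\mu\,\iota^{\ast}B_t\iota - A_t^{\ast} B_t A_t)u,\ \langle B_t u,u\rangle - 1\bigr) \in X^{\ast}\oplus\mathbb{R},
\]
which vanishes at $(\lambda_0^2,u_0,0)$. The operator $L_0 := \lambda_0^2\,\iota^{\ast}B_0\iota - A_0^{\ast}B_0 A_0$ is self-adjoint as a bilinear form on $X$ (using $B_0^{\ast}=B_0$) and is Fredholm of index zero, because $A_0^{\ast}B_0 A_0$ is an isomorphism $X\to X^{\ast}$ (since $\|\!\cdot\!\|_X = \|A_0\cdot\|_H$ and $B_0$ is the Riesz isomorphism of $H$) while $\iota^{\ast}B_0\iota$ is compact. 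The spectral-gap hypothesis then makes $D_{(\mu,u)}F$ an isomorphism at the base point via Lyapunov-Schmidt: the normalization component pins the $u_0$-direction of $\dot u$ via $2\langle B_0 u_0,\cdot\rangle$, and the solvability condition $\langle\cdot,u_0\rangle=0$ determines $\dot\mu$. For the first derivative bound \eqref{eq:firstDerivativeEstimate}, I would differentiate $L_t u_t = 0$ in $\partial_{t_j}$ at $t=0$, pair with $u_0$, and use self-adjointness of $L_0$ to eliminate $\partial_{t_j}u$; testing $L_0 u_0 = 0$ against $u_0$ gives $\|A_0 u_0\|_H^2 = \lambda_0^2$, from which the identity for $\partial_{t_j}(\lambda_t^2)$ is bounded by $2\lambda_0^2(\|\partial_{t_j}A\|_{X\to H} + \|\partial_{t_j}B\|_{H\to H^{\ast}})$.

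For the weighted $C^s$-bounds under \eqref{eq:abstractConditionA}-\eqref{eq:abstractConditionB}, the decisive tool is the majorant seminorm
\[
\|f\|_{\rho} := \sum_{|\alpha|\leq s}\frac{\rho^{|\alpha|}}{\alpha!}\|\partial^{\alpha}f(0)\|,
\]
which is sub-multiplicative whenever the target is a Banach algebra and which enjoys a Cauchy-type geometric-series relation between $\|\cdot\|_\rho$ and $\|\cdot\|_{2\rho}$. This is exactly the factor-of-two gap between the hypotheses (stated at $2\rho$) and the conclusions (stated at $\rho$): estimating a truncated power series at radius $\rho$ against its coefficients at radius $2\rho$ absorbs the combinatorial overhead into a geometric factor bounded by $2$. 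I would recast $\{L_t u_t=0,\ \langle B_t u_t, u_t\rangle = 1\}$ as a fixed-point equation
\[
(\Lambda_t, U_t) = \Phi(\Lambda_t, U_t;\ A_t - A_0,\ B_t - B_0),\qquad \Lambda_t := \lambda_t^2 - \lambda_0^2,\ U_t := u_t - u_0,
\]
with $\Phi$ polynomial in its arguments and coefficients involving only $\lambda_0^2$ and the reduced resolvent $R_0$ of $L_0\big|_{u_0^{\perp}}$, whose operator norm is controlled by $(\lambda_0^2+\tau)/\tau$ via the spectral-gap hypothesis. Sub-multiplicativity of $\|\cdot\|_{\rho}$ then converts the smallness of $A_t - A_0$ and $B_t - B_0$ encoded by \eqref{eq:abstractConditionA}-\eqref{eq:abstractConditionB} into a contraction estimate for $\Phi$ on the $\|\cdot\|_{\rho}$-ball of the asserted radius, and the fixed point supplies the bound.

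The main obstacle is this third step, and specifically the bookkeeping: the nonlinear map $\Phi$ must be set up as an honest polynomial in $(\Lambda_t, U_t, A_t-A_0, B_t-B_0)$ so that sub-multiplicativity of $\|\cdot\|_{\rho}$ applies term by term without any hidden division by small quantities, and the asymmetric roles of $A_t$ (entering quadratically through $A_t^{\ast}B_t A_t$) and $B_t$ (linear on each side) must be tracked so that the different prefactors $20$ versus $50$ appearing in \eqref{eq:abstractConditionA}-\eqref{eq:abstractConditionB} emerge rather than worse numerical constants. Once this accounting is done, each individual summand in $\Phi$ reduces to an elementary geometric-series estimate in the majorant-norm algebra, and combining them gives the three conclusions with the claimed constants $2\tau$, $2(\lambda_0^2+\tau)^{1/2}$, and $2$.
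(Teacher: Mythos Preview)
Your outline captures the existence step and the first-derivative identity correctly; the Hellmann--Feynman pairing to eliminate $\partial_{t_j}u$ is exactly what the paper does. The higher-order argument, however, is organized differently. The paper does \emph{not} set up a single fixed-point map $\Phi$ in the majorant norm. Instead it introduces three coupled scalar sequences
\[
x_k(\rho)=1+\tfrac{1}{\lambda^2+\tau}\sum_{1\le|\alpha|\le k}\tfrac{\rho^{|\alpha|}}{\alpha!}|\partial^\alpha(\lambda^2)|,\quad
y_k(\rho)=1+\tfrac{1}{\sqrt{\lambda^2+\tau}}\sum\tfrac{\rho^{|\alpha|}}{\alpha!}\|\partial^\alpha u\|_X,\quad
z_k(\rho)=1+\sum\tfrac{\rho^{|\alpha|}}{\alpha!}\|\iota\partial^\alpha u\|_H,
\]
derives recursive inequalities $x_{k+1}\le f_1(x_k,y_k,z_k)$, $y_{k+1},z_{k+1}\le f_2(x_k,y_k,z_k)$ by differentiating the eigenvalue equation and the normalization $\langle B_t u_t,u_t\rangle=1$ separately, and then closes by a one-dimensional bootstrap showing the iteration stays inside $\{x\le\tfrac75,\,y,z\le 2\}$. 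The factor $2\rho$ versus $\rho$ arises concretely from the inequality $|\alpha|\le\tfrac12 2^{|\alpha|}$, which converts the extra combinatorial weight $|\alpha|$ (coming from $\sum_j\partial_j$) in the recursion into the doubled radius.

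The substantive gap in your plan is the $H$-norm conclusion $\sum\tfrac{\rho^{|\alpha|}}{\alpha!}\|\iota\partial^\alpha u\|_H\le 2$. A contraction for $(\Lambda_t,U_t)$ in a single majorant norm built on $X$ will at best produce $\|\iota\partial^\alpha u\|_H\le\|\partial^\alpha u\|_X$, losing a factor $(\lambda_0^2+\tau)^{1/2}$. The paper obtains the sharp $H$-bound by proving four distinct resolvent estimates (their claims (i)--(iv)), two of which exploit that when the right-hand side is of the form $\iota^\ast B w$ one gains $(\lambda^2+\tau)^{1/2}$ in the $H$-norm of the solution; this gain is then fed back into the Hellmann--Feynman recursion for $x_{k+1}$ and keeps the constants under control. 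The paper even remarks that this is precisely where its bootstrap improves on complex-analytic (i.e.\ majorant/contraction) methods. So your scheme would need to be run with two norms on $U_t$ simultaneously, tracking the interplay between them through the resolvent, before it can recover all three stated inequalities with the constants $2\tau$, $2(\lambda_0^2+\tau)^{1/2}$, $2$.
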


In our application, the Hilbert spaces we shall consider will be $X=H^2_{00}(\mathbb D)$ and $H=L^2(\mathbb D)$; $A_t$ will be Laplacian defined by a family of metrics parametrized by~$t$, and $B_t$ will be the multiplication by a scalar function. The proof of \Cref{lem:abstractEstimates} consists of a bootstrap argument resting on the Hellmann--Feynman formula, by which one can express the $k^{th}$ derivative of the eigenvalue using only lesser derivatives of the eigenfunctions. Compared to complex-analytic methods, this approach has the benefit of yielding improved estimates ---sharp up to a constant--- on the derivatives of the eigenfunction when measured in $H$.

\begin{proof}
    In the notation of this proof, we will suppress $t=0$ consistently, e.g.\ writing $A$ for $A_0$. We will also write $\partial^\alpha u$ for $\partial_t^\alpha|_{t = 0} u_t$, $\partial^\alpha A$ for $\partial_t^\alpha|_{t = 0} A_t$ and so on.

    Let us recall a few basic functional analytic properties that will be crucial in what follows. The assumption $\| A u \|_{H} = \|u\|_X$ implies that the map $A^\ast B A:X\to X^*$ is bounded below, hence injective with closed range. Since $A^\ast B A$ is its own adjoint, it is also surjective.
    Now, $\iota (A^\ast B A)^{-1} \iota^\ast B$ is a positive, compact, self-adjoint operator on $H$. Its spectrum thus consists of discrete eigenvalues $(\mu_j)_{j=1}^\infty \subseteq (0,\infty)$ to which one may associate a corresponding orthonormal eigenbasis $(w_j)_{j=1}^\infty$. Let $\lambda_j^2 := \mu_j^{-1}$ and $u_j = \mu_j^{-1} (A^\ast B A)^{-1} \iota^\ast B w_j$, so that $(\lambda_j^2 \iota^\ast B \iota - A^\ast B A) u_j = 0$ and $(\iota u_j)_{j=1}^\infty = (w_j)_{j=1}^\infty$ forms an orthonormal basis of $H$. Furthermore, $\left( A u_j, A u_k \right)_H = \delta_{jk} \lambda_j^2$, so $(\lambda_j^{-1} A u_j)_{j=1}^\infty$ forms an orthonormal basis of $\mathrm{Ran}(A)$. Thus, $\|v\|_X^2 = \sum_{j=1}^\infty \lambda_j^{-2} (A v, A u_j)_H^2 = \sum_{j=1}^\infty \lambda_j^2 (\iota v, \iota u_j)_H^2$ for every $v \in X$, which implies that $\|w\|_{X^\ast}^2 = \sum_{j=1}^\infty \lambda_j^{-2} \left\langle w, u_j \right\rangle^2$ for every $w \in X^\ast$. Hence, the sum $\sum_{j=1}^\infty (\iota v,\iota u_j)_H u_j$ converges strongly in $X$ to $v$, and for every $w \in X^\ast$, $\sum_{j=1}^\infty \left\langle w, u_j\right\rangle \iota^\ast \iota u_j$ converges strongly in $X^\ast$ to $w$. Thus, the operators $P_{u^\bot}^X$, $P_{u^\bot}^H$ and $P_{u^\bot}^{X^\ast}$ given by 
    \begin{align*}
        P_{u^\bot}^X v &= \sum_{\lambda_j \neq \lambda} (\iota v, \iota u_j)_H u_j, & P_{u^\bot}^H y &= \sum_{\lambda_j \neq \lambda} (y, \iota u_j)_H \iota u_j, & P_{u^\bot}^{X^\ast} w &= \sum_{\lambda_j \neq \lambda} \left\langle w, u_j\right\rangle_{X^\ast} \iota^\ast B \iota u_j,
    \end{align*}
    are projections of norm $1$ which intertwine $\iota$, $\iota^\ast$ and the operator of interest: $\iota P_{u^\bot}^X = P_{u^\bot}^H \iota$, $\iota^\ast B P_{u^\bot}^{H} = P_{u^\bot}^{X^\ast} \iota^\ast$ and $P_{u^\bot}^{X^\ast} (\mu^2 \iota^\ast B \iota - A^\ast B A) = (\mu^2 \iota^\ast B \iota - A^\ast B A) P_{u^\bot}^{X}$. We will also adopt the notation $P_u^X = \mathds 1_X - P_{u^\bot}^{X}$ for the projection onto $u$.
    
    Next, we will prove the following four claims.
    \begin{enumerate}[label = (\roman*)]
        \item For $v \in X$, $\|P_{u^\bot}^X v\|_X \leq \frac{\lambda^2 + \tau}{\tau} \|(\lambda^2 \iota^\ast B \iota - A^\ast B A) v\|_{X^\ast}$.
        \item For $v \in X$, $\|\iota P_{u^\bot}^X v\|_H \leq \frac{\sqrt{\lambda^2 + \tau}}{\tau} \|(\lambda^2 \iota^\ast B \iota - A^\ast B A) v\|_{X^\ast}$.
        \item If $v \in X$ and $(\lambda^2 \iota^\ast B \iota - A^\ast B A) v = \iota^\ast B w$ for some $w \in H$, then $\|\iota P_{u^\bot}^X v\|_H \leq \frac{1}{\tau} \|w\|_{H}$.
        \item If $v \in X$ and $(\lambda^2 \iota^\ast B \iota - A^\ast B A) v = \iota^\ast B w$ for some $w \in H$, then $\|P_{u^\bot}^X v\|_X \leq \frac{\sqrt{\lambda^2 + \tau}}{\tau}  \|w\|_{H}$.
    \end{enumerate}
    To prove the first two claims, let $\tilde v = \sum_{\lambda_j \neq \lambda} \frac{|\lambda_j^2 - \lambda^2|}{\lambda^2 - \lambda_j^2} (\iota v, \iota u_j)_H u_j$. Then
    \begin{align*}
        \|(\lambda^2 \iota^\ast B \iota - A^\ast B A) v\|_{X^\ast} &\geq \|\tilde v \|_X^{-1} \left\langle (\lambda^2 \iota^\ast B \iota - A^\ast B A) v, \tilde v \right\rangle \\
        &= \|\tilde v \|_X^{-1} \left( \lambda^2 (\iota v, \iota \tilde v)_H - (A v, A \tilde v)_H \right)\\
        &= \|\tilde v \|_X^{-1} \sum_{\lambda_j \neq \lambda} (\iota v, \iota u_j)_H^2 |\lambda_j^2 - \lambda^2|.
    \end{align*}
    Now we use that $\|\tilde v \|_X^2 = \| P_{u^\bot}^X v \|_X^2 = \sum_{\lambda_j \neq \lambda} (\iota v, \iota u_j)_H^2 \lambda_j^2$ to obtain
    \begin{align*}
        \|(\lambda^2 \iota^\ast B \iota - A^\ast B A) v\|_{X^\ast} &\geq \| \tilde v \|_X^{-1} \left(\inf_{\lambda_i \neq \lambda} \frac{|\lambda_i^2 - \lambda^2|}{\lambda_i^2}\right) \sum_{\lambda_j \neq \lambda} (\iota v, \iota u_j)^2 \lambda_j^2 \geq \frac{\tau}{\lambda^2 + \tau} \|P_{u^\bot}^X v\|_X,
    \end{align*}
    establishing claim (i). Claim (ii) now follows, keeping in mind that $\|\tilde v \|_X = \| P_{u^\bot}^X v \|_X$:
    \begin{align*}
        \|(\lambda^2 \iota^\ast B \iota - A^\ast B A) v\|_{X^\ast}^2 &\geq \frac{\tau}{\lambda^2 + \tau} \|(\lambda^2 \iota^\ast B \iota - A^\ast B A) v\|_{X^\ast} \|\tilde v \|_X \\
        & \geq \frac{\tau}{\lambda^2 + \tau} \left( \inf_{\lambda_i \neq \lambda} |\lambda_i^2 - \lambda^2| \right) \sum_{\lambda_j \neq \lambda} (\iota v, \iota u_j)^2 
        = \frac{\tau^2}{\lambda^2 + \tau} \| \iota P_{u^\bot}^X v \|_H^2.
    \end{align*}
    For claim (iii), note that $A^\ast B A v = A^\ast B A \sum_{j=1}^\infty (\iota v, \iota u_j)_H u_j = \sum_{j=1}^\infty \lambda_j^2 (\iota v, \iota u_j)_H \iota^\ast B \iota u_j$, so
    \begin{align*}
        (w,\iota u_k)_H &= \left\langle (\lambda^2 \iota^\ast B \iota - A^\ast B A) v , u_k \right\rangle = \left\langle \sum_{j=1}^\infty (\iota v, \iota u_j)_H (\lambda^2 - \lambda_j^2) \iota^\ast B \iota u_j, u_k \right\rangle \\
        &= (\iota v, \iota u_k)_H (\lambda^2 - \lambda_k^2),
    \end{align*}
    which, summing up the squares, implies $\|w\|_H \geq \tau \|\iota P_{u^\bot}^X v\|_H$. Claim (iv) follows similarly:%\textcolor{blue}{a square was missing in the inf}
    \begin{align*}
        \|w\|_H^2 = \sum_{j=1}^\infty (w,\iota u_k)_H^2 \geq \left( \inf_{\lambda_i \neq \lambda} \frac{|\lambda^2 - \lambda_i^2|}{\lambda_i} \right)^2 \sum_{j=1}^\infty \lambda_j^2 (\iota v,\iota u_k)_H^2 \geq \frac{\tau^2}{\lambda^2 + \tau} \|P^X_{u^\bot} v\|_X^2
    \end{align*}
    
    We make the following definitions: For $0 \leq k \leq s$,
    \begin{align*}
        a_k(\rho) &= \frac{\lambda^2 + \tau}{\tau} \sum_{|\alpha| \leq k} \frac{|\alpha|}{\alpha!}\rho^{|\alpha|} \|\partial^\alpha (A^\ast B A) \|_{X \to X^\ast}, & b_k(\rho) &= \frac{\lambda^2 + \tau}{\tau} \sum_{|\alpha| \leq k} \frac{|\alpha|}{\alpha!}\rho^{|\alpha|} \|\partial^\alpha B\|_{H \to H^\ast}, \\
        x_k(\rho) &= 1 + \frac{1}{\lambda^2+\tau} \sum_{1 \leq |\alpha| \leq k} \frac{\rho^{|\alpha|}}{\alpha!} | \partial^\alpha (\lambda^2) |, & y_k(\rho) &= 1 + \frac{1}{\sqrt{\lambda^2+\tau}} \sum_{1 \leq |\alpha| \leq k} \frac{\rho^{|\alpha|}}{\alpha!} \|\partial^\alpha u\|_{X}, \\
        z_k(\rho) &= 1 + \sum_{1 \leq |\alpha| \leq k} \frac{\rho^{|\alpha|}}{\alpha!} \|\iota (\partial^\alpha u) \|_{H}
    \end{align*}
    These quantities are set up to satisfy the following recursive inequalities:
        \begin{align}
            \label{eq:recursion_x}\tag{v}
            x_{k+1}(\rho) &\leq 1 + \frac{\tau}{\lambda^2 + \tau} \left( a_{k+1}(\rho) y_k(\rho)^2 + b_{k+1}(\rho) x_k(\rho) z_k(\rho)^2 \right), \\
            \label{eq:recursion_y}\tag{vi}
        \begin{split}
        y_{k+1}(\rho) &\leq x_{k+1}(\rho) b_{k+1}(\rho) z_k(\rho) + a_{k+1}(\rho) y_k(\rho) + a_{k+1}(\rho) y_k(\rho)^2 z_k(\rho) + b_{k+1}(\rho) x_k(\rho) z_k(\rho)^3 \\ &+ \frac12 b_{k+1}(\rho) z_k(\rho)^2 + \frac12 (z_k(\rho)-1)^2 + 1,
        \end{split} \\
            \label{eq:recursion_y}\tag{vii}
        \begin{split}
        z_{k+1}(\rho) &\leq x_{k+1}(\rho) b_{k+1}(\rho) z_k(\rho) + a_{k+1}(\rho) y_k(\rho) + a_{k+1}(\rho) y_k(\rho)^2 z_k(\rho) + b_{k+1}(\rho) x_k(\rho) z_k(\rho)^3 \\ &+ \frac12 b_{k+1}(\rho) z_k(\rho)^2 + \frac12 (z_k(\rho)-1)^2 + 1,
        \end{split} 
        \end{align}
    Claim (v) follows from the Hellmann--Feynman formula, which in this setting takes the form
    \begin{align*}
         \partial_j (\lambda^2) = \left\langle \partial_j (A^* B A) u, u\right\rangle - \lambda^2 \left\langle (\partial_j B) \iota u, \iota u\right\rangle .
    \end{align*}
    Indeed, since $\left\langle \iota^\ast B_t \iota u_t, u_t\right\rangle \equiv 1$ by assumption, $\lambda_t^2 = \left\langle A_t^\ast B_t A_t u_t, u_t \right\rangle$, so
    \begin{align*}
        \partial_{j}(\lambda^2) &= \left\langle \partial_{j}(A^\ast BA) u, u \right\rangle + 2 \left\langle A^\ast B A u, \partial_j u \right\rangle \\
        &= \left\langle \partial_{j}(A^\ast BA) u, u \right\rangle + 2 \lambda^2 \left\langle B \iota u, \iota (\partial_j u) \right\rangle \\
        &= \left\langle \partial_{j}(A^\ast BA) u, u \right\rangle - \lambda^2 \left\langle \partial_j B \iota u, \iota u \right\rangle.
    \end{align*}
    Observe that $\sum_{j} \frac{1}{(\alpha-e_j)!} = \frac{|\alpha|}{\alpha!}$. Hence,
    \begin{align*}
        \frac{1}{\tau} &\sum_{|\alpha| \leq k+1} \frac{|\alpha|}{\alpha!}\rho^{|\alpha|} |\partial^\alpha (\lambda^2)| = \frac{1}{\tau} \sum_j \sum_{|\alpha|\leq k} \frac{\rho^{|\alpha|+1}}{\alpha!} |\partial^\alpha \partial_j (\lambda^2)| \\
        \leq \ & \frac{\lambda^2 + \tau}{\tau} \sum_j \sum_{|\alpha|\leq k} \sum_{\beta + \gamma + \eta = \alpha} \frac{\rho^{|\beta|+1}}{\beta!} \|\partial^\beta \partial_j (A^\ast B A)\|_{X \to X^\ast} \frac{1}{\sqrt{\lambda^2+\tau}} \frac{\rho^{|\gamma|}}{\gamma!} \|\partial^\gamma u\|_X \frac{1}{\sqrt{\lambda^2+\tau}} \frac{\rho^{|\eta|}}{\eta!} \|\partial^\eta u\|_X \\
        + \ &\frac{\lambda^2 + \tau}{\tau} \sum_j \sum_{|\alpha|\leq k} \sum_{\beta + \gamma + \eta + \nu = \alpha} \frac{\rho^{|\beta|+1}}{\beta!} \|\partial^\beta \partial_j B\|_{H \to H^\ast} \frac{\rho^{|\gamma|}}{\gamma!} \|\iota \partial^\gamma u\|_H \frac{\rho^{|\eta|}}{\eta!} \|\iota \partial^\eta u\|_H \frac{1}{\lambda^2 + \tau} \frac{\rho^{|\nu|}}{\nu!} |\partial^\nu(\lambda^2)| \\
        \leq \ &\left( \frac{\lambda^2 + \tau}{\tau} \sum_{1 \leq |\alpha|\leq k+1} \frac{|\alpha|}{\alpha!} \rho^{|\alpha|} \|\partial^{\alpha}(A^\ast B A)\|_{X\to X^*} \right) \left( \frac{1}{\sqrt{\lambda^2 + \tau}} \sum_{|\beta|\leq k} \frac{\rho^{|\beta|}}{\beta!} \|\partial^{\beta} u \|_X \right)^2 \\
        + \ &\left( \frac{\lambda^2 + \tau}{\tau} \sum_{1 \leq |\alpha|\leq k+1} \frac{|\alpha|}{\alpha!} \rho^{|\alpha|} \|\partial^{\alpha}B\|_{H \to H^\ast} \right) \left( \sum_{|\beta|\leq k} \frac{\rho^{|\beta|}}{\beta!} \|\iota \partial^{\beta} u \|_H \right)^2 \left(\frac{1}{\lambda^2 + \tau} \sum_{|\gamma|\leq k} \frac{\rho^{|\gamma|}}{\gamma!} |\partial^{\gamma} (\lambda^2) | \right)
    \end{align*}
    Since $|\alpha| \geq 1$, the sum on the left hand side bounds $\frac{\lambda^2+\tau}{\tau}(x_{k+1}(\rho) - 1)$. As $\|u\|_X = \lambda$, $\|u\|_H = 1$, the right hand side can be weakened to yield (v).
    
    To obtain (vi), we first derive a bound on $\|P_{u^\bot}^X \partial^{\alpha} u \|_X$. By the product rule,
    \begin{align*}
        - (\lambda^2 \iota^\ast B \iota - A^\ast B A) \partial^{\alpha} u &=
        \sum_{\substack{\beta + \gamma = \alpha \\ |\gamma| \leq |\alpha|-1}} \binom{\alpha}{\gamma,\beta} \partial^\beta \left( (\lambda^2 \iota^\ast B \iota - A^\ast B A) \right) \partial^{\gamma} u \\
        &= \sum_{\substack{\beta + \gamma +\eta = \alpha \\ |\eta| \leq |\alpha|-1}} \binom{\alpha}{\gamma,\beta,\eta} \partial^\beta (\lambda^2) \iota^\ast \partial^\gamma B \iota \partial^\eta u + \sum_{\substack{\beta + \gamma = \alpha \\ |\gamma| \leq |\alpha|-1}} \binom{\alpha}{\gamma,\beta} \partial^\beta(A^\ast B A) \partial^\gamma u.
    \end{align*}
    We apply $P_{u^\bot}^{X^\ast}$ to both sides, observe that $(\lambda \iota^\ast B \iota - A^\ast B A): \mathrm{Ran}(P_{u^\bot}^{X}) \to \mathrm{Ran}(P_{u^\bot}^{X^\ast})$ is invertible, and invoke (i) and (iv):
    \begin{align*}
        \frac{1}{\sqrt{\lambda^2 + \tau}} \frac{\rho^{|\alpha|}}{\alpha!} \| P_{u^\bot}^X \partial^{\alpha} u \|_{X} &\leq \frac{1}{\sqrt{\lambda^2 + \tau}}\frac{\sqrt{\lambda^2 + \tau}}{\tau} \sum_{\substack{\beta + \gamma +\eta = \alpha \\ |\eta| \leq |\alpha|-1 }} \frac{\rho^{|\beta|}}{\beta!} | \partial^\beta (\lambda^2) | \frac{\rho^{|\gamma|}}{\gamma!} \| \partial^\gamma B \|_{H \to H^\ast} \frac{\rho^{|\eta|}}{\eta!} \| \iota \partial^\eta u \|_{H} \\
        &+ \frac{1}{\sqrt{\lambda^2 + \tau}}\frac{\lambda^2 + \tau}{\tau} \sum_{\substack{\beta + \gamma = \alpha \\ |\gamma| \leq |\alpha|-1}} \frac{\rho^{|\beta|}}{\beta!} \| \partial^\beta(A^\ast B A)\|_{X \to X^\ast} \frac{\rho^{|\gamma|}}{\gamma!} \| \partial^\gamma u \|_X \\
        &\leq \sum_{\substack{\beta + \gamma +\eta = \alpha \\ |\eta| \leq |\alpha|-1 }} \frac{1}{\lambda^2+\tau} \frac{\rho^{|\beta|}}{\beta!} | \partial^\beta (\lambda^2) | \frac{\lambda^2 + \tau}{\tau} \frac{\rho^{|\gamma|}}{\gamma!} \| \partial^\gamma B \|_{H \to H^\ast} \frac{\rho^{|\eta|}}{\eta!} \| \iota \partial^\eta u \|_{H} \\
        &+ \sum_{\substack{\beta + \gamma = \alpha \\ |\gamma| \leq |\alpha|-1}} \frac{\lambda^2 + \tau}{\tau} \frac{\rho^{|\beta|}}{\beta!} \| \partial^\beta(A^\ast B A)\|_{X \to X^\ast} \frac{1}{\sqrt{\lambda^2 + \tau}} \frac{\rho^{|\gamma|}}{\gamma!} \| \partial^\gamma u \|_X \\
    \end{align*}
    Summing over all $\alpha$ with $1 \leq |\alpha| \leq k$, and keeping in mind that $P_{u^\bot}^X u = 0$, this yields
    \begin{align}
        \label{eq:yRecursionPartI}
        \begin{split}
        \frac{1}{\sqrt{\lambda^2 + \tau}} &\sum_{|\alpha| \leq k} \frac{\rho^{|\alpha|}}{\alpha!} \| P_{u^\bot}^X \partial^{\alpha} u \|_{X} \\ &\leq x_{k}(\rho) b_k(\rho) z_{k-1}(\rho) + \frac{\lambda^2 + \tau}{\tau} (x_{k}(\rho)-1) z_{k-1}(\rho) + a_{k}(\rho) y_{k-1}(\rho).
        \end{split}
    \end{align}
    Replacing $\frac{\lambda^2 + \tau}{\tau} (x_{k}(\rho)-1)$ with the recursive estimate (v) yields one half of the terms in (vi). 
    
    The component of $\partial^\alpha u$ parallel to $u$ can be obtained as follows: Since $\left\langle B_t u_t, u_t \right\rangle \equiv 1$,
    \begin{align*}
        (\iota \partial^\alpha u, \iota u)_H = - \frac12 \sum_{\substack{\beta + \gamma + \eta = \alpha \\ |\gamma|,|\eta| < |\alpha|}} \left\langle \partial^\beta B \, \iota \partial^\gamma u, \iota \partial^\eta u \right\rangle,
    \end{align*}
    for any $|\alpha| \geq 1$, and hence 
    \begin{align*}
        \frac{1}{\sqrt{\lambda^2 + \tau}} \frac{\rho^{|\alpha|}}{\alpha!} \|P_u^X \partial^\alpha u\|_X \leq \frac{1}{2} \frac{ \| u \|_X}{\sqrt{\lambda^2 + \tau}} \sum_{\substack{\beta + \gamma + \eta = \alpha \\ |\gamma|,|\eta| < |\alpha|}} \frac{\rho^{|\beta|}}{\beta!}  \| \partial^\beta B \|_{H \to H^\ast} \frac{\rho^{|\gamma|}}{\gamma!} \| \iota \partial^\gamma u \|_H \frac{\rho^{|\eta|}}{\eta!} \| \iota \partial^\eta u \|_H,
    \end{align*}
    Summing over $\alpha$, we find that 
    \begin{align}
        \label{eq:yRecursionPartII}
        1 + \frac{1}{\sqrt{\lambda^2 + \tau}} \sum_{1 \leq |\alpha| \leq k} \frac{\rho^{|\alpha|}}{\alpha!} \|P_u^X \partial^\alpha u\|_X \leq 1 + \frac12 b_k(\rho) z_{k-1}(\rho)^2 + \frac12 (z_{k-1}(\rho)-1)^2.
    \end{align}
    By the triangle inequality, \eqref{eq:yRecursionPartI} and \eqref{eq:yRecursionPartII} and claim (v) together imply claim (vi). Claim (vii) follows completely analogously, using (ii) and (iii) instead of (i) and (iv).

    The recursive inequalities (vi) and (vii) are no longer dependent on $\tau$ or $\lambda$, and the factor $\frac{\tau}{\lambda^2 + \tau}$ appearing in (v) is bounded by $1$. Furthermore, $x_0(\rho) = y_0(\rho) = z_0(\rho) = 1$. We will obtain good estimates on $x_s(\rho)$, $y_s(\rho)$ and $z_s(\rho)$ by a bootstrap argument. Assume $a_k(\rho) \leq A$, $b_k(\rho) \leq B$ for all $1 \leq k \leq s$. Assume furthermore that $y_k(\rho), z_k(\rho) \leq 2$ and $x_k(\rho) \leq \frac75$ for all $k \in \{0,\ell\}$ and some $\ell \in \{0,s-1\}$ (for $\ell = 0$, this is trivially satisfied). We will show that the same conditions then hold for $\ell+1$.
    
    Let $m_\ell$ be recursively defined by $m_0 = 1$, 
    \begin{align*}
        m_{k+1} \leq \left(\frac75 B + A\right)(m_k + m_k^3) + \tfrac 12 B m_k^2 + \tfrac 12 (m_k-1)^2 + 1, \ k \in \{0,\ldots,\ell\}.
    \end{align*}
    If $A \leq \frac1{17}, B \leq \frac1{100}$, then the graph of $f(x) = (\frac75 B + A)(x+x^3) + \frac B2x^2 + \frac12 (x-1)^2 + 1$ intersects $\{y=x\}$ at a point $y_\infty \in (0,2)$, and $y_\infty > f(x) > x$ for all $x \in [1,y_\infty)$. It follows that $m_{k+1} \leq y_\infty < 2$ for all $k \in \{0,\ldots,\ell\}$.

    Our bootstrap assumptions together with (v) and (vi) imply that if $y_k(\rho),z_k(\rho) \leq m_k$, then $y_{k+1}(\rho),z_{k+1}(\rho) \leq m_{k+1}$. Since $y_0(\rho) = z_0(\rho) = 1 = m_0$, we obtain $y_{k+1}(\rho),z_{k+1}(\rho) \leq m_{k+1}$ for all $k \in \{0,\ldots,\ell\}$, hence also $y_{\ell+1}(\rho),z_{\ell+1}(\rho) < 2$. Now, if $A < \frac1{17}$, $B < \frac1{100}$, then (iv) implies $x_{\ell+1}(\rho) < 1 + (A y_\ell(\rho)^2 + B z_\ell(\rho)^2 x_\ell(\rho)) < 1 + (4A + 4\frac75 B) < \frac75$, establishing the bootstrap assumption for $x_{\ell+1}$.
    
    It remains to show that the theorem hypotheses imply $a_s(\rho) < 1/17$ and $b_s(\rho) < 1/100$. That $b_s(\rho) \leq \frac1{100}$ follows immediately from \Cref{eq:abstractConditionB} since $\frac12 2^{|\alpha|} \geq |\alpha|$ for $|\alpha| \geq 1$. Similarly,
    \begin{align*}
        \sum_{\alpha \leq s} \frac{|\alpha|}{\alpha!} \rho^{|\alpha|} \| \partial^\alpha \left( A^\ast B A \right) \|_{X \to X^\ast} \leq \frac12 \sum_{1 \leq \alpha \leq s} \frac{(2\rho)^{|\alpha|}}{\alpha!} \| \partial^\alpha (A^\ast B A) \|_{X \to X^\ast}.
    \end{align*}
    Taking the adjoint is a linear isometry from $\mathcal{L}(X,H)$ to $\mathcal{L}(H^\ast,X^\ast)$. From this and the Leibniz rule it follows that
    \begin{align*}
        \sum_{|\alpha| \leq s} \frac{(2\rho)^{|\alpha|}}{\alpha!} \| \partial^\alpha (A^\ast B A) \|_{X \to X^\ast} \leq \left( \sum_{|\beta| \leq s} \frac{(2\rho)^{|\beta|}}{\beta!} \| \partial^\beta B \|_{H \to H^\ast} \right) \left( \sum_{|\gamma| \leq s} \frac{(2\rho)^{|\gamma|}}{\gamma!} \| \partial^\gamma A \|_{X \to H} \right)^2.
    \end{align*}
    Since $\|A_0\|_{X \to H} = \|B_0\|_{H \to H^\ast} = 1$, we find that
    \begin{align*}
        \sum_{|\alpha| \leq s} \frac{(2\rho)^{|\alpha|}}{\alpha!} \| \partial^\alpha (A^\ast B A) \|_{X \to X^\ast} \leq \left(1 + \frac{\tau}{20(\lambda^2 + \tau)}\right)^2\left(1 + \frac{\tau}{50(\lambda^2 + \tau)}\right) < 1 + \frac{2\tau}{17(\lambda^2 + \tau)}.
    \end{align*}
    In the last estimate, we used that $\frac{\tau}{\lambda^2+\tau} \leq 1$. Subtracting $1 = \|A_0^\ast B_0 A_0 \|_{X \to X^\ast}$, we find that $a_s(\rho) \leq \frac{1}{17}$ as desired.
\end{proof}

\subsection{Concrete estimates.} We specify the setup from \Cref{lem:abstractEstimates} to the clamped plate problem as follows. Let $D \subseteq \mathbb R^2$ be a smoothly bounded domain. We endow the spaces $H^2_{00}(D)$ and $H^1_{0}(D; \mathbb R^2)$ with norms given by
\begin{align*}
    \|u\|_{H^2_{00}(D)}^2 &= \sum_{j,k = 1}^2 \|\partial_j\partial_k u \|^2_{L^2(D)}, \\
    \|u\|_{H^1_{0}(D)}^2 &= \sum_{j = 1}^2 \|\partial_j u \|^2_{L^2(D)}, \\
    \|v\|_{H^1_{0}(D;\mathbb R^{2})}^2 &= \sum_{k = 1}^2 \|v_k \|^2_{H^1_0(D)}
\end{align*}
To apply \Cref{lem:abstractEstimates} to a family of Laplace operators on $D$, we require derivative estimates for $\Delta_{g_t}$, where $g_t$ is a parametric family of continously differentiable metrics on $D$. Let $\mathbb R_{\mathrm{sym}}^{2\times 2}$ denote the space of symmetric $2\times 2$ matrices. We endow the spaces $C^s(D)$ and $C^s(D,\mathbb R_{\mathrm{sym}}^{2\times 2})$ with the norms
\begin{align*}
    \|f\|_{C^1(D)} &= \|f\|_{C^0(D)} + \sum_{j=1}^2 \|\partial_j f\|_{C^0(D)}, \\
    \|h\|_{C^1(D, \mathbb R^{2\times 2}_{\mathrm{sym}})} &= \||h|_{2\to2}\|_{C^0(D)} + \sum_{j=1}^2 \||\partial_j h|_{2\to2}\|_{C^0(D)}.
\end{align*}
Here, $|h|_{2\to2}$ denotes the operator norm on $\mathbb R^2$ arising from the standard Euclidean norm.

\begin{lemma}
\label{lem:derivativeEstimatesForLaplacian}
Let $D \subseteq \mathbb R^2$ be a smoothly bounded domain contained in the disk of radius $2$.
To each $g \in C^1(D;\mathbb R_{\mathrm{sym}}^{2\times 2})$ with $\det(g)$ uniformly bounded away from zero, we associate the bilinear form $B_g(u,v) = \int_{D} \det(g)^{\frac12} uv$ and the differential operator 
    \begin{align*}
        \Delta_g u = \det(g)^{-\frac12} \divg \left( \det(g)^{\frac12} g^{-1} \cdot \nabla u \right)
    \end{align*}
Let $g_{(\cdot)} \in C^s((-1,1)^m,C^1(D;\mathbb R_{\mathrm{sym}}^{2\times 2}))$. Assume that $\rho > 0$ and $\delta \in (0,\frac{1}{100})$ are such that at $t=0$,
\begin{align*}
    \sum_{|\alpha| \leq s} \frac{\rho^{|\alpha|}}{\alpha!} \|\partial_t^\alpha (g_t - \mathds 1)\|_{C^1(D, \mathbb R^{2\times 2})} < \delta
\end{align*}
Then the estimates
    \begin{align}
        \label{eq:LipschitzConstantEstimateLaplacian}
        \sum_{|\alpha| \leq s} \frac{\rho^{|\alpha|}}{\alpha!} \| \partial_t^\alpha( \Delta_{g_t} - \Delta_{\mathds 1} )  \|_{H^2_{00}(D) \to L^2(D)} &\leq 6.23 \delta. \\
        \label{eq:LipschitzConstantEstimateVolumeForm}
        \sum_{|\alpha| \leq s} \frac{\rho^{|\alpha|}}{\alpha!} \| \partial_t^\alpha( B_{g_t} - B_{\mathds 1} )\|_{L^2(D) \to L^2(D)^\ast} &\leq 1.04 \delta
    \end{align}
    hold at $t=0$.
\end{lemma}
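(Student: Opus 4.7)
The plan is to expand $\Delta_g$ as a second-order differential operator on $D$, isolate $\Delta_g - \Delta_{\mathds 1}$ into a sum whose coefficients are polynomial (or, after Taylor expansion, analytic) in $h := g - \mathds 1$ and $\nabla h$, and then bound each piece as an operator from $H^2_{00}(D)$ to $L^2(D)$ with explicit constants. Concretely, using
\begin{equation*}
\Delta_g u \;=\; g^{jk}\partial_j\partial_k u \;+\; (\partial_j g^{jk})\partial_k u \;+\; \det(g)^{-1/2}\,\partial_j\!\bigl(\det(g)^{1/2}\bigr)\,g^{jk}\partial_k u,
\end{equation*}
the principal part of $\Delta_g - \Delta_{\mathds 1}$ contributes $(g^{jk}-\delta^{jk})\partial_j\partial_k u$, while the first-order part consists of terms whose coefficients are smooth functions of $h$ and $\nabla h$ vanishing at $h = 0$.

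First, I would Taylor-expand $g^{-1}$, $\det(g)^{1/2}$ and $\det(g)^{-1/2}$ around the identity as convergent series in the entries of $h$, using the two-dimensional identity $\det(\mathds 1 + h) = 1 + \operatorname{tr} h + \det h$. Since $\|h\|_{C^1} < \delta < 10^{-2}$, the geometric-type majorants for these series are within a few percent of $1$, and the $C^0$ norms of $g^{-1} - \mathds 1$, $\det(g)^{1/2} - 1$, $\det(g)^{-1/2} - 1$, together with the $C^0$ norms of their first spatial derivatives, are bounded by linear functions of $\|h\|_{C^1}$ whose constants are explicit and close to their linearizations. Next, applying the Leibniz rule to $\partial_t^\alpha$ of each factor and using the sub-multiplicativity of the generating-function-style weighted sums $\sum_{|\alpha|\le s}\rho^{|\alpha|}\|\cdot\|/\alpha!$ (the analogue of the Cauchy product for formal power series), every product $F(h)\cdot G(\nabla h)$ contributes, after weighting and summing, at most the product of the corresponding sums, each of which is controlled by the hypothesis $\sum_{|\alpha|\le s}\rho^{|\alpha|}\|\partial_t^\alpha h\|_{C^1(D)}/\alpha! < \delta$.

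To pass from pointwise coefficient bounds to the desired operator norms, I would invoke that $D \subset B_0(2)$ lies in a strip of width $4$, so the Poincar\'e--Friedrichs inequality yields $\|\partial_k u\|_{L^2(D)} \le (4/\pi)\|\nabla\partial_k u\|_{L^2(D)} \le (4/\pi)\|u\|_{H^2_{00}(D)}$ for every $u \in H^2_{00}(D)$; this lets me absorb the first-order part of $\Delta_g - \Delta_{\mathds 1}$ into the $H^2_{00}\to L^2$ norm. The principal part gives $\|(g^{jk}-\delta^{jk})\partial_j\partial_k u\|_{L^2} \le \|g^{-1}-\mathds 1\|_{C^0}\|u\|_{H^2_{00}}$ directly. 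Collecting the two contributions yields an overall bound of the form $(C_2 + C_1 \cdot 4/\pi)\,\delta$, and careful bookkeeping, exploiting $\delta < 1/100$ to keep every geometric-series remainder near its leading term, produces the stated constant $6.23$. The bilinear-form estimate is simpler: $(B_g - B_{\mathds 1})(u,v) = \int_D (\det(g)^{1/2}-1)\,uv$ gives $\|B_g - B_{\mathds 1}\|_{L^2\to (L^2)^\ast} \le \|\det(g)^{1/2}-1\|_{C^0}$, and the same Leibniz-plus-weighted-sum argument applied to the Taylor expansion of $\det(\mathds 1 + h)^{1/2}$ yields the constant $1.04$.

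The main obstacle is purely combinatorial bookkeeping: tracking the Poincar\'e factor $4/\pi$, the multiplicative losses in handling products of weighted Taylor series, and the small corrections from inverting $\det g$, all while ensuring that the final constants come out to the advertised $6.23$ and $1.04$ rather than something noticeably larger. No conceptual difficulty is anticipated; the smallness $\delta < 1/100$ is precisely what makes the explicit constants attainable after straightforward (if tedious) arithmetic.
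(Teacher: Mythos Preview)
Your strategy is correct and shares the key mechanism with the paper's proof: the weighted Taylor sums $\sum_{|\alpha|\le s}\rho^{|\alpha|}\|\partial_t^\alpha(\cdot)\|/\alpha!$ are sub-multiplicative under bilinear pairings, so the hypothesis on $g-\mathds 1$ propagates through the constituents of $\Delta_g$. The paper, however, organizes the computation differently in two respects that matter for the constant. First, rather than expanding $\Delta_g$ into principal and first-order coordinate pieces, it keeps the divergence form and telescopes
\[
\Delta_g - \Delta_{\mathds 1} = (\det(g)^{-1/2}-1)\circ\divg\circ(\det(g)^{1/2}g^{-1})\circ\nabla \;+\; \divg\circ(\det(g)^{1/2}g^{-1}-\mathds 1)\circ\nabla,
\]
estimating each factor as a bounded map $H^2_{00}\to H^1_0\to H^1_0\to L^2$ with constants $1,\sqrt 2,\sqrt 2$; crucially, it uses the two-dimensional fact that $\det(g)g^{-1}$ (the adjugate) is isometrically conjugate to $g^T=g$, so the weighted sum for $\det(g)g^{-1}-\mathds 1$ equals that for $g-\mathds 1$ \emph{exactly}, with no geometric-series loss from inverting. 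Second, the Poincar\'e constant used is $1$, not $4/\pi$: since $D$ lies in a disk of radius $2<j_{0,1}$, the lowest Dirichlet eigenvalue on $D$ exceeds $1$. Your coordinate-form expansion with the strip Poincar\'e constant would give a legitimate inequality, but the constant would come out larger than $6.23$; hitting the stated number requires these two refinements (or equivalent ones).
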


\begin{proof}
    Let $X$ be a Banach space and $f_{(\cdot)} \in C^s((-1,1)^m,X)$. Let us introduce the notation
    \begin{align}
        \mathcal S_X(f;s,\rho) := \sum_{|\alpha| \leq s} \frac{\rho^{|\alpha|}}{\alpha!} \|\partial_t^\alpha|_{t=0} f_t\|_X.
    \end{align}
    If $a:X \to Y$ is linear, then $\mathcal S_Y(af;s,\rho) \leq \|a\| \mathcal S_X(g;s,\rho)$.
    This definition also interacts nicely with the Leibniz rule: Given $f \in C((-1,1)^m,X)$, $g \in C((-1,1)^m,Y)$ and a bounded bilinear map $b: X \times Y \to Z$ with norm $\|b\|$, then 
    $\mathcal S_Z(b(f,g);s,\rho) \leq \|b\| \mathcal S_X(f;s,\rho) \mathcal S_Y(g;s,\rho)$. 
    
    The following estimates and identities follow directly from this property, elementary calculations, the (non-sharp) Poincar\'e inequality $\|u\|_{L^2} \leq \|u\|_{H^1_0}$ and the definitions above:
    \begin{enumerate}
        \item $\|\nabla u\|_{H^1_{0}} = \| u\|_{H^2_{00}}$,
        \item $\mathcal S_{C^1}( \det(g) g^{-1} - \mathds 1 ; s, \rho) = \mathcal S_{C^1}( g - \mathds 1 ; s, \rho)$,
        \item $\mathcal S_{C^1}( \det(g) - 1; s, \rho) \leq 2 \mathcal S_{C^1}( g - \mathds 1 ; s, \rho) (1 + \mathcal S_{C^1}( g - \mathds 1 ; s, \rho))$,
        \item $\mathcal S_{C^1}( f^{-\frac12} - 1; s, \rho) \leq \frac12 \mathcal S_{C^1}( f - 1 ; s, \rho) (1 - \mathcal S_{C^1}( f - 1 ; s, \rho))^{-1}$ for any $f \in C^1((-1,1)^m,C^1(D))$ with $\mathcal S_{C^1}( f - 1 ; s, \rho) < 1$,
        \item \label{item:PoincareInequality}$\|u\|_{L^2} \leq \|u\|_{H^1_0}$,
        \item $\|hv\|_{H^1_{0}} \leq \sqrt{2} \|h\|_{C^1} \|v\|_{H^1_0}$,
        \item $\|fu\|_{H^1_{0}} \leq \sqrt{2} \|f\|_{C^1} \|u\|_{H^1_0}$,
        \item $\| \divg v \|_{L^2} \leq \sqrt{2}\|v\|_{H^1_0}$.
    \end{enumerate}
    A brief justification of each of the claims above follows.
    \begin{enumerate}
        \item This follows directly from the definition:
        \begin{align*}
            \|\nabla u\|_{H^1_{0}}^2 = \sum_{j=1}^2 \|\partial_j u \|_{H^1_{0}}^2 = \sum_{j,k=1}^2 \|\partial_j \partial_k u \|_{L^2}^2 = \| u\|_{H^2_{00}}^2.
        \end{align*}
        \item In two dimensions, the matrices $\det(g) g^{-1} - \mathds 1$ and $g^T - \mathds 1$ are conjugate via the isometry $(x,y) \mapsto (-y,x)$. Since $g = g^T$, and the isometry does not depend on $t$,
        \begin{align*}
            | \partial_t^\alpha \left( \det(g) g^{-1} - \mathds 1 \right)|_{2 \to 2} = | \partial_t^\alpha \left( g - \mathds 1 \right)|_{2 \to 2}
        \end{align*}
        for all $\alpha \in \mathbb N^2$.
        \item The linear functional $a_{j,k} : C^1(D,\mathbb R^{2\times 2}_{\mathrm{sym}}) \to C^1(D)$, $a_{j,k} h = h_{jk}$ has norm $1$, since $a_{j,k} h = e_j^T h e_k$. Hence,
        $\mathcal S_{C^1}( g_{jk} - \delta_{jk}; s, \rho) \leq \mathcal S_{C^1}( g - \mathds 1; s, \rho)$ for all $j,k \in \{1,2\}$. By the triangle inequality and the product rule,
        \begin{align*}
            \mathcal S_{C^1}(\det g - 1; s, \rho) &= \mathcal S_{C^1}(g_{11} g_{22} - g_{12}^2 - 1; s, \rho) \\
            &\leq \mathcal S_{C^1}(g_{11} - 1; s, \rho) + \mathcal S_{C^1}(g_{22} - 1; s, \rho) \\ &+ \mathcal S_{C^1}(g_{11} - 1; s, \rho)\mathcal S_{C^1}(g_{22} - 1; s, \rho) + \mathcal S_{C^1}(g_{12}; s, \rho)^2 \\
            &\leq 2 \mathcal S_{C^1}(g - \mathds 1; s, \rho) (1 + \mathcal S_{C^1}(g - \mathds 1; s, \rho)).
        \end{align*}
        \item Since $C^1(D)$ is a Banach algebra, $\|f_0 - 1\|_{C^1(D)} \leq S_{C^1}(f - 1; s, \rho) < 1$ and the radius of convergence of the power series $(1+z)^{\frac12} = 1 + \sum_{j=1}^\infty \binom{\frac12}{j} z^j$ is $1$, the identity $f_t^{\frac12} - 1 = \sum_{j=1}^\infty \binom{\frac12}{j} (f-1)^j$ holds for all $t$ in a neighborhood of zero. Note that $\left|\binom{\frac12}{j}\right| \leq \frac12$ for all $j \geq 1$. The product rule and triangle inequality for $\mathcal S$ then imply
        \begin{align*}
            \mathcal S_{C^1}(f^{\frac12} - 1; s, \rho) &\leq \sum_{j=1}^\infty \left|\binom{\frac12}{j}\right| \mathcal S_{C^1}(f - 1; s, \rho)^j \\
            &\leq \frac12 \mathcal S_{C^1}(f - 1; s, \rho) \sum_{j=0}^\infty \mathcal S_{C^1}(f - 1; s, \rho)^j \\
            &\leq \frac12 \mathcal S_{C^1}(f - 1; s, \rho) (1 - \mathcal S_{C^1}(f - 1; s, \rho))^{-1}.
        \end{align*}
        \item Let $j_{0,1} = 2.40\dots$ denote the first nonnegative zero of the Bessel function $J_0$. Then,
        for all $u \in H^1_0(\mathbb D_{j_{0,1}})$, the inequality $\|u\|_{H^1_0} \geq \|u\|_{L^2}$ holds, since the lowest eigenvalue of the Dirichlet Laplacian on $\mathbb D_{j_{0,1}}$ equals $1$.
        Since our domain $D$ is assumed to be contained in a disk of radius $2 < j_{0,1}$, $H^1_0(D) \hookrightarrow H^1_0(\mathbb D_{j_{0,1}})$, and the inequality follows.
        \item Using \cref{item:PoincareInequality},
        \begin{align*}
            \| h u \|_{H^1_0}^2 &= \sum_{j=1}^2 \|\partial_j (hu) \|_{L^2}^2 = \sum_{j=1}^2 \| |\partial_j h|_{2\to 2} |u|_2 + |h|_{2 \to 2} |\partial_j u|_2\|_{L^2}^2 \\
            &\leq 2 \sum_{j=1}^2 (\| |\partial_j h|_{2 \to 2} \|_{C^0}^2 \|u\|_{L^2}^2 + \| |h|_{2 \to 2} \|_{C^0}^2 \|\partial_j u\|_{L^2}^2) \\
            &= 2 \|u\|_{L^2}^2 \sum_{j=1}^2 \||\partial_j h|_{2 \to 2} \|_{C^0}^2 + 2 \| |h|_{2\to 2} \|_{C^0}^2 \|u\|_{H^1_0}^2 \\
            &\leq 2 \|u\|_{H^1_0}^2 \Big( \||h|_{2 \to 2}\|_{C^0} + \sum_{j=1}^2 \||\partial_j h|_{2\to 2}\|_{C^0} \Big)^2 = 2 \|u\|_{H^1_0}^2 \|h\|_{C^1}^2.
        \end{align*}
        \item This follows from specializing $h$ to $f \mathds 1$ in item (6).
        \item By quadratic mean--arithmetic mean inequality, $\|\divg v \|_{L^2}^2 = \int_D (\partial_1 v_1 + \partial_2 v_2)^2 \leq 2 \int_D (|\partial_1 v_1|^2 + |\partial_2 v_2|^2) \leq 2 \|v\|_{H^1_0}^2$.
    \end{enumerate}
    If $\mathcal S_{C^1}( g-\mathds 1; s, \rho) \leq \delta$, then 
    \begin{align*}
        \mathcal S_{C^1}( \det(g) - 1; s, \rho) \leq 2 \mathcal S_{C^1}( g-\mathds 1; s, \rho) (1 + \delta) \leq 2 \delta (1+\delta),
    \end{align*}
    and hence
    \begin{align*}
        \mathcal S_{C^1}( (\det g)^{-\frac12} - 1; s, \rho) &\leq 
        \mathcal S_{C^1}( g-\mathds 1; s, \rho)(1+\delta)(1-2\delta-2\delta^2)^{-1} \\
        \mathcal S_{C^1}( (\det g)^{\frac12}g^{-1} - \mathds 1; s, \rho) &\leq 
        \mathcal S_{C^1}( (\det g)^{-\frac12} - 1; s, \rho)
        \mathcal S_{C^1}( (\det g) g^{-1}; s, \rho) \\ &+ 
        \mathcal S_{C^1}( (\det g) g^{-1} - \mathds 1; s, \rho) \\
        &\leq \mathcal S_{C^1}( (\det g)^{-\frac12} - 1; s, \rho) (1 + \mathcal S_{C^1}( g - \mathds 1; s, \rho)) + \mathcal S_{C^1}( g - \mathds 1; s, \rho) \\
        &\leq \mathcal S_{C^1}( g - \mathds 1; s, \rho) \big((1+\delta)^2(1-2\delta-2\delta^2)^{-1} + 1\big)
    \end{align*}
    Assuming $\delta = \frac1{100}$, the expressions involving $\delta$ can be estimated away, leading to 
    \begin{align*}
        \mathcal S_{C^1}( (\det g)^{-\frac12} - 1; s, \rho) &\leq 1.04 \mathcal S_{C^1}( g - \mathds 1; s, \rho), \\
        \mathcal S_{C^1}( (\det g)^{\frac12}g^{-1} - \mathds 1; s, \rho) &\leq 2.05 \mathcal S_{C^1}( g - \mathds 1; s, \rho).
    \end{align*}
    Writing $\Delta_g - \Delta_{\mathds{1}}$ as a telescoping sum,
    \begin{align*}
        \Delta_g - \Delta_{\mathds{1}} =
        (\det(g)^{-\frac12} - 1) \circ \divg \circ (\det(g)^{\frac12} g^{-1}) \circ \nabla + 
        \divg \circ (\det(g)^{\frac12} g^{-1} - \mathds 1) \circ \nabla,
    \end{align*}
    and applying 1---8 yields
    \begin{align*}
        \mathcal S_{\mathcal L(H^1_{00},L^2)}( \Delta_{g_t} - \Delta_{\mathds 1}; s, \rho) &\leq \mathcal S_{C^1}( \det(g)^{-\frac12} - 1 ; s, \rho)
        \cdot \sqrt{2} \cdot \sqrt{2} \cdot \mathcal S_{C^1}( \det(g)^{\frac12}g^{-1}; s, \rho) \\
        &+ \sqrt{2} \cdot \sqrt{2} \cdot \mathcal S_{C^1}( \det(g)^{\frac12}g^{-1} - \mathds 1; s, \rho) \\
        &\leq 2 \cdot 1.04 \cdot \mathcal S_{C^1}( g - \mathds 1; s, \rho) (1 + 2.05 \mathcal S_{C^1}( g - \mathds 1; s, \rho)) \\
        &+ 2 \cdot 2.05 \cdot \mathcal S_{C^1}( g - \mathds 1; s, \rho)
    \end{align*}
    Under the assumption $\mathcal S_{C^1}( g - \mathds 1; s, \rho) \leq \frac{1}{100}$ this reduces to 
    \begin{align*}
        \mathcal S_{\mathcal L(H^1_{00},L^2)}( \Delta_{g_t} - \Delta_{\mathds 1}; s, \rho) \leq 6.23 \mathcal S_{C^1}( g - \mathds 1; s, \rho).
    \end{align*}

    Now we estimate $B_g - B_{\mathds 1}$. Since $B_g = B_{\mathds 1} \circ \det(g)^{\frac12}$,
    \begin{align*}
        \mathcal S_{\mathcal L(L^2,(L^2)^\ast)}( B_{g_t} - B_{\mathds 1}; s, \rho)
        &\leq \mathcal S_{C^0}( \det(g)^{\frac12} - 1; s, \rho) \\
        &\leq \frac12 S_{C^0}( \det(g) - 1; s, \rho) (1 - S_{C^0}( \det(g) - 1; s, \rho))^{-1} \\
        &\leq S_{C^0}( g - \mathds 1; s, \rho) (1+\delta)(1 - 2\delta - 2\delta^2)^{-1}.
    \end{align*}
    Assuming $\delta \leq \frac{1}{100}$, $\mathcal S_{\mathcal L(L^2,(L^2)^\ast)}( B_{g_t} - B_{\mathds 1}; s, \rho) \leq 1.04 S_{C^0}( g - \mathds 1; s, \rho)$.
\end{proof}

\begin{lemma}
    \label{lem:concreteEstimates}
    Let $D \subseteq \mathbb R^2$ be a smoothly bounded domain contained in a disk of radius $2$. Let $X_1,\ldots,X_m \in C^\infty(\overline{D},\mathbb R^2)$ be $D_N$-equivariant vector fields. Assume that $X_1,\ldots,X_m \equiv 0$ on a subdomain $K \subseteq D$ with $\overline{K} \subseteq D$.
	Let $M = \sum_{j=1}^m \|J_{X_j} \|_{C^1(D,\mathbb R^{2\times 2})}$.
    
    Define $\phi_{(\cdot)}:(-1,1)^m \times \overline{D} \to \mathbb R^2$ by $\phi_t(x) := x + \sum_{j=1}^m t_j X_j(x)$ and write $D_t = \phi_t(D)$. Given $\rho > 0$, write $\mathcal D_\rho = \{(t,x): x \in \phi_t(D), t \in (-\rho,\rho)^m \}$.
    
	Let $\lambda_0$ be an eigenvalue that is simple in the $D_N$-invariant spectrum of $\Delta^2$ on $D_0$. Let $u_0$ be a corresponding eigenfunction with $\int_{D_0} u_0^2 = 1$. Let $\tau > 0$ be such that $(\lambda_0^2 - \tau, \lambda_0^2 + \tau)$ contains no points in the $D_N$-invariant spectrum of $\Delta^2$ besides $\lambda_0^2$.
    
	Let $\rho = \frac{\tau}{2400 M (\lambda_0^2 + \tau)}$. Then there exists a unique $D_N$-invariant function $u \in C^\infty(\overline{\mathcal D_\rho})$ with $\Delta^2 u_t = \lambda_t^2 u_t$, $\int_{D_t} u_t^2 = 1$ and $u_t|_{t=0} = u_0$. The following hold for all $t \in (-\rho,\rho)^m$:
	\begin{align}
        \label{eq:concreteDerivativeEstimates}
        \begin{split}
		\sum_{|\alpha| \in \mathbb N^m} \frac{\rho^{|\alpha|}}{\alpha!} |\partial_t^\alpha \lambda_t| &= 2 \tau, \\ 
		\sum_{|\alpha| \in \mathbb N^m} \frac{\rho^{|\alpha|}}{\alpha!} \| \partial_t^\alpha u \|_{H^2(K)} &\leq 2(\lambda_0^2 + \tau)^{\frac12}, \\
		\sum_{|\alpha| \in \mathbb N^m} \frac{\rho^{|\alpha|}}{\alpha!} \| \partial_t^\alpha u \|_{L^2(K)} &\leq 2.
        \end{split}
	\end{align}
\end{lemma}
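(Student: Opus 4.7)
\smallskip
\noindent\textbf{Proof plan.} The strategy is to pull everything back to the fixed domain $D = D_0$ via the diffeomorphism $\phi_t$, realize the resulting $t$-dependent clamped plate problem as a parametric family of operators satisfying the setup of \Cref{lem:abstractEstimates}, and verify the required size hypotheses using \Cref{lem:derivativeEstimatesForLaplacian}.

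\smallskip
\noindent\emph{Step 1 (Pullback to the fixed domain).} Set $g_t = (D\phi_t)^{\!\top}(D\phi_t)$ and take $X = H^2_{00}(D)$, $H = L^2(D)$ with the compact inclusion $\iota$. Define $A_t: X \to H$ by $A_t u = -\Delta_{g_t} u$ and $B_t : H \to H^\ast$ by $B_t u = \det(g_t)^{1/2}\, u$. Integration by parts (using $u=\partial_n u=0$ on $\partial D$) gives $\|A_0 u\|_H = \|u\|_X$, and clearly $\langle B_0 u, v\rangle = (u,v)_H$, so the structural hypotheses of \Cref{lem:abstractEstimates} are met. Since the clamped plate bilinear form on $D_t$ for $\tilde u_t = u_t \circ \phi_t^{-1}$ pulls back to $\int_D (A_t u)(A_t v)\det(g_t)^{1/2} = \lambda^2 \int_D u v\det(g_t)^{1/2}$ on $D$, the operator equation $(\lambda^2 \iota^\ast B_t \iota - A_t^\ast B_t A_t) u = 0$ is equivalent to the clamped plate problem on $D_t$. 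The $D_N$-equivariance of the $X_j$ means these operators preserve the $D_N$-invariant subspaces of $X$ and $H$, and by hypothesis $\lambda_0^2$ is simple in the $D_N$-invariant spectrum with spectral gap at least $\tau$, so \Cref{lem:abstractEstimates} can be applied in this invariant subspace.

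\smallskip
\noindent\emph{Step 2 (Verifying hypotheses).} Writing $J = \sum_j t_j J_{X_j}$, we have $g_t - \mathds 1 = J + J^{\!\top} + J^{\!\top} J$, so the only nonzero $t$-derivatives of $g_t - \mathds 1$ at $t=0$ are $\partial_{t_j}(g_t - \mathds 1) = J_{X_j} + J_{X_j}^{\!\top}$ and $\partial_{t_j}\partial_{t_k}(g_t - \mathds 1) = J_{X_j}^{\!\top} J_{X_k} + J_{X_k}^{\!\top} J_{X_j}$. Using $\|J_j + J_j^{\!\top}\|_{C^1} \leq 2\|J_j\|_{C^1}$ and the Banach-algebra bound $\|J_j^{\!\top} J_k\|_{C^1} \leq \|J_j\|_{C^1}\|J_k\|_{C^1}$, a direct multi-index computation gives
\begin{align*}
    \mathcal S_{C^1}(g_t - \mathds 1; s, 2\rho)\big|_{t=0} \leq 4\rho M + 4\rho^2 M^2 = 4\rho M(1 + \rho M).
\end{align*}
For the choice $\rho = \frac{\tau}{2400\, M\, (\lambda_0^2 + \tau)}$ we obtain $\rho M \leq 1/2400$ and hence $\mathcal S_{C^1}(g_t - \mathds 1; s, 2\rho) \leq \frac{\tau}{599(\lambda_0^2 + \tau)}$, which is well below the threshold $\delta = \frac{1}{100}$ of \Cref{lem:derivativeEstimatesForLaplacian}. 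That lemma then yields
\begin{align*}
    \mathcal S_{\mathcal L(X,H)}(A_t - A_0; s, 2\rho) \leq \tfrac{6.23\,\tau}{599(\lambda_0^2+\tau)} < \tfrac{\tau}{20(\lambda_0^2+\tau)}, \qquad
    \mathcal S_{\mathcal L(H,H^\ast)}(B_t - B_0; s, 2\rho) \leq \tfrac{1.04\,\tau}{599(\lambda_0^2+\tau)} < \tfrac{\tau}{50(\lambda_0^2+\tau)},
\end{align*}
which are precisely conditions \eqref{eq:abstractConditionA} and \eqref{eq:abstractConditionB}.

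\smallskip
\noindent\emph{Step 3 (Conclusion and transfer to $K$).} \Cref{lem:abstractEstimates} now provides a $D_N$-invariant family $(u_t, \lambda_t)$ with the three asserted sums bounded by $2\tau$, $2(\lambda_0^2+\tau)^{1/2}$, and $2$ respectively, where $u_t$ is viewed on $D$. Because $X_j \equiv 0$ on $K$, the diffeomorphism $\phi_t$ is the identity on $\overline K$, so $K \subseteq D_t$ and the eigenfunction on $D_t$ coincides on $K$ with the pulled-back $u_t$. Consequently $\|\partial_t^\alpha u_t\|_{L^2(K)} \leq \|\iota \partial_t^\alpha u_t\|_H$ and, using integration by parts together with the clamped boundary conditions (so that $\|D^2 v\|_{L^2(D)} = \|\Delta v\|_{L^2(D)} = \|v\|_X$ for $v\in X$) and the Poincaré-type inequalities from the proof of \Cref{lem:derivativeEstimatesForLaplacian}, one has $\|\partial_t^\alpha u_t\|_{H^2(K)} \leq \|\partial_t^\alpha u_t\|_X$; the stated bounds follow.

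\smallskip
\noindent\emph{Main obstacle.} The conceptually routine but computationally delicate point is Step~2: organizing the multi-index bookkeeping so that $\mathcal S_{C^1}(g_t - \mathds 1; s, 2\rho)$ produces constants small enough to fit below the thresholds $1/20$ and $1/50$ of \Cref{lem:abstractEstimates} after the factor $6.23$ from \Cref{lem:derivativeEstimatesForLaplacian} is absorbed. The choice of $\rho$ is engineered to create a comfortable safety margin in these inequalities.
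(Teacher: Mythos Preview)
Your Steps 1--3 correctly set up the pullback and verify the hypotheses of \Cref{lem:abstractEstimates} at $t=0$; the numerics are fine. The gap is that the lemma asserts the three derivative bounds \emph{for all} $t\in(-\rho,\rho)^m$, and also that the family $(u_t,\lambda_t)$ exists on the full box. \Cref{lem:abstractEstimates} only guarantees existence on some unquantified $(-\varepsilon,\varepsilon)^m$ and only yields the derivative bounds at the single base point $t=0$. You cannot simply re-apply it at $t\neq 0$ with the same $X,H,A_t,B_t$, because the structural normalizations $\|A_t u\|_H=\|u\|_X$ and $\langle B_t\cdot,\cdot\rangle=(\cdot,\cdot)_H$ fail away from $t=0$.

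The paper supplies exactly this missing ingredient. It re-centers at each $t$ by introducing $\Phi^t_{t'}$ with $\Phi^t_{t'}\circ\phi_t=\phi_{t+t'}$, so that the operators $\Delta_{g^t_{t'}}$, $B_{g^t_{t'}}$ on $D_t$ satisfy the structural hypotheses at $t'=0$; this forces one to estimate $\|J_{X_j\circ\phi_t^{-1}}\|_{C^1(D_t)}$, which is where the factors $(1-|t|_\infty M)^{-3}$ enter and the constant $2400$ is actually used up. Separately, to show the family extends to the whole box one needs the spectral gap to persist: the paper uses the $\tau$-independent first-derivative bound \eqref{eq:firstDerivativeEstimate} (valid for \emph{every} eigenvalue in the window, including possible intruders) to confine $\lambda_t^2$ to $(\lambda_0^2-\tfrac{\tau}{20},\lambda_0^2+\tfrac{\tau}{20})$ and keep all other eigenvalues outside $(\lambda_0^2-\tfrac{19\tau}{20},\lambda_0^2+\tfrac{19\tau}{20})$ on $(-\rho,\rho)^m$. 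Both of these arguments are absent from your proposal, and without them the conclusion as stated does not follow.
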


\begin{proof}
    Denote $g_t = \phi_t^\ast g_{\mathbb R^2}$. The Laplace operator $\Delta_{g_{\mathbb R^2}}: H^2_{00}(D_t) \to L^2(D_t)$ and the inner product $B_{g_{\mathbb R^2}}: L^2(D_t) \to L^2(D_t)^\ast$ are conjugate to $\Delta_{g_t} : H^2_{00}(D) \to L^2(D)$ and $B_{g_t}: L^2(D) \to L^2(D)^\ast$ via the pull-back $\phi_t^\ast$. Note that $g_t$ is analytic in $t$ (in fact, quadratic), hence $\Delta_{g_{(\cdot)}}$ is a real-analytic family of operators in $\mathcal L(H^2_{00}(D), L^2(D))$. It follows from basic perturbation theory that there exists $\varepsilon > 0$ and analytic functions $\tilde u_{(\cdot)}:(-\varepsilon,\varepsilon)^m \to H^2_{00}(D)$, $\lambda_{(\cdot)}:(-\varepsilon,\varepsilon)^m \to \mathbb R$ such that $\tilde u_t|_{t=0} = u_0$, $\Delta_{g_t}^2 \tilde u_t = \lambda_t^2 \tilde u_t^2$ and $\int_{D} \tilde u_t^2 \det(g_t)^{\frac12} = 1$. By a bootstrap argument, $\varepsilon$ may be enlarged until $\inf_{t \in (-\varepsilon,\varepsilon)^m} \mathrm{dist}(\lambda_t^2,\sigma(\Delta_{g_t}^2) \setminus \{\lambda_t^2\}) = 0$.
    
    The function $u_t \in H^2_{00}(D_t)$ is then given by $(\phi_t^{-1})^\ast \tilde u_t$. Since $X_1,\ldots,X_m$ are assumed to vanish on $K \subseteq D$, $u_t$ agrees with $\tilde u_t$ on $K$, and any derivative estimates on $\tilde u_t$ directly translate to estimates for $u_t$ on $K$. The remainder of the proof is concerned with obtaining explicit bounds on $\varepsilon$ and on the derivatives of $\lambda_t$ and $\tilde u_t$ (seen as a function valued in $H^2_{00}(D)$ or $L^2(D)$).
    
    Define $\Phi^t_{t'}: D_t \to \mathbb R^2$, 
    \begin{align*}
        \Phi^t_{t'}(x) = x + \sum_{j=1}^m t'_j \, (X_j \circ \phi_t^{-1})(x),
    \end{align*}
    and write $g^t_{t'} = (\Phi^t_{t'})^\ast g_{\mathbb R^2} = J_{\Phi^t_{t'}}^T J_{\Phi^t_{t'}}$. The maps $\Phi^t_{t'}$ enjoy the semigroup-like property $\Phi^t_{t'} \circ \phi_t = \phi_{t+t'}$. Thus, $\Delta: H^2_{00}(D_{t+{t'}}) \to L^2(D_{t+{t'}})$ and $B_{g_{\mathbb R^2}}: L^2(D_{t+t'}) \to L^2(D_{t+t'})^\ast$ are conjugate to $\Delta_{g^t_{t'}}: H^2_{00}(D_{t}) \to L^2(D_{t})$ and $B_{g^t_{t'}}: L^2(D_{t}) \to L^2(D_{t})^\ast$ via $(\Phi^t_{t'})^\ast$. By the Leibniz rule,
    \begin{align*}
        \mathcal S_{C^1(D_t,\mathbb R^{2\times 2})}(g^t_{(\cdot)} - \mathds 1,s,\rho) &= \mathcal S_{C^1(D_t,\mathbb R^{2\times 2})}(J_{\Phi^t_{(\cdot)}}^T J_{\Phi^t_{(\cdot)}} - \mathds 1,s,\rho) \\ &\leq \mathcal S_{C^1(D_t,\mathbb R^{2\times 2})}(J_{\Phi^t_{(\cdot)}} - \mathds 1,s,\rho)(2 + \mathcal S_{C^1(D_t,\mathbb R^{2\times 2})}(J_{\Phi^t_{(\cdot)}} - \mathds 1,s,\rho)),
    \end{align*}
    and since $J_{\Phi^t_{t'}}$ is linear in $t'$,
    \begin{align*}
        \mathcal S_{C^1(D_t,\mathbb R^{2\times 2})}(J_{\Phi^t_{(\cdot)}} - \mathds 1,s,\rho) = \rho \sum_{j=1}^m \| J_{X_j \circ \phi_t^{-1}} \|_{C^1(D_t,\mathbb R^{2\times 2})}
    \end{align*}
    In particular, $\mathcal S_{C^1(D,\mathbb R^{2\times 2})}(J_{\phi_t} - \mathds 1,s,\rho) = M$.

    We wish to apply \Cref{lem:abstractEstimates} to the operators $A_{t'} = \Delta_{g^t_{t'}}$ and $B_{t'} = B_{g^t_{t'}}$ operating on the spaces $H^2_{00}(D_t)$ and $L^2(D_t)$. The required estimates on the derivatives of $A_{t'}$ and $B_{t'}$ will follow from \Cref{lem:derivativeEstimatesForLaplacian}. To apply it, we first need derivative estimates for $g^{t}_{(\cdot)}: (-1,1)^m \to C^1(D_t, \mathbb R^{2\times 2})$ at $t' = t$. Let $X \in \{X_1,\ldots,X_m\}$ and consider $X \circ \phi_t^{-1}$. By the chain rule,
	\begin{align*}
		J_{X \circ \phi_t^{-1}} = (J_X \circ \phi_t^{-1}) \cdot J_{\phi_t^{-1}}.
	\end{align*}
    By the Leibniz rule,
    \begin{align*}
    \left\|  J_{X \circ \phi_t^{-1}} \right\|_{C^1(D_t,\mathbb R^{2\times 2})} \leq \left\| J_X \circ \phi_t^{-1} \right\|_{C^1(D_t,\mathbb R^{2\times 2})}
    \left\| J_{\phi_t^{-1}} \right\|_{C^1(D_t,\mathbb R^{2\times 2})}
    \end{align*}
    First, we consider $\| J_{\phi_t^{-1}}\|_{C^1(D_t,\mathbb R^{2\times 2})}$. A power series argument using submultiplicativity of $\| \cdot \|_{C^s}$, $s \in \{0,1\}$, shows that
    \begin{align*}
        \| J_{\phi_t}^{-1} - \mathds 1 \|_{C^s(D_t,\mathbb R^{2\times 2})} \leq (1 - \| J_{\phi_t} - \mathds 1 \|_{C^s(D_t,\mathbb R^{2\times 2})})^{-1} - 1\leq (1-|t|_\infty M)^{-1} - 1
    \end{align*}
    if $s \in \{0,1\}$ and $|t|_\infty M < 1$. Since $J_{\phi_t^{-1}} \circ \phi_t = J_{\phi_t}^{-1}$ and composition does not change sup-norms, $\|J_{\phi_t^{-1}}\|_{C^0(D_t,\mathbb R^{2\times 2})} \leq (1-|t|_\infty M)^{-1}$.
    Let $F \in C^1(D,\mathbb R^{2\times 2})$. Then 
    \begin{align*}
        | \partial_j (F \circ \phi_t^{-1}) |_{2 \to 2}
        &\leq \sum_{\ell=1}^2 |\partial_\ell F \circ \phi_t^{-1}|_{2 \to 2} \left|[J_{\phi_t^{-1}}]^\ell_j \right| \\
		\| F \circ \phi_t^{-1} \|_{C^1(D_t,\mathbb R^{2\times 2})} &\leq \| F \circ \phi_t^{-1} \|_{C^0(D_t,\mathbb R^{2\times 2})} + 2 \sup_{\ell,j} \left\|[J_{\phi_t^{-1}}]^\ell_j \right\|_{C^0(D_t)} \sum_{\ell=1}^2 \|\partial_\ell F\|_{C^0(D)} \\ &\leq 2 (1-|t|_\infty M)^{-1} \| F \|_{C^1(D,\mathbb R^{2\times 2})}.
    \end{align*}
	It follows that
	\begin{align*}
		\| J_{X} \circ \phi_t^{-1} \|_{C^1(D_t,\mathbb R^{2\times 2})} &\leq 2 (1-|t|_\infty M)^{-1} \| J_X \|_{C^1(D,\mathbb R^{2\times 2})}, \\
		\| J_{\phi_t^{-1}} \|_{C^1(D_t,\mathbb R^{2\times 2})} &\leq 2 (1-|t|_\infty M)^{-1} \| J_{\phi_t}^{-1} \|_{C^1(D,\mathbb R^{2\times 2})} \leq 2 (1-|t|_\infty M)^{-2}
    \end{align*}
	the latter since $J_{\phi_t^{-1}} = J_{\phi_t}^{-1} \circ \phi_t^{-1}$. Thus,
	\begin{align*}
		\| J_{X_\ell \circ \phi_t^{-1}} \|_{C^1(D_t,\mathbb R^{2\times 2})} \leq 4 (1-|t|_\infty M)^{-3} \| J_X \|_{C^1(D,\mathbb R^{2\times 2})}
    \end{align*}
	Summing up over $X_1,\ldots,X_m$ yields
	\begin{align*}
		\sum_{\ell=1}^m \| J_{X_\ell \circ \phi_t^{-1}} \|_{C^1(D_t,\mathbb R^{2\times 2})} \leq 4 (1-|t|_\infty M)^{-3} \sum_{\ell=1}^m \| J_{X_\ell} \|_{C^1(D,\mathbb R^{2\times 2})} = 4 (1-|t|_\infty M)^{-3} M,
	\end{align*}
	and hence 
	\begin{align*}
		\mathcal S_{C^1(D_t,\mathbb R^{2\times 2})}(g^t_{(\cdot)} - \mathds 1,s,\rho) &\leq 4 M \rho \, (1-|t|_\infty M)^{-3} \left( 2 + 4 M \rho \, (1-|t|_\infty M)^{-3} \right)
	\end{align*}
	Assume $|t|_\infty, \rho \leq \frac{1}{1000 M}$. Then 
	\begin{align*}
		\mathcal S_{C^1(D_t,\mathbb R^{2\times 2})}(g^t_{(\cdot)} - \mathds 1,s,\rho) &\leq 8.05 M \rho
	\end{align*}
	Since $\rho \leq \frac{1}{1000 M}$, $\mathcal S_{C^1(D_t,\mathbb R^{2\times 2})}(g^t_{(\cdot)} - \mathds 1,s,\rho) \leq \frac1{100}$, hence we may apply \Cref{lem:derivativeEstimatesForLaplacian} to obtain
	\begin{align}
        \sum_{|\alpha| \leq s} \frac{\rho^{|\alpha|}}{\alpha!} \| \partial_t^\alpha( \Delta_{g_t} - \Delta_{\mathds 1} )  \|_{H^2_{00}(D) \to L^2(D)} &\leq 51 M \rho. \\
        \sum_{|\alpha| \leq s} \frac{\rho^{|\alpha|}}{\alpha!} \| \partial_t^\alpha( B_{g_t} - B_{\mathds 1} )\|_{L^2(D) \to L^2(D)^\ast} &\leq 9 M \rho
	\end{align}
    In particular, $\sum_{j=1}^m \| \partial_{t_j} \Delta_{g_t} \|_{H^2_{00}(D) \to L^2(D)} \leq 51 M$ and $\sum_{j=1}^m \| \partial_{t_j} B_{g_t} \|_{L^2(D) \to L^2(D)^\ast} \leq 9 M$. Thus, any eigenvalue $\mu_t$ in the window $(\lambda_0^2 - \tau, \lambda_0^2 + \tau)$ is subject to the derivative bound 
    \begin{align}
        \label{eq:concreteFirstDerivativeBound}
        \sum_{j=1}^m \| \partial_{t_j} (\mu_t^2) \|_{L^2(D) \to L^2(D)^\ast} \leq 120(\lambda_0^2 + \tau)M
    \end{align}
    for almost every $t \in (-\rho,\rho)$ -- note that \eqref{eq:firstDerivativeEstimate} does not reference the spectral gap and holds for any eigenvalue, since its proof rests only on the Hellmann-Feynman formula, which applies to eigenvalues of higher multiplicity as well.
    
    Setting $\varepsilon = \frac{\tau}{2400 M (\lambda_0^2 + \tau)}$ guarantees that $\lambda_t^2$ stays at a distance of at least $\frac{9}{10}\tau$ from the remaining spectrum of $\Delta_{g_t}^2$ for all $t \in (-\varepsilon,\varepsilon)^m$: The window $(\lambda_0^2 - \tau,\lambda_0^2 + \tau)$ is initially free of eigenvalues other than $\lambda_t^2$, which is constrained by \eqref{eq:concreteFirstDerivativeBound} to remain within the smaller window $(\lambda_0^2-\frac1{20}\tau,\lambda_0^2+\frac1{20}\tau)$ (note that $\frac{120}{2400} = \frac{1}{20}$), and any other eigenvalue is constrained by the same estimate from entering $(\lambda_0^2-\frac{19}{20}\tau,\lambda_0^2+\frac{19}{20}\tau)$.
    
	Let $\rho = \frac{\tau}{2400 M (\lambda_0^2 + \tau)}$. The conditions for \Cref{lem:abstractEstimates} are then satisfied for any $t \in (-\varepsilon,\varepsilon)^m$, so we obtain
	\begin{align*}
		\mathcal S_{C^1}(\lambda_{(\cdot)}^2 - \lambda_t^2,s,\rho) &\leq 2 \tau, \\
        \mathcal S_{C^1((-\varepsilon,\varepsilon)^m,H^2_{00}(D_t))}(u_{(\cdot)},s,\rho) &\leq 2(\lambda_0^2 + \tau)^{\frac12} \\
        \mathcal S_{C^1((-\varepsilon,\varepsilon)^m,L^2(D_t))}(u_{(\cdot)},s,\rho) &\leq 2,
	\end{align*}
    which implies \eqref{eq:concreteDerivativeEstimates}, since $\|\cdot \|_{H^2(K)} \leq \|\cdot \|_{H^2_{00}(D_t)}$ and $\|\cdot \|_{L^2(K)} \leq \|\cdot \|_{L^2(D_t)}$.
\end{proof}

\section{Constructing a suitable one-parameter family of perturbations}

We choose three vector fields $X_1$, $X_2$ and $X_3$ to which we will apply \Cref{lem:concreteEstimates}. At the boundary of the unit disk, these vector fields will satisfy
\begin{align}
    X_1 \cdot n &= \cos(2N \theta) + 1, & X_2 \cdot n &= \cos(N \theta), & X_3 \cdot n &= \cos(3N \theta) + \cos(2N \theta) - \tfrac12.
\end{align}
At $t=0$, the derivatives of the quantities $\lambda_t$, $v_t(0)$ and $w_t(0)$ are then simple and explicit:
\begin{align}
    \label{eq:derivative_expressions1}
    \partial_{t_1} (\lambda_t^2) &= 6 \lambda^2, \\
    \label{eq:derivative_expressions2}
    \partial_{t_2} (\lambda_t^2) &= \partial_{t_3} (\lambda_t^2) = 0, \\
    \label{eq:derivative_expressions3}
    \partial_{t_2} v_t(0) &= \frac{\sqrt{\lambda}}{\sqrt{\pi}} J_0(\sqrt{\lambda})^{-1} \left( J_0(\sqrt{\lambda})^{-1} J_0'(\sqrt{\lambda}) - I_0(\sqrt{\lambda})^{-1} I_0'(\sqrt{\lambda}) \right)^{-1}, \\
    \label{eq:derivative_expressions4}
    \partial_{t_1} v_t(0) &= \partial_{t_3} v_t(0) = 0, \\
    \label{eq:derivative_expressions5}
    \partial_{t_3}^2 w_t(0) &= - \frac{\lambda}{\sqrt{\pi}} I_0(\sqrt{\lambda})^{-1}.
\end{align}
Here, \eqref{eq:derivative_expressions1} and \eqref{eq:derivative_expressions2} were derived from \eqref{eq:derivative_of_lambdasquared}, \eqref{eq:derivative_expressions3} and \eqref{eq:derivative_expressions4} from \eqref{eq:derivative_of_v0}, and \eqref{eq:derivative_expressions5} from \eqref{eq:secondDerivativeOfW}.
For technical reasons, we need $X_1,X_2,X_3$ to be supported outside a disk of radius $1-\frac1N$. To achieve this, we choose a bump function $\chi \in C^\infty(\mathbb R)$ with $\chi \equiv 0$ on $(-\infty,e^{-2})$ and $\chi \equiv 1$ on $(1,\infty)$. Then $\chi(|x|^{2N}) = 0$ for all $x$ with $|x| \leq 1-\frac1N$, since $(1-\frac1N)^{2N} \leq e^{-2}$ for all $N \in \mathbb N$. The function $\chi$ can be chosen to satisfy $\max(\|\chi\|_{C^0},\|\chi''\|_{C^0},\|\chi''\|_{C^0}) \leq 6$. Define the vector fields
\begin{align}
    \label{eq:vectorFieldDefinitions}
    \begin{split}
    X_1 &= \Re((x+iy)^{2N} + 1) \chi((x^2+y^2)^N)(x \partial_x + y \partial_y),\\
    X_2 &= \Re((x+iy)^N) \chi((x^2+y^2)^N) (x \partial_x + y \partial_y), \\
    X_3 &= \Re((x+iy)^{3N} + (x+iy)^{2N} - \tfrac12) \chi((x^2+y^2)^N)(x \partial_x + y \partial_y).
    \end{split}
\end{align}
Our next priority is to compute the $C^1$-norms of the Jacobians of $X_1$, $X_2$ and $X_3$, which enters \Cref{lem:concreteEstimates} in form of the number $M$ in that lemma's statement.

\begin{lemma}
    \label{lem:explicitNorms}
    Assume $N \geq 10$. With $X_1$, $X_2$ and $X_3$ defined as in \Cref{eq:vectorFieldDefinitions}, 
    \begin{align*}
        \|J_{X_1}\|_{C^1(\mathbb D;\mathbb R^{2\times 2})} &\leq 600 N^2, & \|J_{X_2}\|_{C^1(\mathbb D;\mathbb R^{2\times 2})} &\leq 230 N^2, & \|J_{X_3}\|_{C^1(\mathbb D;\mathbb R^{2\times 2})} &\leq 1070 N^2.
    \end{align*}
\end{lemma}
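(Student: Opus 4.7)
The plan is to write each vector field explicitly as a product of elementary factors in Cartesian coordinates, compute the Jacobian by the product rule, and then reduce the $C^1$ estimate to a careful $C^2$ estimate on the scalar functions $f_j := \Re(p_j(z))\chi(|z|^{2N})$, where $p_1(z) = z^{2N}+1$, $p_2(z) = z^N$ and $p_3(z) = z^{3N} + z^{2N} - \tfrac12$. Writing $R = x\partial_x + y\partial_y$ for the radial field, each $X_j = f_j R$ has components $(x f_j,\, y f_j)$, so the Jacobian takes the explicit form
\[
J_{X_j} = f_j\,\mathds 1 + \mathbf{r}\, (\nabla f_j)^T, \qquad \mathbf{r} = (x,y)^T.
\]
On $\overline{\mathbb D}$, where $|x|,|y|\leq 1$, estimating the operator norm of a $2{\times}2$ matrix by the sum of absolute values of its entries gives $|J_{X_j}|_{2\to 2} \leq |f_j| + 2|\nabla f_j|$ and an analogous bound for $|\partial_i J_{X_j}|_{2 \to 2}$ in terms of $|\nabla f_j| + |\nabla^2 f_j|$, so it suffices to control $\|f_j\|_{C^0} + \|\nabla f_j\|_{C^0} + \|\nabla^2 f_j\|_{C^0}$ with appropriate constants.

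I would then estimate the two factors in $f_j$ separately. For the polynomial factor, since $|z|\leq 1$ on $\overline{\mathbb D}$, one has $|\Re(z^k)|\leq 1$, $|\nabla \Re(z^k)|\leq k$, and $|\nabla^2 \Re(z^k)|\leq k(k-1)$, applied to each monomial of $p_j$. For the cutoff, the chain rule gives $\partial_i \chi(|z|^{2N}) = 2N x_i (x^2+y^2)^{N-1}\chi'(|z|^{2N})$, and a second differentiation produces three types of terms: one of order $N$ coming from $\chi'$ alone, one of order $N^2$ from the second spatial derivative hitting $(x^2+y^2)^{N-1}\chi'$, and one of order $N^2$ from $\chi''$ composed with two Jacobian factors of $|z|^{2N-1}$. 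Using the normalization $\max_{0\le j\le 2}\|\chi^{(j)}\|_{C^0}\leq 6$, these give explicit constants $\|\chi(|z|^{2N})\|_{C^0} \leq 6$, $\|\nabla \chi(|z|^{2N})\|_{C^0}\leq 12 N$, and $\|\nabla^2 \chi(|z|^{2N})\|_{C^0} \leq C N^2$ with a computable $C$.

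The final step is to combine via the Leibniz rule: $\nabla^k f_j = \sum_{a+b=k}\binom{k}{a}\nabla^a \Re(p_j) \cdot \nabla^b \chi(|z|^{2N})$. For $X_2$ the polynomial has degree $N$ and only one monomial, so all contributions to $\|f_2\|_{C^2}$ are bounded by a constant times $N^2$, leading to the smallest coefficient $230$. For $X_1$ the degree doubles to $2N$, so terms involving two derivatives on the polynomial acquire an extra factor of roughly $4$, producing the intermediate constant $600$. For $X_3$, the polynomial has degree $3N$ and two nonconstant monomials, pushing the constant up to $1070$. The main obstacle, and where care is required, is tracking all the Leibniz cross terms carefully enough that the accumulated constants meet the stated numerical bounds rather than something loose; using $N\geq 10$ to absorb linear-in-$N$ remainders into the $N^2$ leading term is essential in each case.
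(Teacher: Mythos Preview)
Your approach is correct and essentially the same as the paper's: both write each vector field as a scalar function times the radial field $x\partial_x+y\partial_y$, bound the Jacobian and its first derivatives via the Leibniz rule applied to the product $\Re(p_j(z))\cdot\chi(|z|^{2N})$, use the crude bound on the $2\times 2$ operator norm in terms of entries, and finally absorb lower-order terms using $N\geq 10$ and $\max_j\|\chi^{(j)}\|\leq 6$. The only cosmetic difference is that the paper organizes the computation monomial by monomial via the building blocks $F_K=\Re(z^K)\chi(|z|^{2N})R$ and then assembles $X_1=F_{2N}+F_0$, $X_2=F_N$, $X_3=F_{3N}+F_{2N}-\tfrac12 F_0$, whereas you treat the full polynomial $p_j$ at once; this is the same bookkeeping in a slightly different order.
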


\begin{proof}
    To each $K \in \mathbb N$, we associate the expression
	\begin{align*}
		F_K = \Re((x+iy)^K) \chi((x^2+y^2)^N) (x \partial_x + y \partial_y)
	\end{align*}
	By the Leibniz rule,
	\begin{align*}
		\max_{j,k \in \{1,2\}} \| \partial_j F_K^k \|_{C^0} &\leq (K+2N+1) \max( \|\chi\|_{C^0},\|\chi'\|_{C^0} ), \\
		\max_{j,k,\ell \in \{1,2\}} \| \partial_j \partial_k F_K^\ell \|_{C^0} &\leq (K^2 + 4KN + 4N^2 + K + 2N) \max( \|\chi\|_{C^0},\|\chi'\|_{C^0},\|\chi''\|_{C^0}).
	\end{align*}
	Since any $d \times d$ matrix $A$ satisfies the inequality $\|A\|_{2 \to 2} \leq d \sup_{j,k \in \{1,2\}} |A_{jk}|$,
	\begin{align*}
		\| J_{F_K} \|_{C^0(\mathbb D; \mathbb R^{2\times 2})} &\leq 2 (K+2N+1) \max( \|\chi\|_{C^0},\|\chi'\|_{C^0} ), \\
		\max_{j \in \{1,2\}} \| \partial_j J_{F_K} \|_{C^0(\mathbb D; \mathbb R^{2\times 2})} &\leq 2 (K^2 + 4KN + 4N^2 + K + 2N) \max( \|\chi\|_{C^0},\|\chi'\|_{C^0},\|\chi''\|_{C^0}).
	\end{align*}
    Summing up, we obtain
    \begin{align*}
        \| J_{F_K} \|_{C^1(\mathbb D; \mathbb R^{2\times 2})} \leq (4K^2 + 16 KN + 16N^2 + 12N + 6K + 2)  \max( \|\chi\|_{C^0},\|\chi'\|_{C^0},\|\chi''\|_{C^0}).
    \end{align*}
    Since $X_1 = F_{2N} + F_0$, $X_2 = F_N$ and $X_3 = F_{3N} + F_{2N} - \frac12 F_{0}$,
    \begin{align*}
        \| J_{X_1} \|_{C^1(\mathbb D; \mathbb R^{2\times 2})} &\leq (80N^2 + 36N + 4) \max( \|\chi\|_{C^0},\|\chi'\|_{C^0},\|\chi''\|_{C^0}), \\
        \| J_{X_2} \|_{C^1(\mathbb D; \mathbb R^{2\times 2})} &\leq (36N^2 + 18N + 2) \max( \|\chi\|_{C^0},\|\chi'\|_{C^0},\|\chi''\|_{C^0}), \\
        \| J_{X_3} \|_{C^1(\mathbb D; \mathbb R^{2\times 2})} &\leq (172 N^2 + 60N + 5) \max( \|\chi\|_{C^0},\|\chi'\|_{C^0},\|\chi''\|_{C^0}).
    \end{align*}
    Under the assumptions $5 \leq N$ and $\max( \|\chi\|_{C^0},\|\chi'\|_{C^0},\|\chi''\|_{C^0}) \leq 6$, these expressions can be simplified, yielding the claimed bounds.
\end{proof}

Now let $\Phi_t(x) = x + t_1 X_1(x) + t_2 X_2(x) + t_3 X_3(x)$, write $D_t = \Phi_t(\mathbb D)$ and consider $u_t$, $\lambda_t$ as provided by \Cref{lem:concreteEstimates}.

\begin{lemma}
    \label{lem:oneParameterFamily}
    Assume $N \geq 10$, $\sqrt{\lambda_0} \in (N+N^{\frac13}, N + 3 N^{\frac13})$ and $\frac{\tau}{\lambda_0^2} \leq \frac1{10}$. Let $\varepsilon = 10^{-17} N^{-5}$. There exists a function $g:(-\varepsilon,\varepsilon) \to \mathbb R^2$ such that $\lambda_{(s,g(s))} = \lambda_0$ and such that $\tilde u_s := u_{(s,g(s))}$ satisfies $\frac{1}{2\lambda_0}(\Delta \tilde u_s - \lambda_0 \tilde u_s)(0) = 0$. Let $\tilde w_s = \tfrac{1}{2\lambda_0}\left(\Delta \tilde u_s + \lambda_0 \tilde u_s \right)$ denote the SP component of $\tilde u_s$. Then, for all $s \in (-\varepsilon,\varepsilon)$, 
    \begin{align*}
        \left|\partial_s^3 \tilde w_s(0) \right| \leq 600 \cdot 10^{40} N^{13.5} I_0(\sqrt{\lambda_0})^{-1}.
    \end{align*}
\end{lemma}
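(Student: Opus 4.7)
The approach is to construct $g$ by applying the implicit function theorem to the smooth map $F:(-\rho,\rho)^3 \to \mathbb R^2$ defined by $F(t) = (\lambda_t^2 - \lambda_0^2,\, v_t(0))$, where $\rho$ is the validity window from \Cref{lem:concreteEstimates} applied to the triple $X_1,X_2,X_3$ (so $M \lesssim N^2$ by \Cref{lem:explicitNorms} and $\rho \sim N^{-3}$). The partial Jacobian $\partial_{(t_1,t_2)} F(0)$ is diagonal by \eqref{eq:derivative_expressions1}--\eqref{eq:derivative_expressions4}, with entries $6\lambda_0^2$ and $\partial_{t_2}v(0)$; the quantitative lower bounds on $|W_0(\sqrt{\lambda_0})|$ and $|J_0(\sqrt{\lambda_0})|$ from \Cref{lem:nondegenerate_eigenvalues} make this Jacobian invertible with inverse norm $\lesssim \lambda_0^{-3/4} \sim N^{-3/2}$. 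Since $\partial_{t_3} F(0) = 0$ by \eqref{eq:derivative_expressions2} and \eqref{eq:derivative_expressions4}, the IFT yields a smooth $g$ with $F(g(s),s)\equiv 0$ and, crucially, $g'(0)=0$; this latter cancellation is what permits $\varepsilon$ to be as large as $N^{-5}$ rather than some much smaller scale.

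Next, I would bound $g''(0)$ and $g'''(0)$ by recursively differentiating $F(g(s),s)\equiv 0$. Bounds on the partials of the first component of $F$ come directly from \Cref{lem:concreteEstimates}: $|\partial_t^\alpha \lambda_t^2| \le 2\tau\,\alpha!/\rho^{|\alpha|}$ with $\tau\sim N^3$. For the second component I would exploit that $X_1,X_2,X_3$ vanish on $\mathbb D' := \mathbb D_{1-1/N}$, so the Helmholtz equation $\Delta v_t + \lambda_t v_t = 0$ holds there and $D_N$-invariance plus orthogonality in $\theta$ identify $v_t(0)$ with the $L^2(\mathbb D')$-coefficient of $J_0(\sqrt{\lambda_t}|\cdot|)$. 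Cauchy--Schwarz gives $|v_t(0)| \le \|v_t\|_{L^2(\mathbb D')}/\|J_0(\sqrt{\lambda_t}|\cdot|)\|_{L^2(\mathbb D')}$; differentiating this projection identity in $t$, using \Cref{lem:concreteEstimates} to control the numerator (through $v_t = (2\lambda_t)^{-1}(\Delta u_t - \lambda_t u_t)$) and the lower bound from \Cref{lem:bound_on_L2norms} for the denominator, yields polynomial-in-$N$ bounds $|\partial_t^\alpha v_t(0)| \lesssim N^{1/2}\,\alpha!/\rho^{|\alpha|}$. Inserted into the recursion these give $|g^{(k)}(0)|$ polynomial in $N$, and the choice $\varepsilon = 10^{-17} N^{-5}$ is designed precisely to keep $g(s)$ inside $(-\rho,\rho)^2$ for $|s|<\varepsilon$.

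For the main bound on $\partial_s^3 \tilde w_s(0)$, I would exploit that $\lambda_{(g(s),s)} \equiv \lambda_0$ along the curve, so $\tilde w_s$ satisfies $(\Delta - \lambda_0)\tilde w_s = 0$ on $\mathbb D'$ with coefficients independent of $s$. The operator $\partial_s^k$ therefore commutes with $\Delta_x$, so each $\partial_s^k \tilde w_s$ again solves the modified Helmholtz equation on $\mathbb D'$. By $D_N$-invariance and angular orthogonality, its value at the origin equals the $I_0(\sqrt{\lambda_0}|\cdot|)$-coefficient in the Fourier--Bessel expansion, so that Cauchy--Schwarz together with \Cref{lem:bound_on_L2norms} yields
\begin{equation*}
|\partial_s^3 \tilde w_s(0)| \le \frac{\|\partial_s^3 \tilde w_s\|_{L^2(\mathbb D')}}{\|I_0(\sqrt{\lambda_0}|\cdot|)\|_{L^2(\mathbb D')}} \lesssim \lambda_0^{1/4}\, I_0(\sqrt{\lambda_0})^{-1}\, \|\partial_s^3 \tilde w_s\|_{L^2(\mathbb D')}.
\end{equation*}
Finally, writing $\tilde w_s = \frac{1}{2\lambda_0}(\Delta \tilde u_s + \lambda_0 \tilde u_s)$ reduces the $L^2$-norm on the right to $\|\partial_s^3 \tilde u_s\|_{H^2(\mathbb D')}$ up to constants, which is then expanded by Faà di Bruno in terms of the $\|\partial_t^\alpha u_t\|_{H^2(\mathbb D')}$ for $|\alpha|\le 3$ (all controlled by \Cref{lem:concreteEstimates}) and the bounds on $g^{(k)}(s)$ from the previous step.

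The main obstacle is purely computational: each of the four layers (IFT inversion, recursive computation of $g^{(k)}$, Faà di Bruno, and Fourier--Bessel projection) contributes polynomial factors in $N$, and these must be combined with the $\lambda_0^{1/4} \sim N^{1/2}$ prefactor from the projection and the $\rho^{-3}\sim N^9$-type derivative scaling to produce the stated $N^{13.5}$ exponent. No individual estimate is conceptually subtle, but keeping the constants uniform in $N$ while propagating them across the four steps is the delicate part.
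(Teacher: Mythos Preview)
Your approach is essentially the same as the paper's: implicit function theorem applied to $(\lambda_t^2-\lambda_0^2,\,v_t(0))$ with the diagonal Jacobian from \eqref{eq:derivative_expressions1}--\eqref{eq:derivative_expressions4}, the cancellation $\partial_{t_3}F(0)=0$ driving the $N^{-5}$ scale, and the final $\partial_s^3\tilde w_s(0)$ bound via the $I_0$-projection combined with Fa\`a di Bruno and \Cref{lem:concreteEstimates}. One simplification the paper adopts that you might want to steal: instead of working with $v_t(0)$ and $w_t(0)$ directly (whose projection formulae involve the moving test functions $J_0(\sqrt{\lambda_t}|\cdot|)$, $I_0(\sqrt{\lambda_t}|\cdot|)$), the paper replaces them by the \emph{linear} functionals $R_2(t)=\|J_0(\sqrt{\lambda_0}|\cdot|)\|_{L^2(\mathbb D')}^{-2}\langle \tfrac{1}{2\lambda_0}(\Delta u_t-\lambda_0 u_t),\,J_0(\sqrt{\lambda_0}|\cdot|)\rangle$ and the analogous $F(t)$ with $I_0$, using the fixed frequency $\lambda_0$ throughout; these coincide with $v_t(0)$, $w_t(0)$ exactly on the curve $\lambda_t=\lambda_0$, so the IFT constraint and the final estimate are unchanged, but all the $\partial_t\lambda_t$-terms you would otherwise pick up when differentiating the projection disappear. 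Also note that the lemma's conclusion is stated for all $s\in(-\varepsilon,\varepsilon)$, so you need the bounds on $g',g'',g'''$ (and on $\partial_t^\alpha u_t$) uniformly over the contraction box, not just at $s=0$; \Cref{lem:concreteEstimates} supplies exactly that.
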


\begin{proof}
    The proof is a straightforward application of the implicit function theorem and the concrete derivative estimates of \Cref{lem:concreteEstimates}. Let $\rho = \frac{\tau}{2400 M (\lambda_0^2 + \tau)}$ denote the radius from \Cref{lem:concreteEstimates}, where $M = \sum_{\ell = 1}^3 \| J_{X_\ell} \|_{C^1(\mathbb D)}$. By \Cref{lem:explicitNorms}, $M \leq 1900 N^2$, so $\rho \geq 2 \cdot 10^{-7} N^{-2} \frac{\tau}{(\lambda_0^2 + \tau)}$. By \Cref{lem:nondegenerate_eigenvalues}, $\lambda_0^2 \leq (N+5N^{\frac13})^4 \leq 20 N^4$ and $\tau \geq 4N^3$, hence $\frac{\lambda_0^2+\tau}{\tau} \leq 5 N + 1$. If $N \geq 10$, $\frac{\lambda_0^2+\tau}\tau \leq 5.1 \cdot N$, yielding $\rho \geq 10^{-8} N^{-3}$.
    
    From \Cref{lem:concreteEstimates}, we obtain $\lambda_{(\cdot)}^2 : (- \rho, \rho)^3 \to \mathbb R$ and a parametric family $u_t \in H^2_{00}(D_t)$ of eigenfunctions together with explicit derivative estimates on $\lambda_t^2$ and $u_t|_{\mathbb D_{1-\frac1N}}$ (measured both in $L^2(\mathbb D_{1-\frac1N})$ and in $H^2(\mathbb D_{1-\frac1N})$). In the following, we write $\mathbb D' = \mathbb D_{1-\frac1N}$.
    Define
    \begin{align}
        R_1(t) &= \lambda_t^2 - \lambda_0^2, \\ 
        R_2(t) &= \| J_0(\sqrt{\lambda_0} (\cdot) ) \|_{L^2(\mathbb D')}^{-2} \int_{\mathbb D} J_0(\sqrt{\lambda_0} x) \frac{1}{2\lambda_0} \left( \Delta u_t - \lambda_0 u_t \right)(x) dx \\
        F(t) &= \| I_0(\sqrt{\lambda_0} (\cdot) ) \|_{L^2(\mathbb D')}^{-2} \int_{\mathbb D'} I_0(\sqrt{\lambda_0} x) \frac{1}{2\lambda_0} \left( \Delta u_t + \lambda_0 u_t \right)(x) dx
    \end{align}
    The functions $R_2$ and $F$ are defined in this way to simplify the subsequent calculations. They agree with $v_t(0)$ and $w_t(0)$, the Helmholtz and SP components' values at the origin, wherever $\lambda_t = \lambda_0$, but their derivatives are more easily estimated. Using \Cref{lem:concreteEstimates} and the observation that $\|\Delta u \|_{L^2(\mathbb D')} \leq \sqrt{2} \| u \|_{H^2(\mathbb D')}$,
    \begin{align}
        \label{eq:bound_on_eigenvalue_derivatives}
        \sum_{|\alpha| \leq s} \frac{\rho^\alpha}{\alpha!} |\partial_t^\alpha R_1(t)| &\leq 2\tau \\
        \label{eq:bound_on_Helmholtz_derivatives}
        \begin{split}
        \sum_{|\alpha| \leq s} \frac{\rho^\alpha}{\alpha!} |\partial_t^\alpha R_2(t)| &\leq \| J_0(\sqrt{\lambda_0}(\cdot))\|_{L^2(\mathbb D')}^{-1} \tfrac{1}{2\lambda_0} \left( 2 \sqrt{2} (\lambda_0^2 + \tau)^{\frac12} + 2 \lambda_0 \right) \\ & \leq 4 \| J_0(\sqrt{\lambda_0}(\cdot))\|_{L^2(\mathbb D')}^{-1}
        \end{split}\\
        \label{eq:bound_on_SP_derivatives}
        \begin{split}
        \sum_{|\alpha| \leq s} \frac{\rho^\alpha}{\alpha!} |\partial_t^\alpha F(t)| &\leq \| I_0(\sqrt{\lambda_0}(\cdot))\|_{L^2(\mathbb D')}^{-1} \tfrac{1}{2\lambda_0} \left( 2\sqrt{2} (\lambda_0^2 + \tau)^{\frac12} + 2 \lambda_0 \right) \\ &\leq 4 \| I_0(\sqrt{\lambda_0}(\cdot))\|_{L^2(\mathbb D')}^{-1}
        \end{split}
    \end{align}
    The final expressions in \eqref{eq:bound_on_Helmholtz_derivatives} and \eqref{eq:bound_on_SP_derivatives} were obtained using the assumption $\frac{\tau}{\lambda_0^2} \leq \frac1{10}$.
    
    The remainder of the proof is dedicated to obtaining explicit derivative estimates on the function $g$ obtained by the implicit function theorem as well as the function $F(s,g(s))$, which is just the SP component of $\tilde u(s)$ evaluated at the origin.
    
    To simplify presentation, the situation at hand shall be abstracted as follows: Consider a function $R(x,y): (-\rho,\rho)^{1 + n} \to \mathbb R^n$ whose components satisfy the estimates,
    \begin{align*}
        \sum_{\alpha \leq 3} \frac{\rho^{|\alpha|}}{\alpha!} |\partial^\alpha R_k| \leq E_k, \ k = 1,\ldots,n.
    \end{align*}
    Assume furthermore that $\partial_x R(0,0) = 0$ and $A := D_y R(0,0) = \diag(\mu_1,\ldots,\mu_n)$ for some $\mu_1,\ldots,\mu_n \in \mathbb R \setminus \{0\}$.
    Let $\kappa = \max_k(|\mu_k|^{-1} E_k) \rho^{-1}$. Consider $F(x,y) = y - A^{-1} R(x,y)$, the fixed point iteration from the proof of the implicit function theorem. To simplify notation, we will employ Einstein's summation convention: All indices occurring twice are tacitly summed over. Then
    \begin{align*}
        F(x,y)  &= y - A^{-1} R(x,y) \\
                &= \left( \int_0^1 (\mathds 1 - A^{-1} D_y R(x,t y)) dt \right) y - A^{-1} R(x,0) \\
                &= \left( \int_0^1 A^{-1} (A - D_y R(x,t y)) dt \right) y - \left( \int_0^1 A^{-1} \partial_x R(t x,0) dt \right) x \\
                &= A^{-1} \left( \left( \int_0^1 (A - D_y R(x,t y)) dt \right) y - \left( \int_0^1 \partial_x R(t x,0) dt \right) x \right)\\
                &= - A^{-1} \Bigg( \left( \int_0^1 \int_0^1 \partial_{x} \partial_{y_k} R(s x,s t y)) ds dt \right) x y_k 
                \\ &- \left( \int_0^1 \int_0^1 \partial_{y_j} \partial_{y_k} R(s x,s t y)) t ds dt \right) y_j y_k \\ &- \left( \int_0^1 \int_0^1 \partial_{x}^2 R(s t x,0) t ds dt \right) x^2 \Bigg)
    \end{align*}
    If $(x,y) \in (-\sigma \rho,\sigma \rho)^{1+n}$, $\sigma \in (0,1)$, then the triangle inequality applied to the above implies $|F(x,y)_k| \leq 2 \sigma^2 \mu_k^{-1} E_k$.
    To close the fixed point iteration, $\sigma$ must be small enough so $(x,y) \in (-\sigma \rho,\sigma \rho)^{1+n}$ implies $F(x,y) \in (-\sigma \rho,\sigma \rho)^{1+n}$. It suffices to set $\sigma \leq \frac c2 \rho \min_k( \mu_k E_k^{-1} )$, i.e.\ $\sigma \leq \frac{c}{2 \kappa}$. Here, $c \in (0,1)$ is arbitrary, but it will be convenient later to choose $c = \frac12$.
    
    This choice also ensures $F(x,\cdot)$ is a contraction. To show this, we first estimate the deviation of $D_y R(x,y)$ from $A$ in an appropriate norm. The most convenient norm turns out to be the $|\cdot|_\infty$-norm and the matrix norm derived from it, both of which we will simply denote by $|\cdot|$ in this proof.
    \begin{align*}
        |A^{-1}(A - D_y R(x,y))| &= \left| \int_0^1 A^{-1} \left( \partial_{x} D_{y} R(s x,s y)) x + \partial_{y_j} D_{y} R(s x,s y)) y_j \right) ds \right| \\
        &= \max_{k=1,\ldots,n} \sum_{\ell=1}^m \left| \int_0^1 A^{-1} \left( \partial_{x} \partial_{y_\ell} R_k (s x,s y)) x + \partial_{y_j} \partial_{y_\ell} R_k(s x,s y)) y_j \right) ds \right| \\
        &\leq 2 \max_{k}(\mu_k^{-1} E_k) \rho^{-2} \sigma \rho \leq 2 \kappa \sigma \leq c.
    \end{align*}
     In the following calculation, $y_t$ is shorthand for $y_0 + t(y_1 - y_0)$:
    \begin{align*}
        |F(x,y_1) - F(x,y_0)|  &= |y_1 - y_0 - A^{-1} (R(x,y_1) - R(x,y_0))| \\
                &= \left|  \left(\int_0^1 A^{-1} (A - D_y R(x,y_t)) dt\right) (y_1-y_0) \right| \\
                &\leq c |y_1 - y_0|_\infty.
    \end{align*}
    Thus, $F(x,\cdot)$ is indeed a contraction on $\{x\} \times (-\sigma\rho,\sigma\rho)^n$. Hence, the implicit function theory provides $g: (-\sigma\rho,\sigma\rho) \to (-\sigma\rho,\sigma\rho)^n$ with $R(x,g(x)) = 0$.
    
    To estimate the derivatives of the function $g$ effectively, we will proceed as follows. Let
    \begin{align*}
        B(x,y) = (\mathds 1 - A^{-1}(A - D_y R(x,y)))^{-1}.
    \end{align*}
    If $|A^{-1}(A - D_y R))^{-1}|_{\infty \to \infty} < \frac 12$, then $|B|_{\infty \to \infty} < 2$. Note that
    \begin{align*}
        (D_y R)^{-1} = (\mathds 1 - A^{-1}(A - D_y R))^{-1} A^{-1} = B A^{-1}.
    \end{align*}
    Denote $\max(\mu_k^{-1} E_k) \rho^{-1}$ by $\kappa$. Necessarily, $\kappa \geq 1$.
    The chain rule yields $\partial_x g = - (D_y R)^{-1} \partial_x R$. Since $\partial_x R(0,0) = 0$, we can estimate $|A^{-1} \partial_x R| \leq 2 \rho^{-2} \max(\mu_k^{-1} E_k) \max(|x|,|y|)$, and because $\max(|x|,|y|) \leq \sigma \rho$, we find that $|A^{-1} \partial_x R| \leq 2 \sigma \kappa$. Since we chose $\sigma \leq \frac c2 \kappa^{-1}$ and $c \leq \frac 12$, we get $|A^{-1} \partial_x R| \leq c$, and hence $|\partial_x g| \leq |B|\cdot|A^{-1} \partial_x R| \leq 2c \leq 1$.
    Next,
    \begin{align*}
        \partial_x^2 g &= - (D_y R)^{-1} \left( \partial_x^2 R  +  2 \partial_x D_y R \, \partial_x g + D_y^2 R \, \partial_x g \, \partial_x g\right) \\
        |\partial_x^2 g| &\leq |B| \cdot |A^{-1}(2 (\partial_x \partial_{y_{i}} R) \partial_{x} g_i + (\partial_{y_i} \partial_{y_{j}} R) \partial_x g_i \partial_x g_{j} + \partial_x^2 R)| \\
        &\leq 2 \cdot \frac{2}{\rho^2} \cdot \max_{k=1,\ldots,n} \left(\mu_k^{-1} \sum_{|\alpha| = 2} \frac{\rho^2}{\alpha!} |\partial^\alpha R^k| \right) \leq 4 \rho^{-2} \max(\mu_k^{-1} E_k) \leq 4 \kappa \rho^{-1}
    \end{align*}
    Finally,
    \begin{align*}
        \partial_x^3 g = - (D_y R)^{-1} &\big( \partial_x^3 R  +  3 \partial_x^2 D_y R \, \partial_x g +  3 \partial_x D_y^2 R \, \partial_x g \, \partial_x g + D_y^3 R \, \partial_x g \, \partial_x g \, \partial_x g \\
        &+ 3 \partial_x D_y R \, \partial_x^2 g + 3 D_y^2 R \, \partial_x^2 g \, \partial_x g \big)
    \end{align*}
    which implies
    \begin{align*}
        |\partial_x^3 g| &\leq 2 \cdot \frac{6}{\rho^3}  \cdot \max_{k=1,\ldots,n} \left( \mu_k^{-1} \sum_{|\alpha| \leq 3} \frac{\rho^{|\alpha|}}{\alpha!} \partial^\alpha R_k \right) \cdot 4 \kappa \leq 48 \kappa^2 \rho^{-2}.
    \end{align*}
    Now, 
    \begin{align*}
        \partial_x( F \circ (x,g(x))) = (&\partial_x F + \partial_{y_j} F \partial_x g_j) \circ (x,g(x))\\
        \partial_x^2( F \circ (x,g(x))) = (&\partial_x^2 F + 2 \partial_x \partial_{y_j} F \partial_x g_j + \partial_{y_k} \partial_{y_j} F \partial_x g_j \partial_x g_k + \partial_{y_j} F \partial_x^2 g_j)  \circ (x,g(x))\\
        \partial_x^3( F \circ (x,g(x))) = (&\partial_x^3 F + 3 \partial_x^2 \partial_{y_j} F \partial_x g_j + 3 \partial_x \partial_{y_k} \partial_{y_j} F \partial_x g_j \partial_x g_k
        + \partial_{y_\ell} \partial_{y_k} \partial_{y_j} F \partial_x g_\ell \partial_x g_j \partial_x g_k \\
        &+ 3 \partial_x \partial_{y_j} F \partial_x^2 g_j + 3 \partial_{y_k} \partial_{y_j} F \partial_x^2 g_j \partial_x g_k + \partial_{y_j} F \partial_x^3 g_j) \circ (x,g(x)).
    \end{align*}
    Counting combinatorial coefficients and plugging in the estimates on $|\partial_x g|$, $|\partial_x^2 g|$, $|\partial_x^3 g|$ yields
    \begin{align*}
         |\partial_x^3 ( F \circ (x,g(x)))| \leq \frac6{\rho^3} \cdot \left( \sum_{|\alpha| \leq 3} \frac{\rho^{|\alpha|}}{\alpha!} |\partial^\alpha F| \right) \cdot 8 \kappa^2
    \end{align*}

    Next, we specialize the preceding computations to the situation at hand, where $n=2$, the numbers $E_1$ and $E_2$ are bounded above by \eqref{eq:bound_on_eigenvalue_derivatives}--\eqref{eq:bound_on_SP_derivatives} and the numbers $\mu_1$ and $\mu_2$ are given by \eqref{eq:derivative_expressions1}--\eqref{eq:derivative_expressions4}. From \eqref{eq:bound_on_eigenvalue_derivatives}--\eqref{eq:bound_on_Helmholtz_derivatives} we obtain $E_1 \leq 2\tau$ and $E_2 \leq 4\|J_N(\sqrt{\lambda_0} (\cdot) )\|_{L^2(\mathbb D')}^{-1}$, which can be further reduced to $E_2 \leq 4\sqrt{2} \lambda_0^{\frac14}$ by \Cref{lem:bound_on_L2norms}. From \eqref{eq:derivative_expressions1}--\eqref{eq:derivative_expressions5} we obtain $\mu_1 = 6 \lambda_0^2$ and 
    \begin{align*}
        \mu_2 = \left| \frac{\sqrt{\lambda_0}}{\sqrt{\pi}} J_0(\sqrt{\lambda_0})^{-1} \left( J_0(\sqrt{\lambda_0})^{-1} J_0'(\sqrt{\lambda_0}) - I_0(\sqrt{\lambda_0})^{-1} I_0'(\sqrt{\lambda_0}) \right)^{-1} \right|
    \end{align*}
    By classical bounds on Bessel functions, $J_0(\sqrt{\lambda_0}) \leq \sqrt{\frac{2}{\pi}}\lambda_0^{-\frac14}$, and by \Cref{lem:nondegenerate_eigenvalues}, $|W_0(\sqrt{\lambda_0})| \leq 6$. Thus, we obtain
    $\mu_2 \geq \frac{1}{6\sqrt{2}} \lambda_0^{\frac34}$. Thus,
    \begin{align*}
        \rho \min\left(\mu_1 E_1^{-1}, \mu_2 E_2^{-1}\right) &\geq 2\cdot 10^{-7} N^{-2} \frac{\tau}{\lambda_0^2} \min\left(\frac{6\lambda_0^2}{2\tau}, \frac{\lambda_0^{\frac12}}{48}\right) \\
        &\geq 2\cdot 10^{-7} N^{-2} \min\left(3, \frac{\tau}{48\lambda_0^{\frac32}}\right) \\
        &\geq 2 \cdot 10^{-7} N^{-2} \frac{4N^3}{48 \cdot (N+3N^{\frac13})^3} \geq 10^{-8} N^{-2}.
    \end{align*}

    Plugging in the estimates on $\rho, \kappa$ and on $\sum_{|\alpha| \leq 3} \frac{\rho^\alpha}{\alpha!} |\partial^\alpha F|$ yields
    \begin{align*}
        |\partial_t^3 ( F \circ (t,y(t)))| &\leq 192 \kappa^2 \rho^{-3} \| I_0(\sqrt{\lambda_0}(\cdot))\|_{L^2(\mathbb D')}^{-1} \leq 192 \cdot 10^{40} \cdot N^{13} \, \| I_0(\sqrt{\lambda_0}(\cdot))\|_{L^2(\mathbb D')}^{-1}.
    \end{align*}
    By \Cref{lem:bound_on_L2norms}, this bound can be simplified to $192 \cdot 10^{40} N^{13} \cdot 0.38^{-1} \lambda_0^{\frac14} I_0(\sqrt{\lambda_0})^{-1}$.
    Assuming $N \geq 100$ and using \Cref{lem:nondegenerate_eigenvalues} to estimate $\lambda_0^{\frac12} \leq N(1 + 3 N^{-2/3})$, we find that $|\partial_t^3 ( F \circ (t,y(t)))| \leq 600 \cdot 10^{40} N^{13.5}$ as claimed.
\end{proof}

In the subsequent section, only the one-parameter family $\tilde u_s$ and the derivative estimates obtained in \Cref{lem:oneParameterFamily} will be relevant. Therefore, shall denote $(\tilde u_s)_{s \in (-\varepsilon,\varepsilon)}$ by $(u_t)_{t \in (-\varepsilon,\varepsilon)}$, since there is no danger of confusing this one-parameter family with the auxiliary three-parameter family considered in this section.

\section{Proof of Theorem 1: Conclusion}
\label{sec:final}

Here we find the perturbation radius that guarantees there is no significant contribution from third derivatives and show there is also no significant contribution from higher terms in the Bessel series, thus completing the proof of \Cref{thm:main}.

\begin{proof}[Proof of \Cref{thm:main}]
Consider the one-parameter family of domains $D_t$ and $L^2$-normalized eigenfunctions $u_t$ constructed in \Cref{lem:oneParameterFamily}.
Since the diffeomorphisms $\Phi_t$ reduce to the identity on $\mathbb D' = \mathbb D_{1-\frac1N}$, each domain $D_t$ contains $\mathbb D'$. 
The Helmholtz- and SP-components $v_t$ and $w_t$ are orthogonal in $L^2(D_t)$ and $\| u_t \|_{L^2(D_t)} = 1$, which in particular implies $\|v_t\|_{L^2(\mathbb D')} \leq 1$ and $\|w_t\|_{L^2(\mathbb D')} \leq 1$. Both $v_t$ and $w_t$ are $D_N$-invariant by construction, which implies their expansions on the disk $\mathbb D'$ into Bessel and modified Bessel functions take the form
\begin{align*}
    v_t(r,\theta) = \sum_{k=0}^N a_{kN}(t) J_{kN}(\sqrt{\lambda} r) \cos(N\theta), \ w_t(r,\theta) = \sum_{k=0}^N b_{kN}(t) I_{kN}(\sqrt{\lambda} r) \cos(N\theta),
\end{align*}
respectively. Finally, $v_t(0) = 0$ by construction, hence $a_{0} = 0$. Thus, \Cref{lem:remainder_bounds} yields 
\begin{align*}
    |v_t(r,\theta) - a_N(t) J_N(\sqrt{\lambda} r) \cos(N \theta)| &\leq 10 N \exp\left(2N \rho\left(\frac{r}{1-\frac1N}\right)\right) \left( 1 - \exp\left(N \rho\left(\frac{r}{1-\frac1N}\right)\right)\right)^{-2} \\
    |w_t(r,\theta) - b_0(t) I_0(\sqrt{\lambda} r)| &\leq 2\sqrt{N} \left( \frac{r}{1-\frac1N} \right)^N \left( 1 - \left( \frac{r}{1-\frac1N} \right)^N \right)^{-2}
\end{align*}
If $N \geq 100$ and $r < \frac12$, it is easy to see these remainder terms are vanishingly small compared to the corresponding main terms. With some straightforward work, we obtain the following explicit bounds:
\begin{align*}
    |v_t(r,\theta) - a_N(t) J_N(\sqrt{\lambda} r) \cos(N \theta)| &\leq 10^{-16} \exp\left(N \varrho\left(\frac{\sqrt{\lambda}}{N} r\right) \right) \\
    |w_t(r,\theta) - b_0(t) I_0(\sqrt{\lambda} r)| &\leq 10^{-9} \exp\left(N \varrho\left(\frac{\sqrt{\lambda}}{N} r\right) \right)
\end{align*}
Combining the expression \Cref{lem:oneParameterFamily}, we obtain 
\begin{align}
    \label{eq:lower_bound_on_w0}
    w_t(0) &\geq \frac{t^2}{2} \frac{\lambda}{\sqrt{\pi}} I_0(\sqrt{\lambda})^{-1} - \frac{t^3}{6} \cdot 600 \cdot 10^{40} N^{13.5} I_0(\sqrt{\lambda})^{-1} \\
    &\geq \frac{t^2}{6} N^2 I_0(\sqrt{\lambda})^{-1} \left( \frac3{\sqrt{\pi}} - 600 \cdot 10^{40} N^{11.5} t \right).
\end{align}
Choosing $t \leq 10^{-43} N^{-11.5}$ is thus sufficient to ensure $w_t(0) \geq \frac{t^2}{6} N^2 I_0(\sqrt{\lambda})^{-1}$. In the following, we let $t \leq 10^{-43} N^{-11.5 - K_N}$. The exponent $K_N$ should be thought of as an arbitrarily slowly increasing sequence approaching infinity with $N$. Its purpose is merely to ensure that the perturbed domains $D_t$ approach the round disk in the smooth topology as $N \to \infty$ as claimed in the theorem statement. It turns out that if $K_N$ grows reasonably slowly, it has no significant impact on the radius of the circle free of nodal lines.

We begin by obtaining a lower bound on $w_t(r,\theta)$ by combining \eqref{eq:lower_bound_on_w0} and the bounds \Cref{lem:remainder_bounds}(4) and \Cref{lem:remainder_bounds}(3):
\begin{align*}
    w_t(r,\theta) &\geq \frac{t^2}{6} N^2 e^{\sqrt{\lambda}(r-1)} - 10^{-9} \exp\left(N \varrho\left(\frac{\sqrt{\lambda}}{N} r\right) \right) \\
    &\geq 10^{-86} N^{-21 - 2K_N} e^{\sqrt{\lambda}(r-1)} - 10^{-9} \exp\left(N \varrho\left(\frac{\sqrt{\lambda}}{N} r\right) \right).
\end{align*}
On the other hand,
\begin{align*}
    |v_t(r,\theta)| &\leq (5N+10^{-16}) \exp\left(N \varrho\left(\frac{\sqrt{\lambda}}{N} r\right) \right)
\end{align*}
Thus, $|w_t(r,\theta)| > |v_t(r,\theta)|$ (and thus $u_t(r,\theta) > 0$) if 
\begin{align}
    \label{eq:condition_for_positivity}
    \varrho\left(\frac{\sqrt{\lambda}}{N} r\right) < \frac{\sqrt{\lambda}}{N}(r-1) - \frac1N \left( 87 \log 10 + (22 + 2K_N) \log N \right)
\end{align}
Let $\zeta_\infty = 0.443\dots$ denote the unique zero of $s \mapsto (\varrho(s) - s + 1)$ in $(0,1)$. By an elementary calculation, one may check that with $\sigma = \sqrt{1-\zeta_\infty^2} - \zeta_\infty = 0.452\dots$, one has
\begin{align*}
    \sigma \log\left(\frac{s}{\zeta_\infty}\right) \geq \varrho(s) - s + 1,
\end{align*}
and hence, that \eqref{eq:condition_for_positivity} is satisfied if
\begin{align*}
    \frac{\sqrt{\lambda}}{N} r \leq \zeta_\infty \exp\left(- \frac{1}{\sigma} \left(\frac1N \left( 87 \log 10 - (22 + 2 K_N) \log N \right) \right) \right)
\end{align*}
Since $\frac{\sqrt{\lambda}}{N} < 1 + 3 N^{-\frac23} < \exp(3 N^{-\frac23})$ by \Cref{lem:nondegenerate_eigenvalues}, the above is implied by the stronger, but simpler assumption
\begin{align*}
    r \leq \zeta_\infty \exp\left(- 3 N^{-\frac23} - \frac1\sigma \left(\frac1N \left( 87 \log 10 + (22 + 2K_N) \log N \right) \right)\right).
\end{align*}
We may choose $K_N$ growing slowly enough so $\frac2{\sigma} N^{-1} K_N \log N \leq N^{-\frac23}$. Then, the above is implied by the condition 
$$r < \zeta_\infty \exp(-4 N^{-\frac23} - (500 + 50 \log N) N^{-1}),
$$ 
which is therefore a lower bound for the radius of the circle free of nodal lines. 
\end{proof}

\section{Proof of sharpness}
\label{sec:sharpness}

To show the radius $r_\infty$ of the nodal void in \Cref{thm:main} is asymptotically optimal, we need the following lemma, which provides the sharp rate of decay for the frequency of solutions to the Helmholtz equation.

\begin{lemma}[Decio--Malinnikova--Nazarov]
    \label{lem:DMN}
Let $\varepsilon > 0$. Suppose $u \in C^2(\mathbb D)$ is a solution to $\Delta u + \lambda V u = 0$, where $\|V - 1 \|_{C^1(\mathbb D)} < \varepsilon$. Suppose $u$ vanishes to order $k \geq \sqrt{\lambda(1+\varepsilon)}$ at the origin. Denote $I(r) = \int_{B(x,r)} |u|^2$. Then for all $r \in (0,1)$,
\begin{align*}
    \left( \frac{r I'(r)}{I(r)} \right)^2 \geq \sqrt{4(k + 1)^2 - 4\lambda(1+3\varepsilon) r^2}.
\end{align*}
\end{lemma}

Its proof is provided in Appendix~\ref{sec:DMN}. This lemma implies sharp bounds on the rate of decay, in terms of distance to the boundary, of solutions to the Helmholtz equation on a roughly circular domain $D \subseteq \mathbb R^2$ that are not too concentrated on the boundary.

\begin{lemma}
    \label{lem:lowerbound}
There exists $C > 0$ such that for any $\varepsilon \in (0,\frac1C)$, $\sqrt{\lambda} > C \frac{|\log\varepsilon|}{\varepsilon}$, the following holds: Let $\Phi: \mathbb D \to \mathbb R^2$ with $\lVert \Phi - \mathrm{id} \rVert_{C^3(\mathbb D)} < \varepsilon$. Denote $D = \Phi(\mathbb D)$. Let $v \in C^2(\overline D)$ be a solution to $\Delta v + \lambda v = 0$ on $D$ and assume $\int_{\partial D} |v|^2 < 10$ and $\int_{D} |v|^2 > \frac1{10}$. Then
\begin{align*}
    \int_{\mathbb D_r} |v|^2 > \exp(2 \sqrt{\lambda}\varrho\left(r/(1+C\varepsilon)\right))
\end{align*}
holds for any $r \in (0,1-\frac{1}{100})$.
\end{lemma}

\begin{proof}
    In the following, purely for the sake of brevity, $C$ is an absolute constant that may be replaced by a larger absolute constant each time it occurs. Let $k = \lceil (1+C\varepsilon)\sqrt{\lambda} \rceil$ and write $v = v_{<} + v_{\geq}$, with 
    \begin{align}
    \label{eq:low_components}
        v_{<}(r,\theta) = \sum_{\ell=0}^n a_\ell J_\ell(\sqrt{\lambda} r) \cos(\ell \theta - \eta_\ell) 
    \end{align}
    the unique Bessel series agreeing with $v$ up to $(k-1)^{st}$ order and $v_{\geq} = v - v_{<}$.

    It follows from $L^2$-orthogonality on $\mathbb D_r$ of the summands in \eqref{eq:low_components}, the bounds of \Cref{lem:bounds} on the growth of individual Bessel functions and an application of the triangle inequality that
    \begin{align*}
        |v_<(s,\theta)| < \exp(- \sqrt{\lambda} \varrho(r/(1+C\varepsilon)) + C \log \lambda) \| v_{<} \|_{L^2(\mathbb D_r)}.
    \end{align*}
    for any $s \in (1-C\varepsilon,1+C\varepsilon)$. Since the derivative of $\varrho$ is bounded below on $(0,1-\frac1{100})$, and by assumption $\frac{\varepsilon}{C}\sqrt{\lambda} \geq C \log \lambda$, we may absorb $C\log \lambda$ by enlarging $C$, obtaining
    \begin{align*}
        |v_<(s,\theta)| < \exp(- \sqrt{\lambda} \varrho(r/(1+C\varepsilon))) \| v_{<} \|_{L^2(\mathbb D_r)}.
    \end{align*}
    
    Now assume for the sake of contradiction that, for some fixed $C' > C$, we have
    \begin{align*}
        \int_{\mathbb D_r} |v_<|^2 < \exp(2 \sqrt{\lambda}\varrho\left(r/(1+C'\varepsilon)r\right)).
    \end{align*}
    Then $\int_{\partial D} |v_<|^2 < \exp(- \delta \sqrt{\lambda} )$ and $\int_{D} |v_<|^2 < \exp(- \delta \sqrt{\lambda} )$. Note that $(v,w)_{L^2(D)} = 0$ and $\|v\|_{L^2(D)} \geq \|w\|_{L^2(D)}$ and hence $\int_{D} |v|^2 > \frac{1}{20}$. Thus, $\int_{\partial D} |v_\geq|^2 < 11$ and $\int_{D} |v_\geq|^2 > \frac{1}{21}$.

    It follows from the proof of the classical Kellogg--Warschawski theorem on boundary regularity of the Riemann mapping that there exists a conformal map $\phi: \mathbb D \to D$ with $\phi(0) = 0$ and $\|\phi - \mathrm{id}\|_{C^{2}(\mathbb D)} \leq C \|\Phi - \mathrm{id}\|_{C^{3}(\mathbb D)}$ for absolute constant $C > 0$ (the seeming loss of regularity arises purely out of convenience, in fact $\|\phi - \mathrm{id}\|_{C^{2,\alpha}(\mathbb D)} \leq C \|\Phi - \mathrm{id}\|_{C^{2,\alpha}(\mathbb D)}$ for any $\alpha \in (0,1)$ and an $\alpha$-dependent $C > 0$). Let $\tilde v_{\geq} = \phi^\ast v_{\geq}$. We have $\Delta \tilde v_{\geq} + \lambda V u = 0$, with $V = |\partial_z \phi|^2$ satisfying $\|V - 1\|_{C^1(\mathbb D)} < C \varepsilon$. Furthermore, $\int_{\partial \mathbb D} |\tilde v_\geq|^2 < 12$ and $\int_{\mathbb D} |\tilde v_\geq|^2 > \frac{1}{22}$. Let $I(r) = \int_{\mathbb D_r} |\tilde v_{\geq}|^2$ as in \Cref{lem:DMN}. Then
    \begin{align*}
        12 \cdot 22 \geq \left( \frac{I'(1)}{I(1)} \right)^2 \geq \sqrt{4(k + 1)^2 - 4\lambda(1+3\varepsilon)},
    \end{align*}
    and since $\sqrt{\lambda} \geq \frac{C}{\varepsilon}$, it follows that $v_{\geq}$ vanishes to order $k \leq \sqrt{(1+4 \varepsilon)\lambda}$ at the origin, a contradiction.
\end{proof}

\begin{proof}[Proof of \Cref{thm:sharpness}]
    As in \Cref{sec:proof}, we write $u = w - v$, where $v = \frac{1}{2\lambda}(\Delta u - \lambda u)$ and $w = \frac{1}{2\lambda}(\Delta u + \lambda u)$. Since $\|u\|_{L^2(D)}^2 = 1$,
    \begin{align*}
        \int_{D} vw &= \frac{1}{4\lambda^2} \int_{D} (|\Delta u|^2 - \lambda^2 u^2) = 0, \\
        \int_{D} (w-v)(w+v) &= \frac{1}{\lambda} \int_{D} u \Delta u = - \frac{1}{\lambda} \int_{D} |\nabla u|^2 < 0,
    \end{align*}
    we conclude that $1 \geq \|v\|_{L^2(D)}^2 \geq \frac12$. Let $X = r \partial_r$, let $\phi_t$ denote the flow generated by $X$, write $D_t = \phi_t(D)$ and consider the corresponding eigenvalue and eigenfunction $\lambda_t$ and $u_t$, respectively, as given by Rellich's one-parameter perturbation theorem.
    By the scaling invariance of $\Delta^2$, $\lambda_t = e^{-2 t} \lambda$, so $\dot \lambda = - 2 \lambda$. On the other hand, the eigenvalue variational formula yields
    \begin{align*}
        2 \lambda \dot \lambda = - \int_{\partial D} (X \cdot n) |\Delta u|^2 = - 4 \lambda^2 \int_{\partial D} |v|^2,
    \end{align*}
    hence $1 = \int_{\partial D} (X \cdot n)|v|^2$, which implies $1 - C \varepsilon < \int_{\partial D} |v|^2 < 1 + C \varepsilon$, since $\| \Phi - \mathrm{id} \|_{C^1} < \varepsilon$ and hence $1 - C \varepsilon < (X \cdot n) < 1 + C \varepsilon$. \footnote{This argument extends to any star-shape domain $D$. In an upcoming article, Kuperman, Lin, Logunov and Mangoubi will demonstrate a similar bound holds indeed for any smoothly bounded domain $D$ \cite{Zhengjiang}.}
    
    Thus, \Cref{lem:lowerbound} applies to $v$ and yields 
    \begin{align*}
        \int_{\mathbb D_r} |v|^2 > \exp(2 \sqrt{\lambda}\varrho\left(r/(1+C\varepsilon)\right))
    \end{align*}
    for any $r \in (0,1)$. On the other hand, the function $\tilde w = w/I_0(\sqrt{\lambda}|\cdot|)$ satisfies the maximum principle, and since $\|w\|_{L^2(\mathbb D_{1-C\varepsilon})}^2 < 1$ and $I_0(\sqrt{\lambda} r) \leq e^{r/(1-C\varepsilon)}$, it follows that
\begin{align*}
    |w(r,\theta)| \leq \exp(-\sqrt{\lambda} (1-r/(1-C\varepsilon))).
\end{align*}

Basic elliptic theory yields the reverse Hölder inequality $\|v\|_{L^2(D_{s + \varepsilon})} \leq C \lambda^{\frac12} \|v\|_{L^1(D_s)}$. Indeed, Caccioppoli's inequality yields $\|v\|_{H^2(D)}^2 \lesssim \max(\varepsilon^{-2},\lambda) \|v\|_{L^2(D)}^2$. By assumption, $\sqrt{\lambda} \geq \varepsilon^{-1}$. Now the Sobolev embedding theorem $\|v\|_{L^4(D)} \lesssim \|v\|_{H^1(D)}$ and an application of Cauchy's inequality yields the claim.

It follows that for any $r \in (\frac13,\frac23)$ we have
\begin{align*}
    \int_{\mathbb D_r} |v| > \exp(\sqrt{\lambda}\varrho\left(r/(1+C \varepsilon)r\right)).
\end{align*}
Now choose $r \in (r_\infty, (1+C\varepsilon) r_\infty)$ such that $J_0'(\sqrt{\lambda} r) = 0$ -- since such values of $r$ are spaced at distance approximately $\pi/\sqrt{\lambda}$, this is possible. Then $v$ has zero average over $\mathbb D_r$, as indeed does any solution to the Helmholtz equation on $\mathbb D_r$. As a consequence,
\begin{align*}
    \int_{\mathbb D_r} |\max(-v,0)|, \int_{\mathbb D_r} |\max(v,0)| > \frac12\exp(\sqrt{\lambda}\varrho\left(r/(1+C \varepsilon)r\right)).
\end{align*}

It follows that $\max v \geq \exp(\sqrt{\lambda}\varrho\left(r/(1+C \varepsilon)r\right))$ and $|\min v| \geq \exp(\sqrt{\lambda}\varrho\left(r/(1+C \varepsilon)r\right))$, so $u = w-v$ changes sign on $\mathbb D_r$ if $\varrho\left(r/(1+C\varepsilon)r\right) > - (1-C\varepsilon - r)$. Since $\varrho(r)+1-r$ has a simple zero with nonvanishing derivative at $r = r_\infty$, this holds if $r \geq (1 + C \varepsilon)r_\infty$.
\end{proof}

\appendix

\section{Proof of \Cref{lem:DMN}}
\label{sec:DMN}

The authors are grateful to Eugenia Malinnikova for communicating the following proof of \Cref{lem:DMN}, which is reproduced here for completeness' sake.

\begin{proof}[Proof of \Cref{lem:DMN}]
    This proof works in the $d$-dimensional unit ball $\mathbb B^d$, and so shall be presented in this generality. We begin by denoting
\begin{align}
        I(r) &= \int_{B_r} |u|^2, & H(r) &= \int_{\partial B_r} |u|^2, \\
        E(r) &= \int_{\partial B_r} u \partial_n u, & J(r) &= \int_{B_r} V |u|^2
\end{align}
By the divergence theorem, $E(r) = \int_{B_r} (|\nabla u|^2 - \lambda V |u|^2)$. Of course,
\begin{align*}
    I'(r) &= H(r), \\
    H'(r) &= \frac{d-1}{r} H(r) + 2 E(r).
\end{align*}
Now we estimate $E'(r)$. We decompose $E'(r) = \int_{\partial B_r} |\nabla u|^2 - \lambda \int_{\partial B_r} V |u|^2$ into its two constituents and estimate them separately. Write $X = r \partial_r$. Pohozaev's identity for this vector field reads
\begin{align*}
    &\int_{\partial B_r} (X \cdot \nabla u) \partial_n u = \int_{B_r} \divg((X \cdot \nabla u) \nabla u) \\
    &\ = \int_{B_r} (X \cdot \nabla u) \Delta u + \int_{B_r} DX(\nabla u, \nabla u) + \frac12 \int_{B_r} X \cdot \nabla( |\nabla u|^2 ) \\
    &\ = \int_{B_r} (X \cdot \nabla u) \Delta u + \int_{B_r} DX(\nabla u, \nabla u) + \frac12 \int_{\partial B_r} (X \cdot n) |\nabla u|^2 - \frac12 \int_{B_r} \divg X |\nabla u|^2
\end{align*}
Now, since $(X \cdot n) = r$, $\divg X = d$ and $DX = \mathds{1}$, we find 
\begin{align*}
    \int_{\partial B_r} |\nabla u|^2 = 2 \int_{\partial B_r} |\partial_n u|^2 + \frac{d-2}{r} \int_{B_r} |\nabla u|^2 + \frac{2\lambda}{r} \int_{B_r} Vu (X \cdot \nabla u)
\end{align*}
We use the divergence theorem again to obtain
\begin{align*}
    &\int_{\partial B_r} (X \cdot n) V |u|^2 = \int_{B_r} (\divg X)  V |u|^2 + \int_{B_r} (X \cdot \nabla V) |u|^2 + 2 \int_{B_r} V u (X \cdot \nabla u),
\end{align*}
and hence,
\begin{align*}
    \lambda \int_{\partial B_r} V |u|^2 = \frac{\lambda d}{r} \int_{B_r} V |u|^2 + \frac{2\lambda }{r} \int_{B_r} V u (X \cdot \nabla u) + \varepsilon \frac{\lambda}{r} \int_{B_r} |u|^2.
\end{align*}
The term $\int_{B_r} V u (X \cdot \nabla u)$ cancels and we find 
\begin{align*}
    E'(r) &= 2 \int_{\partial B_r} |\partial_n u|^2 + \frac{d-2}{r} E(r) - \frac{2\lambda}{r} J(r) - \varepsilon \frac{\lambda}{r} I(r) \\
    &\geq \frac{2 E(r)^2}{H(r)} + \frac{d-2}{r} E(r) - (2+3\varepsilon) \frac{\lambda}{r} I(r).
\end{align*}
Next, we introduce the appropriate equivalent of the frequency function for this problem,
\begin{align*}
    N(r) = \frac{r E(r)}{H(r)}.
\end{align*}
It satisfies the differential inequality
\begin{align*}
    N'(r) &= N(r) \left( \frac{1}{r} + \frac{E'(r)}{E(r)} - \frac{H'(r)}{H(r)}\right) \\
    &\geq N(r) \left( \frac1r + 2 \frac{E(r)}{H(r)} + \frac{d-2}{r} - (2 + 3\varepsilon) \frac{\lambda}{r} \frac{I(r)}{E(r)} - \frac{d-1}{r} - 2 \frac{E(r)}{H(r)}\right) \\
    &\geq - N(r) (2 + 3\varepsilon) \frac{\lambda I(r) }{r E(r)} = - (2 + 3\varepsilon) \lambda \frac{I(r)}{H(r)}
\end{align*}
Write $\Phi(r) = \frac{I(r)}{H(r)}$. Then
\begin{align*}
    \Phi'(r) = 1 - \frac{H'(r)}{H(r)} \Phi(r) = 1 - \frac{1}{r} \left( d - 1 + 2 N(r) \right) \Phi(r).
\end{align*}
These two inequalities suffice to sharply control the decay of $N(r)$ as $r$ increases, as the following lemma shows.

\begin{lemma}
    \label{lem:DMN_diffineq}
    Let $0 < r_1 < r_2$. Let $F,G \in C^1([r_1,r_2])$ and $b \in C^0([r_1,r_2])$ be positive functions. Assume 
    \begin{align*}
        F'(r) &\leq 1-\frac1r F(r)G(r) \\
        G'(r) &\geq -b(r) F(r)
    \end{align*}
    for all $r \in [r_1,r_2]$ and suppose that $F(r_1)G(r_1) < r_1$. Then 
    \begin{align*}
        G(r)^2 \geq G(0)^2 - 2 \int_{r_1}^r \rho b(\rho) d\rho.
    \end{align*}
    for all $r \in [r_1,r_2]$.
\end{lemma}

\begin{proof}
    Let $\hat G(r) = \min_{r' \in (0,r)} G(r')$. We first claim that $F(r)\hat G(r) \leq r$. If not, there would be a smallest value $r^\ast$ for which $F(r^\ast)\hat G(r^\ast) = r^\ast$. But since $\hat G$ is non-increasing, this would imply $F'(r^\ast) \geq r^\ast / \hat G(r^\ast)$, which contradicts $F'(r^\ast) \leq 1 -\frac1{r^\ast} F(r^\ast)G(r^\ast) = 0$.

    Note that $\hat G'(r)$ (defined to be lower semicontinuous) is either $0$ or $G'(r)$, and hence, $\hat G'(r) \geq - b(r) F(r)$. It follows that 
    \begin{align*}
        \left(\tfrac12 \hat G(r)\right)' \geq - r b(r),
    \end{align*}
    and thus $\hat G(r)^2 \geq G(0)^2 - 2\int_0^r \rho b(\rho) d\rho$ as claimed.
\end{proof}

Suppose now that $u$ vanishes to order $k \geq \sqrt{\lambda(1+3\varepsilon)}$ at the origin. As $r \to 0$, $\Phi(r) \to \frac{r}{2k + d}$, while $N(r) \to k$, hence $2N(r) + d - 1 \to 2k + d - 1$. Thus, the hypotheses of \Cref{lem:DMN_diffineq} are satisfied, and
\begin{align*}
    \frac{r H(r)}{I(r)} \geq \sqrt{(2k + d - 1)^2 - 4 \lambda (1 + 3\varepsilon) r^2 },
\end{align*}
proving \Cref{lem:DMN}. \end{proof}

\section*{Acknowledgements}

This work has received funding from the European Research Council (ERC) under the European Union's Horizon 2020 research and innovation programme through the grant agreement~862342 (A.E.). It is also partially supported by the MCIN/AEI grants CEX2023-001347-S, RED2022-134301-T and PID2022-136795NB-I00 (A.E.). The authors thank Iosif Polterovich for an informative discussion on the background of the problem of interest. The second author is grateful to Eugenia Malinnikova for her untiring support and expert guidance, as well as many helpful comments on this paper.

\end{document}